\theoremstyle{plain}
\numberwithin{equation}{section}
\newtheorem{theorem}{Theorem}[section]
\newtheorem{lemma}[theorem]{Lemma}
\newtheorem{corollary}[theorem]{Corollary}
\newtheorem{remark}[theorem]{Remark}
\newtheorem{example}[theorem]{Example}
\newtheorem{definition}[theorem]{Definition}
\newtheorem{hypothesis}[theorem]{Hypothesis}
\newcommand{\e}{\varepsilon}
\newcommand{\cQ}{\mathcal{Q}}
\newcommand{\bN}{\mathbb{N}}
\newcommand{\bR}{\mathbb{R}}
\newcommand{\cW}{\mathcal{W}_p}
\newcommand{\qQ}{\mathcal{Q}}
\newcommand{\ud}{\mathsf{d}}
\newcommand{\ii}{\mathsf{i}}
\begin{document}
{
\begin{tikzpicture}[remember picture, overlay]
\node[anchor=north west, xshift=1.5cm, yshift=-1.8cm] at (current page.north west){
\includegraphics[width=4cm]{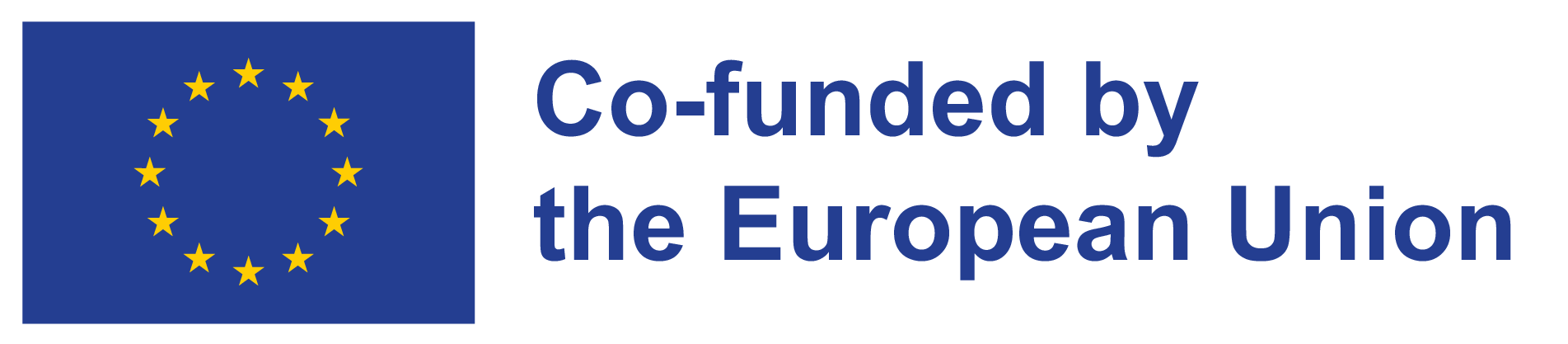} 
};
\end{tikzpicture}
}

\title[Ergodicity bounds for stable AR and ARMA processes]{Ergodicity bounds in the Sliced Wasserstein distance for Schur stable autoregressive  processes}

\author{Gerardo Barrera~\orcidlink{0000-0002-8012-2600}}
\address{
Center for Mathematical Analysis, Geometry and Dynamical Systems (CAMGSD), Mathematics Department, Instituto Superior T\'ecnico, Universidade de Lisboa, Lisboa, Portugal.
}
\email{gerardo.barrera.vargas@tecnico.ulisboa.pt}
\author{Paulo Henrique da Costa~\orcidlink{0009-0006-0941-9256}}
\address{Departamento de Matem\'atica, Universidade de Bras\'ilia, Bras\'ilia, Brasil. }
\email{phcosta@unb.br}

\author{Michael A. H\"ogele~\orcidlink{0000-0001-5744-0494}}
\address{
Departamento de Matem\'aticas, Universidad de los Andes, Bogot\'a, Colombia.}
\email{ma.hoegele@uniandes.edu.co}
\date{\today}

\subjclass{Primary 12D10, 30C15; Secondary 26C10, 30C10}
\keywords{Autoregressive model; Autoregressive-moving-average model; Empirical means; Non-asymptotic ergodic rates;  Sliced Wasserstein distance; Parallel sampling; Properties of the Wasserstein distance; The cutoff phenomenon}

\begin{abstract}
Explicit calculations in dimension one show for Schur stable autoregressive processes with standard Gaussian noise 
that the ergodic convergence in the Wasserstein-$2$ distance is essentially given by the sum of the mean, which decays exponentially, and the standard deviation, which decays with twice the speed.
This paper starts by showing new upper and lower multivariate affine transport bounds for the Wasserstein-$r$ distance for $r$ greater and equal to $1$. These bounds allow to formulate a novel sufficient (non-Gaussian) ergodic interpolation condition for the mentioned mean-variance behavior to take place in case of more general Schur stable multivariate autoregressive processes. All ergodic estimates are non-asymptotic with completely explicit constants. The main applications are precise thermalization bounds for Schur stable $\mathsf{AR}(p)$ and $\mathsf{ARMA}(p,q)$ models 
in Wasserstein and Sliced Wasserstein distance. 
In the sequel we establish with the help of coupling techniques 
explicit upper and lower exponential bounds for more general multivariate Schur stable autoregressive processes.  This includes parallel sampling and the convergence of the empirical means. The utility of our results in particular for the Sliced Wasserstein distance are confirmed by multivariate numerical experiments. 
\end{abstract}

\maketitle

% \tableofcontents

\section{\textbf{Introduction}}

We start with the motivating example for this article. Consider the autoregressive model of order one 
\begin{equation}\label{eq:au1}
X_{t}(x)=q X_{t-1}(x)+\sigma \xi_{t},\quad X_0(x)=x\in \mathbb{R},\quad t\in \mathbb{N}:=\{1,2,\ldots,\},
\end{equation}
where $q\in (-1,1)$, $(\xi_j)_{j\in \mathbb{N}}$ is a sequence of independent and identically distributed (for short i.i.d.) standard Gaussian random variables, and the noise intensity $\sigma\neq 0$.
By iteration, one can see that
\begin{equation}\label{eq:g1}
X_t(x)\stackrel{\mathsf{d}}{=}\mathcal{N}\left(q^t x, \sigma^2 \sum_{j=0}^{t-1}q^{2j}\right)
=\mathcal{N}\left(q^t x, \sigma^2\frac{1-q^{2t}}{1-q^2}\right),\quad t\in \mathbb{N}
\end{equation}
and its limiting law
\begin{equation}\label{eq:g2}
X_\infty
\stackrel{\mathsf{d}}{=}\mathcal{N}\left(0, \sigma^2\frac{1}{1-q^2}\right),
\end{equation}
where $\stackrel{\mathsf{d}}{=}$ denotes equality in law and $\mathcal{N}(m,\eta^2)$ denotes the Gaussian distribution with mean $m\in \mathbb{R}$ and variance $\eta^2>0$.
The Gaussianity in~\eqref{eq:g1} and~\eqref{eq:g2} allows us to have an explicit expression for the so-called Wasserstein distance of order two between the laws of $X_t(x)$ and $X_\infty$, see for instance~\cite{Gelbrich,GivensShortt}. More precisely, 
\begin{align}
\mathcal{W}_2(X_t(x),X_\infty)&=
\sqrt{|q|^{2t}x^2+\frac{\sigma^2}{1-q^2}\left(1-\sqrt{1-q^{2t}}\right)^2}
\\
&=\sqrt{\underbrace{|q|^{2t}x^2}_{\textrm{ mean component }}+\underbrace{\frac{\sigma^2}{1-q^2}\cdot\frac{q^{4t}}{\left(\sqrt{1-q^{2t}}+1\right)^2}}_{\textrm{ noise  
  component }}}.\label{eq:for1W}
\end{align}
The preceding formula exhibits a convergence pattern previously observed in~\cite{BarreraHoegele} for the Ornstein--Uhlenbeck process: the convergence of the means is of order $|q|^t$ while the convergence of the standard deviations of \eqref{eq:g1} to \eqref{eq:g2} in \eqref{eq:for1W} is of order $|q|^{2t}$, that is, for all $t\in \mathbb{N}$
\begin{equation}\label{eq:srsd}
\begin{split}
\left|\sqrt{\sigma^2\frac{1-q^{2t}}{1-q^2}}-\sqrt{\sigma^2\frac{1}{1-q^2}}\right|&=
\frac{\sigma}{\sqrt{1-q^2}}\left|\sqrt{1-q^{2t}}-1\right|\\
&=\frac{\sigma}{\sqrt{1-q^2}}\frac{|q|^{2t}}{\sqrt{1-q^{2t}}+1} = \frac{\sigma}{2 \sqrt{1-q^2}}|q|^{2t} + \mathsf{o}(|q|^{2t}).
\end{split}
\end{equation}
More generally and in higher dimensions one can deduce  similar rates for the convergence of the means, however, 
it is not straightforward as in~\eqref{eq:srsd} to obtain that the convergence of the matrix square root of the covariances is twice the rate of convergence of the means in~\eqref{eq:for1W}. Similar formulas as \eqref{eq:for1W} for the Wasserstein distance of order $r\geq 1$, $r\neq 2$, between Gaussian distributions are only available dimension one. In fact, in Lemma~\ref{lem:loc-scale} for any random variable $X$ with finite $r$-th absolute moment for some $r\geq 1$ we show that
\[
\mathcal{W}_r(X,m+\sigma X)=\left(\mathbb{E}[|m+(\sigma-1)X|^r]\right)^{1/r}.
\]
In other words, the synchronous coupling (also referred to as natural coupling or graph coupling), between the laws of $X$ and $m+\sigma X$, that is, $\omega\mapsto (X(\omega),m+\sigma X(\omega))$, is optimal for any cost function of the form $|\cdot|^r$, $r\geq 1$.
In particular, for any $r\geq 1$, $m_1,m_2\in \mathbb{R}, \sigma^2_1+\sigma^2_2\geq 0$,  we have
\begin{equation}\label{eq:fgr}
\begin{split}
\mathcal{W}_r(\mathcal{N}(m_1,\sigma^2_1),\mathcal{N}(m_2,\sigma^2_2))&=\left(\mathbb{E}\left[\left|(m_2-m_1)+(\sigma_2-\sigma_1)\mathcal{N}(0,1)\right|^r\right]\right)^{1/r}\\
&=\left(\mathbb{E}\left[\left|\mathcal{N}(m_2-m_1,(\sigma_2-\sigma_1)^2)\right|^r\right]\right)^{1/r},
\end{split}
\end{equation}
which seems to be new in the literature for $r\neq 2$. 
The right-hand side of \eqref{eq:fgr} can be computed explicitly for $r\in \mathbb{N}$  while for $r\geq 1$, $r\not\in \mathbb{N}$ can be expressed in terms of the so-called  Kummer's confluent hypergeometric function.
In particular, another example of interest is 
the symmetric $\alpha$-stable distribution $\mathcal{S}_\alpha$ with $\alpha\in (1,2]$ for which we have
\[
\mathcal{W}_r(\mathcal{S}_\alpha,m+\sigma \mathcal{S}_\alpha)=\left(\mathbb{E}[|m+(\sigma-1)\mathcal{S}_\alpha|^r]\right)^{1/r}
\]
for any $1\leq r<\alpha$, where no closed formula, without special functions, is known for $m\neq 0$.

In this manuscript, we quantify the convergence to equilibrium for the multivariate Schur-stable analogue of the autoregressive model~\eqref{eq:au1} for general noises $(\xi_j)_{j\in \mathbb{N}}$ with finite $r$-th absolute moment in the Wasserstein distance of order $r\geq 1$ and the respective Sliced Wasserstein distance of order $r\geq 1$. \\

Autoregressive models are an essential part of data analysis, see for instance \cite{HornJohnson,  proctor2016,tu14}, with abundant applications in time series analysis, statistics, and artificial intelligence \cite{box2015,  halko2011, hamilton1994,kolda2009}  modeling economic \cite{sims1980}, biological \cite{caswell2001}, demographic phenomena \cite{lee1992} among others \cite{uria2016}. \\

In case of Schur-stable interactions their ergodic behavior can be understood as a discretized version of a Hurwitz-stable multivariate Ornstein-Uhlenbeck process, however, with greater freedom in the driving noise distributions. 
We refer to \cite{BarreraHoegele, BarreraHoegelePardoPavlyukevich} for ergodic estimates on those kind of processes and their generalizations with different L\'evy drivers in finite and infinite dimensions. There (see \cite[Theorem~2.6]{BarreraHoegele}) and in \eqref{eq:au1} it is the Gaussianity of the driving noise which yields an \textit{additive} decomposition of the $\mathcal{W}_2$ error with different speeds, given by the mean of $X_t$, which is of order $q^t$ plus an error of higher order, $q^{2t}$ in this case, which represents the ergodic convergence of the stochastic convolution. Such an additive decomposition into mean and higher order rate is wrong in general, see Example~\ref{ex:Bernoulli}, instead we can only expect a single scale, which corresponds to a \textit{multiplicative} error in the sense that the error is proportional to $q^t \sqrt{|x|^2 + C}$, where $C$ is a positive constant depending on the noise distribution. 

This article aims at understanding when such a time scale separation occurs, since it indicates that essentially only the mean behavior is relevant after a logarithmically short time in the size of the noise parameters. The second main goal is to establish bounds for the generic case when this condition is not satisfied. 

The setting of this manuscript generalizes the introductory example from $1$-d stable to arbitrary multivariate situations with Schur-stable interaction matrices, from standard Gaussian perturbations to general distributions with a finite absolute moment $r>1$  (including $\alpha$-stable distributions for $1 < \alpha < 2$) and from the Pythagorean situation of $\mathcal{W}_2$ to general $\mathcal{W}_r$, $r\geq 1$.  \\
In Subsection~\ref{ss:affineergodicinterpolation} we identify a new sufficient condition called "ergodic interpolation" which guaranties that the rates of convergence in the Wasserstein-$r$ and the Sliced Wasserstein-$r$ distance have an additive decomposition into a mean and a higher order ergodic component. 
For $d=1$ this condition is equivalent to a self-similarity condition and is only satisfied for Gaussian drivers and $\alpha$-stable distributions. In addition, we give an example that does not satisfy ergodic interpolation, and by calculating the $\mathcal{W}_2$-error we show that the error does not exhibit an additive but only a multiplicative structure and the notion is not void. In higher dimensions, this notion seems to be non-trivial and is subject to further research.\\
We start with new affine transport estimates for the multivariate Wasserstein-$p$ distance, $p>1$, in Lemma~\ref{prop:affine}.
Combining those with the notion of ergodic interpolation we obtain the first main result in Theorem~\ref{thm:affine}. 
For Gaussian noise vectors we obtain a multivariate Schur-stable extension of the introductory example 
in Theorem~\ref{th:affineGauss}. In a string of corollaries, Corollaries~\ref{cor:projected}, \ref{cor:PWdII}, \ref{cor:sliced} and \ref{cor:aeiistable} extend the preceding theorems to the case of projected Wasserstein distances which can be significantly improved under an additional commutativity condition, the Sliced Wasserstein distance for the Gaussian case and, finally, the Sliced Wasserstein distance for isotropic $\alpha$-stable noise vectors.  \\
In Theorem~\ref{thm:genericARMA}  of Subsection~\ref{ss:genericexponentialbounds} we establish new non-asymptotic upper and lower exponential bounds on the Wasserstein-$p$ distance, $p\geq 1$ for arbitrary Schur-stable autoregessive processes driven by arbitrary random vectors with finite absolute moment $1\leq p \leq r$. Here, the upper and lower bounds are both exponential with the same rate in time, however, with different prefactors. In case of diagonalizable interaction matrix treated in Corollary~\ref{cor:genericARMA} and is generic as a function of the matrix components, these conditions boil down to specific bounds in terms of powers of the leading eigenvalue. Further applications are ergodic bounds for parallel sampling (Lemma~\ref{rem:parallelsampling}) and ergodic estimates for the empirical process of an autoregressive model in Corollary~\ref{cor:empirical}. \\
In Subsection~\ref{ss:matrixpowerprojection} we show the precise asymptotics of Schur stable matrix power projections $\qQ^t x$, $x\neq 0$, for general (complex-valued) square matrices $\qQ$ according to their Jordan normal form, since those naturally appear in the upper and lower bounds of the preceding subsections. In Subsection~\ref{ss:numerical} we illustrate the different results obtained in the previous sections for different matrices $\mathcal{Q}$ and indices $r$ of the respective ergodic Wasserstein-$r$ distance and its Sliced analogue. 

\textbf{Notation:} We denote by $\bN$ the set of positive integers numbers and $\bN_0$ denotes the set of non-negative integer numbers. When not stated otherwise we denote by $|\cdot |$ Euclidean norm in any finite dimensional vector space, which is clear from the context. 
For a random vector $X$ on a given probability space and a distribution on the image space of $X$ 
we shall $X \stackrel{\mathsf{d}}{=} \nu$ for $\mathbb{P}_X = \nu$. In this manuscript, the identity matrix in $\mathbb{R}^{d\times d}$ is always denoted by $I_d$. We denote by $GL(d, \mathbb{C})$ the group of invertible $d\times d$ square matrices (w.r.t. the matrix multiplication) with entries in $\mathbb{C}$.

\section{\textbf{The setup}}
\subsection{\textbf{The autoregressive model}} 
Let $(\xi_t)_{t\in \bN}$ be a sequence of i.i.d. random vectors in $\bR^d$ 
defined in a probability space $(\Omega,\mathcal{F},\mathbb{P})$
with given probability distribution 
$\nu$. 
We consider the multivariate autoregressive model $(X_t(x))_{t\in \mathbb{N}_0}$ given by the strong solution of the linear recurrence equation in $\bR^d$ 
\begin{equation}\label{eq:model}
X_t(x)=\cQ X_{t-1}(x)+\Sigma\xi_t\quad \textrm{ for all }\quad t\in \bN\quad \textrm{ with }\quad X_0(x)=x\in \bR^d,
\end{equation}
where $\Sigma\in \mathbb{R}^{d\times d}$.
We note that \eqref{eq:model} satisfies the pathwise representation
\begin{equation}\label{eq:repret}
X_t(x)=\cQ^t x+ \sum_{j=0}^{t-1} \cQ^j \Sigma\xi_{t-j},	\quad t\in \mathbb{N}, \quad X_0(x) = x,
\end{equation}
and due to i.i.d. noise $(\xi_j)_{j\in \mathbb{N}}$ we also have the weak representation
\begin{equation}\label{eq:reprelimit}
X_t(x)\stackrel{\mathsf{d}}=\cQ^t x+ \sum_{j=0}^{t-1} \cQ^j\Sigma\xi_{j}, \quad t\in \mathbb{N}, \quad X_0(x) = x. 
\end{equation}
The relevance to study the model \eqref{eq:repret} is given by the following processes, which are omnipresent in time series analysis and modeling, see for instance \cite{BroDav91}.

\medskip 

\noindent \textbf{Important classes of examples:}  \\
\noindent \textbf{(1) $\mathsf{AR}(p)$,  the autoregressive model of order $p\in \mathbb{N}$, 
$(Y_t)_{t\in \mathbb{N}_0}$.} For a given the initial string 
$Y_0=y_0,Y_1=y_1,\ldots,Y_{p-1}=y_{p-1}$ it is defined by
\[
Y_t=\sum_{j=1}^{p} \varphi_j Y_{t-j}+\varepsilon_t\quad \textrm{ for }\quad t\in \{p,p+1,\ldots,\},
\]
where $\varphi_1,\ldots,\varphi_p$ are real-valued model parameters and $(\varepsilon_t)_{t\geq p}$ are i.i.d. random variables.
The preceding model is of type of~\eqref{eq:model}. Indeed, 
$d=p$, $X_t=(Y_t,Y_{t-1},\ldots,Y_{t-(d-1)})^{T}\in \mathbb{R}^d$ for $t\in \{d,d+1,\ldots,\}$.
The matrix $\cQ$ is the transpose of a so-called companion matrix~\cite{HornJohnson} 
and given by 
\begin{equation}\label{eq:Qcomp}
\cQ=\begin{pmatrix}
\varphi_1 & \varphi_2 && \ldots & \varphi_d\\
1        & 0         && \ldots & 0\\
0        & \ddots    &\ddots&  & 0\\
\vdots    &           &&        & \vdots\\
0        & 0         &\dots & 1 & 0   
\end{pmatrix},
\end{equation}
$\Sigma=e_1\otimes e_1+\sum_{j=2}^{d}a_j e_j\otimes e_j$ where $\{e_1,\ldots,e_d\}$ is the canonical basis of $\mathbb{R}^d$, $a_2,\ldots,a_d$ are any non-negative real numbers, $\otimes $ denotes the usual Kronecker product, 
and $\xi_t=\varepsilon_t e_1\in \mathbb{R}^d$. This model can be extended by $q$ moving averages as follows. 

\noindent
\textbf{(2) $\mathsf{ARMA}(p,q)$, the autoregressive moving-average model, with $p\in \mathbb{N}$ autoregressive terms and $q\in \mathbb{N}$ moving average noise components.} More precisely,
\begin{equation}\label{d:arma}
Y_t=\sum_{j=1}^{p} \varphi_j Y_{t-j}+\varepsilon_t+\sum_{j=1}^q \theta_j\varepsilon_{t-j}
\quad \textrm{ for }\quad t\in \{p,p+1,\ldots,\},
\end{equation}
with initial string 
$Y_0=y_0,Y_1=y_1,\ldots,Y_{p-1}=y_{p-1}$,  $(\varepsilon_t)_{t\geq p-q}$ are i.i.d. random variables, $q\leq  p$, $\theta_1,\ldots,\theta_q$ are fixed numbers. 
This can be written in an enhanced version of the form of~\eqref{eq:model}
by its state space representation in $\mathbb{R}^{p+q}$ 
\begin{align}
&X_t = 
\left(
\begin{array}{c} 
Y_{t}\\ 
Y_{t-1}\\
\vdots \\
Y_{t-(p-1)}\\
\e_{t}\\
\e_{t-1}\\
\vdots\\
\e_{t-(q-1)}
\end{array} \right),
\quad 
\xi_t  =
\left(
\begin{array}{c} 
\e_{t} \\
0\\
\vdots \\
0\\
\end{array}  \right), 
\quad 
\Sigma = (e_1\otimes e_1) + (e_{p+1} \otimes e_{p+1})
\in \mathbb{R}^{(p+q)\times (p+q)},\nonumber\\
\end{align}
\begin{align}
&\widetilde{\cQ} 
= \left(\begin{array}{ccccc|cccc} 
\varphi_1  & \varphi_2    &  \dots    & \dots & \varphi_p     & \theta_1  & \dots     & \theta_{q-1}  & \theta_q \\
1       & 0         &  \dots    &        & 0         & 0         &\dots      &0              & \vdots \\
0       & 1         &  \ddots        &        & \vdots    & \vdots    &      &              & 0 \\
\vdots  &  \ddots        & \ddots     &   \ddots     &           &           &           &         &\vdots  \\
0       & \dots         & 0     &1       &  0         &0          &           &  \dots  & 0   \\
\hline
0       & \dots     & \dots     &         & 0         & 0         &           &\dots         &   0 \\
\vdots  &           &           &         &\vdots     &1          & \ddots    &     &\vdots \\
\vdots  &           &           &            &           &0     & \ddots         &\ddots              &0 \\
0       &\dots      &           &            &      0     &0           &0          &1              & 0
\end{array}\right)\in \mathbb{R}^{(p+q)\times (p+q)},\label{e:enhanced}
\end{align}
see for instance \cite[Proposition~2.1]{Stelzer}.

\noindent \subsection{\textbf{The Schur-stability hypothesis}}
Since we are interested in convergence to the dynamical equilibrium of the system, we assume the following stability condition for $\cQ$ in~\eqref{eq:model}.

\begin{hypothesis}[Schur stability]\label{hyp:hyperbolic} 
The eigenvalues of $\cQ$ have modulus strictly less than one, that is to say, the spectral radius of $\cQ$ satisfies
\[
\rho(\cQ):=\max_{1\leq i\leq d}|q_i|<1,
\]
where $q_1,\ldots,q_d\in \mathbb{C}$ are the eigenvalues of $\cQ$ (possibly repeated by algebraic multiplicity).
\end{hypothesis}

\medskip 
\begin{remark}
For the case of the $\mathsf{ARMA}(p,q)$ model the matrix $\widetilde{\qQ}$ given in~\eqref{e:enhanced} is stable if and only if 
the matrix $\cQ$ in~\eqref{eq:Qcomp} is stable. It can be seen by the fact that by the block diagonal structure 
\begin{equation}
\left(
\begin{array}{cc}
\qQ & \Theta \\
O & S\\
\end{array}
\right), \quad \Theta \in \mathbb{R}^{p\times q}, \quad S\in \mathbb{R}^{q\times q},
\end{equation}
of the matrix $\widetilde \qQ$ that its characteristic polynomial $P_{\widetilde{\qQ}}(z) := \mathsf{det}(\widetilde{\qQ} - z I_{p+q})$, $z\in \mathbb{C}$ satisfies (see also~\cite[2.14.14]{Bernsteinbook}
\[
P_{\widetilde{\qQ}}(z) = 
P_{\qQ}(z) P_{S}(z)
= P_{\qQ}(z) (-z)^q,\quad z\in \mathbb{C}. 
\]
In addition, due to the companion structure of $\cQ$ it is not hard to see (for instance \cite[Theorem~3.3.14]{HornJohnson} up to a relabelling) that 
\begin{equation}\label{eq:charpol}
P_{\qQ}(z)=z^p-\sum_{j=1}^{p}\varphi_{j}z^{p-j}, \quad z\in \mathbb{C}.
\end{equation}
Hence, the enhanced matrix $\widetilde{\cQ}$ is Schur stable if and only if the  matrix $\cQ$ is so.
\end{remark}

\medskip 
\begin{remark}\label{rem:p2}
The computational verification of Schur stability can be carried out via standard algorithms such as the Schur-Cohn test, the Jury test or the Bistritz test, see for instance Chapter~X in~\cite{Mardenbook}.
Nevertheless, a characterization of Schur stability for the  matrix $\cQ$ in~\eqref{eq:Qcomp} in terms of the coefficients $\varphi_1,\ldots,\varphi_p$ is hard to obtain in closed form. 
For instance, even in the case of $p=2$, the matrix $\cQ$ is Schur stable if and only if either
\[
|\varphi_1|+|\varphi_2|<1,\quad \textrm{ "diamond" shape,}
\]
or
\begin{align}\label{e:wing}
\begin{split}
&|\varphi_1|+|\varphi_2|\geq 1,\quad |\varphi_1|-1<|\varphi_2|<1,\quad \varphi^2_1 \varphi_2<0,\\
&2\arccos\left(\frac{1+\varphi^2_1-\varphi^2_2}{2|\varphi_1|}\right)+
\arccos\left(\frac{1-\varphi^2_1+\varphi^2_2}{2|\varphi_2|}\right)<\pi, \quad 
\textrm{ "wing" shape.}
\end{split}
\end{align}
\noindent For details we refer to~\cite{barreras,Cermak2015}. 

\noindent This criteria can be generalized to the situation of \eqref{eq:charpol}, which is a monic polynomial (highest order monomial has coefficient $1$), $\varphi_p \neq 0$,  
and exactly one additional index $\varphi_r\neq 0$, where $r$ and $p$ are co-primes, and all other coefficientes $\varphi_j = 0$ for all $j\neq p, r$. In this case the diamond (also known as Cohn domain) 
\[
|\varphi_1| + |\varphi_r| < 1
\]
is always contained in the stability region, see \cite{barreras}. See also \cite{Borweinbook, Prasolovbook, RahmanSchmeisser} for a comprehensive view on the subject.
\end{remark}

\medskip 
\begin{remark}
A sufficient condition for Schur stability of the  matrix $\cQ$  given in~\eqref{eq:Qcomp} is the following:
$-1<\varphi_1<\varphi_2<\dots<\varphi_p<0$,
see Enestr\"om--Kakeya Theorem in~\cite{Mardenbook}.
We point out that the preceding condition is sufficient but not necessary, see for instance the case $p=2$ in Remark~\ref{rem:p2}.
\end{remark}

\medskip 
\begin{remark}
For $p=3$ 
a complete characterization of Schur stability for \eqref{eq:Qcomp} is given in~Theorem~5 in \cite{Cermak2019} in terms of non-linear inequalities.
 For instance, an application of Theorem~5 in~\cite{Cermak2019} yields the sufficient condition for Schur stability
\[
|\varphi_1+\varphi_2\varphi_3|
+|\varphi_2+\varphi_1\varphi_3|<1-\varphi^2_3.
\]
While for $p=4$ there are still explicit formulas for the eigenvalues of $\cQ$ as functions of the coefficients by Ferrari's method up to our knowledge, a closed form characterization of Schur stability in terms of the coefficients is non-trivial to obtain. The particular case of $\varphi_2=0$ is also covered
in~Theorem~5 in \cite{Cermak2019}. A sufficient condition in this case reads
\[
|\varphi_1+\varphi_3\varphi_4|
+|\varphi_3+\varphi_1\varphi_4|<1-\varphi^2_4.
\]
For higher $p$ only special cases are known in closed form, see for instance~\cite{Cermak2019}.
\end{remark}

Note that the spectral radius is not a matrix norm, since it does not satisfy the triangular inequality. In order to translate Schur stability into a numerically traceable norm contractivity, 
we adapt the following result to our setting, see Lemma 5.6.10 in \cite{HornJohnson}. 

\begin{lemma}[Schur stability vs. norm contractivity] \label{lem:*}
Given $\cQ\in \mathbb{R}^{d\times d}$, $d\in \mathbb{N}$, with 
Schur triangulation 
$\cQ=U\Delta U^*$ and $J_\kappa:=\mathsf{diag}(\kappa,\kappa^2,\ldots,\kappa^d)$ for any $\kappa> \kappa_*:=\max\{1, \frac{\|\Delta\|_1}{1-\rho(\cQ)}\}$ fixed. 
Then it follows: 
\begin{enumerate}
    \item $\cQ$ is Schur stable if and only if there exists a matrix norm $\|\cdot\|_{*} := \|\cdot \|_{*, \kappa}$ such that $\|\cQ\|_{*}<1$. More precisely, 
    \[
    \|\cQ\|_{*, \kappa}\leq \rho(\cQ) + \frac{\|\Delta\|_1}{\kappa} < 1. 
    \]
    Such a matrix norm $\|\cdot\|_{*}$ can be constructed algorithmically via Schur triangularization \cite[Theorem~2.3.1]{HornJohnson} and the matrix $1$-norm (maximum column sum matrix norm).

    \item $\|\cdot\|_{*}$ is submultiplicative and 
    \[
   |Ax|\leq  K_d \|A\|_{*}|x|\quad \forall A\in \mathbb{R}^{d\times d}, x\in \mathbb{R}^d, \quad \textrm{ where } \quad K_d := (d\kappa^{d-1}\|U\|_1 \|U^{*}\|_1). 
    \]
    \item For the Frobenius norm $\|A\|_{F} := \sqrt{\sum_{i,j} {a_{i,j}^2}}$ for $A = (a_{i,j})\in \mathbb{R}^{d\times d}$ we have 
    \[
    \|A\|_F\leq C_* \|A\|_{*}, \quad \textrm{ where }\quad C_* := \sqrt{d} \|S\|_1 \|S^{-1}\|_1, \quad  S:= U J_\kappa.
    \]
    Moreover, $C_* \leq d^{\frac{5}{2}} \kappa^{d-1}$.
\end{enumerate}
\end{lemma}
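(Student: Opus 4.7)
The plan is to treat the three parts in order, with the construction from part (1) feeding directly into the estimates for parts (2) and (3).

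For part (1), the reverse implication is classical: any submultiplicative matrix norm satisfies $\rho(\cQ)\leq \|\cQ\|_*$ (either via Gelfand's formula, or by applying the norm to the eigenvector identity $\cQ v = q v$), so $\|\cQ\|_* < 1$ forces Schur stability. For the forward direction I would define
\[
\|A\|_*:= \|S^{-1}AS\|_1,\qquad S:= UJ_\kappa,
\]
which is automatically a submultiplicative matrix norm because it is the pullback of $\|\cdot\|_1$ by the invertible similarity $A\mapsto S^{-1}AS$. The key computation is
\[
S^{-1}\cQ S \;=\; J_\kappa^{-1}\Delta J_\kappa,
\]
whose $(i,j)$-entry equals $\kappa^{j-i}\Delta_{ij}$ (choosing the Schur triangular orientation so that $\kappa>1$ damps the off-diagonal entries). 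Its diagonal entries are the eigenvalues of $\cQ$, bounded in modulus by $\rho(\cQ)$, while the off-diagonal ones carry factors $\kappa^{-k}$ with $k\geq 1$. Taking the maximum column sum yields
\[
\|\cQ\|_* \;\leq\; \rho(\cQ) + \kappa^{-1}\|\Delta\|_1 \;<\; 1,
\]
by the choice $\kappa > \|\Delta\|_1/(1-\rho(\cQ))$.

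For part (2), submultiplicativity of $\|\cdot\|_*$ is immediate from $\|(S^{-1}AS)(S^{-1}BS)\|_1 \le \|S^{-1}AS\|_1\|S^{-1}BS\|_1$. For the Euclidean estimate, I would chain the standard inequalities $|y|_2 \leq |y|_1$, the defining property $|Mx|_1 \leq \|M\|_1 |x|_1$ of the matrix $1$-norm, and $|x|_1 \leq \sqrt{d}\,|x|_2$, yielding $|Mx|_2 \leq \sqrt{d}\,\|M\|_1|x|_2$ for every $M\in\mathbb{R}^{d\times d}$. Applying this to $A = S(S^{-1}AS)S^{-1}$ and using submultiplicativity of $\|\cdot\|_1$ twice gives
\[
|Ax| \;\leq\; \sqrt{d}\,\|S\|_1 \|S^{-1}\|_1\,\|A\|_*\,|x|.
\]
The bounds $\|S\|_1 \leq \|U\|_1 \|J_\kappa\|_1 = \kappa^{d}\|U\|_1$ and $\|S^{-1}\|_1 \leq \|J_\kappa^{-1}\|_1 \|U^*\|_1 = \kappa^{-1}\|U^*\|_1$ then collapse the prefactor into $K_d = d\kappa^{d-1}\|U\|_1\|U^*\|_1$ (absorbing the $\sqrt{d}$ into the stated $d$).

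For part (3), I would use the elementary inequality $\|M\|_F \leq \sqrt{d}\,\|M\|_1$, which follows from $\sum_i|m_{ij}|^2 \leq \bigl(\sum_i|m_{ij}|\bigr)^2$ summed over $j$ and passing to the maximum column. Applying it to $A = S(S^{-1}AS)S^{-1}$ and using submultiplicativity of $\|\cdot\|_1$ gives
\[
\|A\|_F \;\leq\; \sqrt{d}\,\|A\|_1 \;\leq\; \sqrt{d}\,\|S\|_1\|S^{-1}\|_1\,\|A\|_* \;=\; C_*\,\|A\|_*.
\]
The polynomial bound $C_*\leq d^{5/2}\kappa^{d-1}$ then follows by inserting the trivial entrywise bound $\|U\|_1,\|U^*\|_1 \leq d$ coming from $|U_{ij}|\leq 1$. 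The only delicate point in the whole argument is the bookkeeping in part (1), namely verifying the Schur triangular orientation so that the condition $\kappa>1$ actually damps, rather than inflates, the off-diagonal contributions of $J_\kappa^{-1}\Delta J_\kappa$; all remaining steps reduce to elementary comparisons between the $\ell^1$, $\ell^2$, and Frobenius norms.
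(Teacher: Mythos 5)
Your proof is correct and follows the same route as the paper: pull back the maximum-column-sum norm through the similarity $A\mapsto S^{-1}AS$ with $S=UJ_\kappa$, bound $\|\cQ\|_*\leq \rho(\cQ)+\kappa^{-1}\|\Delta\|_1$, and then reduce $|Ax|\leq C\|A\|_1|x|$ and $\|A\|_F\leq\sqrt{d}\|A\|_1$ back to $\|A\|_*$ via submultiplicativity of $\|\cdot\|_1$ and $\|S\|_1\|S^{-1}\|_1\leq \kappa^{d-1}\|U\|_1\|U^*\|_1$. Your sharper intermediate estimate $|Ax|\leq\sqrt d\,\|A\|_1|x|$ (versus the paper's $d\|A\|_1$) is subsumed by the stated $K_d$, so no discrepancy arises.

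Your closing caution about the ``Schur triangular orientation'' is well placed and worth taking seriously rather than waving away: with the stated upper-triangular Schur form and $\Delta_\kappa=J_\kappa^{-1}\Delta J_\kappa$, the $(i,j)$ entry is $\kappa^{j-i}\Delta_{ij}$, which for $i<j$ carries a \emph{positive} power of $\kappa>1$ and therefore inflates rather than damps. The paper's displayed column-sum formula tacitly assumes the opposite sign in the exponent, and is only consistent if one either takes $\Delta$ lower triangular (as you propose, via a reversed orthonormal basis), or replaces $J_\kappa$ by its inverse, or uses $S=UJ_\kappa^{-1}$. Any of these fixes makes the estimate $\|\Delta_\kappa\|_1\leq \rho(\cQ)+\kappa^{-1}\|\Delta\|_1$ go through verbatim, so the lemma as stated is correct modulo this orientation bookkeeping; you should simply pin down one convention explicitly rather than leaving it as an afterthought.
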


\begin{proof}
(1) Note that in case of $\cQ$ being diagonalizable, $\cQ=UDU^{-1}$ with $U\in \mathsf{GL}(d,\mathbb{C})$, $D=\mathsf{diag}(q_1,\ldots,q_d)$ where $q_1,\ldots,q_d$ are the eigenvalues of $\cQ$,
one can choose 
\[
\|\cQ\|_{*}:=
\|UD U^{-1}\|_1=
\max_{1\leq j\leq d}{|q_j|}.
\]
For general $\cQ$, the Schur triangularization yields $\cQ =U\Delta U^{*}$, where $\Delta\in \mathbb{C}^{d\times d}$ is an upper triangular matrix with the eigenvalues of $\cQ$ on the diagonal, and $U\in \mathbb{C}^{d\times d}$ is a unitary matrix.
We now consider $J_\kappa:=\mathsf{diag}(\kappa,\kappa^2,\ldots,\kappa^d)\in \mathbb{R}^{d\times d}$
and define 
$\Delta_\kappa:=J^{-1}_\kappa \Delta J_\kappa$.
Note that
\begin{align}
\|\Delta_\kappa\|_1&=\max_{1\leq i\leq d} \sum_{j=1}^i
\kappa^{j-i}|\Delta_{j,i}|=
\max_{1\leq i\leq d} 
\left(|q_{i}|+
\sum_{j=1}^{i-1}
\kappa^{j-i}|\Delta_{j,i}|\right)\\
&\leq 
\max_{1\leq i\leq d} 
|q_{i}|+\frac{1}{\kappa}
\max_{1\leq i\leq d}
\sum_{j=1}^{i-1}|\Delta_{j,i}|\\
&\leq 
\rho(\cQ)+\frac{\|\Delta\|_1}{\kappa},
\end{align}
for any $\kappa\geq 1$
where $\Delta=(\Delta_{i,j})$.
On the one hand,
assume that $\rho(\cQ)<1$.
For $\kappa>\max\{1,\|\Delta\|_1/(1-\rho(\cQ))\}$ we have $\|\Delta_\kappa\|_1<1$.
Recall $U^*=U^{-1}$.
For $\kappa>\max\{1,\|\Delta\|_1/(1-\rho(\cQ))\}$ we define
\[
\|\cQ\|_{*}:=
\|(UJ_\kappa)^{-1}\cQ (UJ_\kappa)\|_1
=\|J^{-1}_\kappa U^{*}\cQ UJ_\kappa\|_1=\|J^{-1}_\kappa \Delta J_\kappa \|_1=\|\Delta_\kappa\|_1<1.
\]
Moreover, by Theorem 5.6.7. in \cite{HornJohnson} we have that 
\[
\|A\|_{*}:=\|A\|_{*, \kappa }
:= \|(UJ_\kappa)^{-1}A (UJ_\kappa)\|_1,\quad A\in \mathbb{C}^{d\times d}
\]
defines a norm.
On the other hand,
for the converse statement assume $\|\cQ\|_{*}<1$ for some $\kappa>0$.
Then Theorem~5.6.9.
in~\cite{HornJohnson} implies that
$\rho(\cQ)\leq \|\cQ\|_{*}$, which yields the Schur stability of $\cQ$.\\
\noindent (2) Using the submultiplicativity of $\|\cdot\|_{1}$ we have 
\[
\|AB\|_{*}\leq \|A\|_{*}\|B\|_{*}
\]
for any $A,B\in \mathbb{C}^{d\times d}$. 
Let $A\in \mathbb{R}^{d\times d}$ and $x\in \mathbb{R}^d$. For later use we observe the following calculations. By the Cauchy--Schwarz inequality we have
\begin{align}
|Ax|\leq |x| \|A\|_{F}\leq d|x| \|A\|_{1}.
\end{align}
Let $S:=UJ_\kappa$ and using the  submultiplicativity of $\|\cdot\|_{1}$  we have
\begin{align}
|A x| & \leq d |x| \|A\|_1
 = d|x| \|S(S^{-1}A S)S^{-1}\|_1
\leq 
d|x| \|S\|_1 \|S^{-1}\|_1 \|S^{-1}A S\|_1\\[2mm]
&\leq d|x| \|U\|_1 \|U^{-1}\|_1\|J_\kappa\|_1
\|J^{-1}_\kappa\|_1
 \|A\|_{*}
= (d\kappa^{d-1}\|U\|_1 \|U^{*}\|_1)
 \|A\|_{*}|x|
\end{align}
for all $x\in \mathbb{R}^d$.

\noindent (3) We recall that 
\begin{equation} \label{eq:F1}
\|A\|_{F}\leq \sqrt{d} \|A\|_1. 
\end{equation}
Since $S:=UJ_\kappa$ we continue 
\begin{align}
\|A\|_1 &= \|S (S^{-1} A S) S^{-1}\|_1 = \|S \Delta_\kappa S{{-1}}\|_1 = \|S\|_1 \|S^{-1}\|_1 \|\Delta_\kappa\|_1 \nonumber\\[2mm]
&= \|S\|_1 \|S^{-1}\|_1 \|A\|_{*}.\label{eq:1*}
\end{align}
Combining \eqref{eq:F1} and \eqref{eq:1*} we have that $C_* = \sqrt{d} \|S\|_1 \|S^{-1}\|_1$. 
In the sequel, we estimate $C_*$. The submultiplicativity of $\|\cdot \|_1$ and 
$\|U\|_F = \sqrt{d}$ for the unitary matrix $U$ yields 
\begin{align*}
C_* &= \sqrt{d} \|S\|_1 \|S^{-1}\|_1 
\leq \sqrt{d} \|U\|_1 \|J_\kappa\|_1 \|J_\kappa^{-1}\|_1  \|U\|_1\\[2mm]
&= \sqrt{d} \kappa^{d-1}  \|U\|_1  \|U^{-1}\|_1
\leq d^{\frac{3}{2}} \kappa^{d-1} \|U\|_F  \|U^{-1}\|_F
= d^{\frac{5}{2}} \kappa^{d-1}.
\end{align*}
\end{proof}

\medskip 
\begin{remark} Keep the notation of Lemma~\ref{lem:*}.
\begin{enumerate}
    \item 
The constant $C_*$ given in Lemma~\ref{lem:*} is $\sqrt{d}$ times the condition number of $S$ in the $1$-norm.   
\item By Lemma~\ref{lem:*}\,(2) we have for all $t\in \mathbb{N}$ that 
\begin{align}\label{e:Kd}
|\cQ^t x| & \leq d |x| \|\cQ^t\|_1\leq  K_d
 \|\cQ^t\|_{*}|x| 
 \leq 
 K_{d}
 \|\cQ\|^t_{*}|x|.
\end{align}
We also note that
\[
(\rho(\cQ))^t |x|\leq |\cQ^t x|.
\]
\end{enumerate}
\end{remark}

\noindent \subsection{\textbf{The moment hypothesis and ergodicity}}  Since our results are stated in the Wasserstein distance of order $r\geq 1$ introduced below, we assume the existence of the $r$-th absolute moment.
\begin{hypothesis}[Moment]\label{hyp:moment}
Assume that $\mathbb{E}[|\xi_1|^r]<\infty$ for some $r\geq 1$.
\end{hypothesis}
\noindent Under Hypothesis~\ref{hyp:hyperbolic} and Hypothesis~\ref{hyp:moment} for any initial datum $x\in \mathbb{R}^d$ the stochastic system~\eqref{eq:model} converges in law as $t$ tends to infinity to
\begin{equation}\label{eq:limite}
X_\infty:\stackrel{\mathsf{d}}{=}\sum_{j=0}^{\infty} \cQ^j\Sigma\xi_{j}.
\end{equation}
We denote the law of $X_\infty$ by $\mu$. For completeness this is shown in Lemma~\ref{lem:ergodicity} in Appendix~\ref{ap:ergodicity}. In fact, Lemma~\ref{lem:ergodicity} is formulated under Hypothesis~\ref{hyp:hyperbolic} 
and the existence of a logarithmic absolute moment, which is strictly weaker than Hypothesis~\ref{hyp:moment}. 

\noindent \subsection{\textbf{The Wasserstein-$r$ distance}} In the sequel, we define the Wasserstein distance of order $r\geq 1$ on the space of Borelian probabilities on $\mathbb{R}^d$ with finite $r$-th absolute moment by
\begin{equation}\label{eq:Wr}
\mathcal{W}_r(\mu_1,\mu_2)=\left(\inf_{\pi\in \Pi}\int_{\mathbb{R}^d\times \mathbb{R}^d}|u-v|^r \pi(\ud u, \ud v)\right)^{1/r}, 
\end{equation}
where $\Pi$ is the set of joint distributions (coupling)  between $\mu_1$ and $\mu_2$, that is, for all $\pi\in \Pi$ we have $\pi(\ud u,\mathbb{R}^d)=\mu_1(\ud u)$ and
$\pi(\mathbb{R}^d,\ud v)=\mu_2(\ud v)$. 
In a conscious abuse of notation, for $X_1$ and $X_2$ being random vectors taking values in $\mathbb{R}^d$ and with finite $r$-th absolute moments,
 we write
$\mathcal{W}_r(X_1,X_2):=\mathcal{W}_r(\mathbb{P}_{X_1},\mathbb{P}_{X_2})$,
where $\mathbb{P}_{X_1}$ and $\mathbb{P}_{X_2}$ are the distributions of $X_1$ and $X_2$, respectively.

\noindent For any coupling $\pi \in \Pi$  we denote the expectation with respect to $\pi$ by $\mathbb{E}_{\pi}[\cdot]$. More precisely, 
for any Borel-measurable and bounded function $f:\mathbb{R}^d\times \mathbb{R}^d\to \mathbb{C}$ we have
\[
\mathbb{E}_{\pi}[f(X_1,X_2)]:=\iint\limits_{\mathbb{R}^d\times \mathbb{R}^d} f(u,v) \pi(\ud u, \ud v).
\]
Let $g:\mathbb{R}^d\to \mathbb{C}$ be a Borel-measurable and bounded function. By the definition of $\Pi$,
we point out that
\begin{equation}\label{eq:coupl}
\mathbb{E}_{\pi}[g(X_1)]=\mathbb{E}_{\pi'}[g(X_1)]\quad
\textrm{ and }\quad
\mathbb{E}_{\pi}[g(X_2)]=\mathbb{E}_{\pi'}[g(X_2)]\quad
\end{equation}
for any $\pi,\pi'\in \Pi$. In other words, expectations of a function that only depends on one marginal do not depend on the choice of the coupling, and hence without loss of generality, we denote the expectation with respect a reference coupling by $\mathbb{E}$. For further readings we  refer to classical texts such as \cite{Panaretos, Villani}. 

\noindent \textbf{Elementary bounds of the Wasserstein distance:} 
Note that by definition ~\eqref{eq:Wr} gives the upper bound
\begin{equation}
\mathcal{W}_r(X_1,X_2)\leq \left(\mathbb{E}_{\pi}[|X_1-X_2|^r]\right)^{1/r}\quad \textrm{ for any }\quad \pi \in \Pi.
\end{equation}
For a random vector $X = (X_1, \dots, X_d)$ with values in $\mathbb{R}^d$ and $\mathbb{E}[|X|^2] <\infty$
we denote by $\mathsf{Cov}(X) = (\mathsf{Cov}(X_i, X_j))_{i,j}$ its covariance matrix. In addition for $m\in \mathbb{R}^d$ and $C\in \mathbb{R}^{d\times d}$ being symmetric, positive definite, $\mathcal{N}(m, C)$ denotes $d$-dimensional normal distribution with mean $m$ and covariance matrix $C$. 
We recall the following result. In \cite[Theorem~2.1]{Gelbrich} the author derives the following Gaussian lower bounds for the $\mathcal{W}_2$-distance 
\begin{align}
&\mathcal{W}_2^2(X_1, X_2)\nonumber\\
&\qquad \geq \mathcal{W}_2^2(\mathcal{N}(\mathbb{E}[X_1], \mathsf{Cov}(X)), \mathcal{N}(\mathbb{E}[X_2], \mathsf{Cov}(Y)))\label{e:Gelbrich}\\
&\qquad = |\mathbb{E}[X_1]- \mathbb{E}[X_2]|^2 + \mathsf{Trace}(\mathsf{Cov}(X_1)+\mathsf{Cov}(X_2)-2 (  \mathsf{Cov}(X_1)^\frac{1}{2} \mathsf{Cov}(X_2) \mathsf{Cov}(X_1)^{\frac{1}{2}})^\frac{1}{2}),
\label{e:Gelbrich2}
\end{align}
where $\mathsf{Trace}(M)=\sum_{j=1}^d M_{j,j}$ for any square matrix $M\in \mathbb{R}^{d\times d}$.
Since $\mathcal{W}_r(X_1, X_2)\geq \mathcal{W}_2(X_1, X_2)$ for $r\geq 2$ by Jensen's inequality and the so-called Bures distance between positive definite matrices (see~\cite{Bhatia})
$$\mathsf{Trace}(\mathsf{Cov}(X_1)+\mathsf{Cov}(X_2)-2 (\mathsf{Cov}(X_1)^\frac{1}{2} \mathsf{Cov}(X_2) \mathsf{Cov}(X_1)^{\frac{1}{2}})^\frac{1}{2})\geq 0,$$ we have $\mathcal{W}_r(X_1, X_2) \geq |\mathbb{E}[X_1]-\mathbb{E}[X_2]|$, $r\geq 2$. 
The remarkable formula \eqref{e:Gelbrich} has the disadvantage to require the square root of typically noncommuting matrices $\mathsf{Cov}(X)$ and $\mathsf{Cov}(Y)$, which turns out to be cumbersome in practice. 
In fact, 
Jensen's inequality implies for any $r\geq 1$ 
and any coupling $\pi \in \Pi$ that
\begin{equation}\label{eq:kj}
\begin{split}
\left|\mathbb{E}_\pi[X_1]-\mathbb{E}_\pi[X_2]\right|&=
\left|\mathbb{E}_\pi[X_1-X_2]\right|\leq \mathbb{E}_\pi[|X_1-X_2|]\leq \left(\mathbb{E}_\pi[|X_1-X_2|^r] \right)^{1/r}.
\end{split}
\end{equation}
Using~\eqref{eq:coupl} and optimizing over all $\pi\in \Pi$
we deduce 
\begin{equation}\label{e:meandifference}
\left|\mathbb{E}[X_1]-\mathbb{E}[X_2]\right|\leq 
\mathcal{W}_r(X_1,X_2)\quad \textrm{ for }\quad r\geq 1.
\end{equation}
The left-hand side of \eqref{e:meandifference}
is sometimes referred to as \textit{the engineer's distance}.
More basic properties of the Wasserstein distances are gathered in Lemma~\ref{lem:basic} in Appendix~\ref{ap:A}. Furthermore, in \cite{Kelbert} more upper and lower bounds for the Wasserstein distance between multivariate Gaussians are established. 

\noindent \subsection{\textbf{The Sliced Wasserstein-$r$ distance}}
Wasserstein distances of order $r\geq 1$ between the laws of random vectors with values in $\mathbb{R}^d$ are notoriously costly to approximate, while in dimension one there are explicit quantile formulas. The notion of sliced Wasserstein distance of order $r\geq 1$ introduced in \cite{RPC2010} is based on a spherical decomposition and balances these aspects.  
 The ideas consists of measuring the Wasserstein distance of the respective random vectors projected on a vector on the unit sphere and subsequently averaging the scalar Wasserstein-$r$ distances of the laws radially projected random vectors 
uniformly on the unit sphere in $\mathbb{R}^d$. For more details we refer to \cite{BDC25}. 
For simplicity we introduce the sliced Wasserstein-$r$ distance in terms of random vectors. 

\begin{definition}[Sliced Wasserstein distance]
For the laws of random vectors $X, Y$ with values $\mathbb{R}^d$ we define 
\[
S\mathcal{W}_r(X, Y) := \frac{1}{A_d}\left(\int_{|v|= 1} \mathcal{W}_r^r(\langle X, v\rangle, \langle Y, v\rangle) \ud \mathcal{H}_{d}(v)\right)^\frac{1}{r},
\]
where $\mathcal{H}_{d}$ denotes the (non-normalized) canonical surface measure on the unit sphere $\{|v| = 1\}\subset \mathbb{R}^d$ and 
$$A_d =\mathcal{H}_{d}(\{|v|=1\}) = \frac{2\pi^{\frac{d}{2}}}{\Gamma(\frac{d}{2})}.$$ 
\end{definition}

\noindent We note that $\mathcal{W}_r(\langle X, v\rangle, \langle Y, v\rangle)$ has explicit formulas in terms of the quantiles of the respective distributions, see for instance \cite{BDC25}.

\medskip 
\begin{remark}\label{rem:sliced} Fix $r\geq 1$ and consider $X, Y$ with finite $r$-th absolute moment. 
\begin{enumerate}
\item \noindent We note that $\mathcal{W}_r(\langle X, v\rangle, \langle Y, v\rangle)$ has explicit formulas in terms of the quantiles of the respective distributions, see for instance \cite{BDC25}.
\item Following \cite{CS24} we remark that $S\mathcal{W}_r$ and $\mathcal{W}_r$ generate the same topology and that always 
\[
\mathcal{W}_r(X, Y)\geq S\mathcal{W}_r(X, Y).
\]
\item By Jensen's inequality for $1 \leq s \leq r$ 
\[
S\mathcal{W}_s(X, Y)\leq S\mathcal{W}_r(X, Y). 
\]
\item We further note that by \eqref{e:meandifference} we have 
\begin{align*}
\mathcal{W}_r(\langle X, v\rangle, \langle Y, v\rangle)
\geq |\mathbb{E}[\langle X-Y, v\rangle]| = |\langle \mathbb{E}[X-Y], v\rangle|
= |\langle \mathbb{E}[X]-\mathbb{E}[Y], v\rangle|.
\end{align*}
Moreover we obtain the lower bound
\begin{align*}
S\mathcal{W}_r(X,Y) 
&\geq \frac{1}{A_d}\left(\int_{\{|v|=1\}} |\langle \mathbb{E}[X-Y], v\rangle|^r\ud \mathcal{H}_{d}(v)\right)^\frac{1}{r}\\
&=  |\mathbb{E}[X-Y]| \frac{1}{A_d}\left(\int_{\{|v|=1\}} \Big|\langle \frac{\mathbb{E}[X-Y]}{|\mathbb{E}[X-Y]|}, v\rangle\Big|^r\ud \mathcal{H}_{d}(v)\right)^\frac{1}{r}\\
&=  |\mathbb{E}[X-Y]| \frac{1}{A_d}\left(\int_{\{|v|=1\}} |\langle w, v\rangle|^r\ud \mathcal{H}_{d}(v)\right)^\frac{1}{r}\\
&\geq |\mathbb{E}[X-Y]| \frac{1}{A_d}\int_{\{|v|=1\}} |\langle w, v\rangle|\ud \mathcal{H}_{d}(v)
\end{align*}    
for any $w\in \mathbb{R}^d$ with $|w| = 1$ due to the rotational invariance of $\mathcal{H}_{d}$. 
In particular, for the first unit vector $w = e_1\in \mathbb{R}^d$ we have for the integral 
\begin{align*}
\int_{\{|v|=1\}} |\langle e_1, v\rangle|^r \frac{\ud\mathcal{H}_{d}(v)}{A_d}
&= \int_{\{|v|=1\}} |v_1|^r\frac{\ud\mathcal{H}_{d}(v)}{A_d}= 2 \pi^{\frac{d-1}{2}}\frac{ \Gamma\left(\frac{r + 1}{2}\right)}{\Gamma\left(\frac{r + d}{2}\right)} \frac{\Gamma(\frac{d}{2})}{2 \pi^{\frac{d}{2}}} = \frac{ \Gamma\left(\frac{r + 1}{2}\right)}{\Gamma\left(\frac{r + d}{2}\right)} \frac{\Gamma(\frac{d}{2})}{\pi^{\frac{1}{2}}}.
\end{align*}
\end{enumerate}
\end{remark}

\bigskip 
\section{\textbf{Main results}}
\subsection{\textbf{Ergodic interpolation}}\label{ss:affineergodicinterpolation}\hfill\\
\noindent We start with the following auxiliary estimate, which to our knowledge is new in the literature. 
\begin{lemma}[Affine transport estimates]\label{prop:affine}
Let $X$ be a random vector in $\mathbb{R}^d$,
$v\in \mathbb{R}^d$ be a deterministic vector and $R\in \mathbb{R}^{d\times d}$.
Assume that Hypothesis~\ref{hyp:moment} for some $r\geq 1$. 
Then for any $1\leq q\leq r$ it follows 
\begin{equation}
|v+(R-I_d)\mathbb{E}[X]|\leq \mathcal{W}_q(X,v+RX)\leq 
(\mathbb{E}[|v+(R-I_d)X)|^{q}])^{1/q}.
\end{equation}
\end{lemma}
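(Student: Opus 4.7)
The plan is to sandwich $\mathcal{W}_q(X, v+RX)$ between two elementary quantities: the upper bound is obtained from a single explicit coupling, while the lower bound is just the mean-difference inequality \eqref{e:meandifference} applied to the pair $(X, v+RX)$.

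For the upper bound, I would use the ``diagonal'' coupling $\pi$ defined as the pushforward of the law of $X$ under the map $x \mapsto (x, v+Rx)$. This is manifestly a coupling of the laws of $X$ and of $v+RX$, so it belongs to the admissible set $\Pi$ in the definition \eqref{eq:Wr}. The algebraic identity
\begin{equation}
X - (v + RX) = -v - (R-I_d) X
\end{equation}
yields $|X - (v+RX)| = |v + (R-I_d)X|$, and hence
\begin{equation}
\mathcal{W}_q^q(X, v+RX) \leq \mathbb{E}_\pi\bigl[|X - (v+RX)|^q\bigr] = \mathbb{E}\bigl[|v + (R-I_d)X|^q\bigr],
\end{equation}
which is the desired upper estimate. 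Note that finiteness of the right-hand side follows from Hypothesis~\ref{hyp:moment} applied to $X$ (with $q \leq r$) together with the triangle inequality and the fact that $R-I_d$ and $v$ are deterministic.

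For the lower bound, I would invoke \eqref{e:meandifference}, which gives $\mathcal{W}_q(X_1, X_2) \geq |\mathbb{E}[X_1] - \mathbb{E}[X_2]|$ for every $q \geq 1$. Specializing to $X_1 = X$ and $X_2 = v+RX$ and using linearity of expectation,
\begin{equation}
\mathbb{E}[X] - \mathbb{E}[v+RX] = (I_d - R)\mathbb{E}[X] - v = -\bigl(v + (R-I_d)\mathbb{E}[X]\bigr),
\end{equation}
so taking absolute values yields the claimed lower bound $|v + (R-I_d)\mathbb{E}[X]| \leq \mathcal{W}_q(X, v+RX)$. Both steps are essentially one-line observations; the only subtlety, and hardly an obstacle, is to verify that the diagonal coupling indeed has the correct marginals and that the cited mean-difference inequality is valid for the relevant exponent $q \in [1, r]$, both of which follow directly from the moment hypothesis.
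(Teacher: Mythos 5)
Your proof is correct and matches the paper's approach in spirit, but it is actually more complete. The paper's written proof only derives the lower bound $|v+(R-I_d)\mathbb{E}[X]|\leq \mathcal{W}_1 \leq \mathcal{W}_q$ (by bounding the mean difference via an arbitrary coupling and then invoking Jensen, essentially re-deriving \eqref{e:meandifference}), and leaves the upper bound implicit; you supply it explicitly via the diagonal coupling $x\mapsto(x,v+Rx)$, which is the intended argument. The only cosmetic difference on the lower-bound side is that you invoke the already-established inequality \eqref{e:meandifference} directly rather than re-deriving it, which is slightly cleaner.
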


The proof is given in Appendix~\ref{a:proofs3.1}.

\medskip 

\begin{remark}~
\begin{enumerate}
    \item We stress that our estimates do not rely on the Brenier Theorem and are hence universal in the sense of minimal requirements.  
    \item In the special case of $r=q=2$, $X$ having finite second moments and $R$ being symmetric and positive semidefinite, the authors in \cite{Chafai} recently also show with the help of the Brenier transport map the converse inequality of \eqref{e:Gelbrich} for an affine transport map $x\mapsto Rx +v$ and hence the identity 
\begin{align}\label{e:Chafai}
\mathcal{W}_2^2(X,v+RX) = \mathsf{Trace}(\Sigma_1 +\Sigma_2 - 2 R \Sigma_1) + |v + (R-I_d)\mathbb{E}[X]|^2,
\end{align}
where $\Sigma_1 = \mathsf{Cov}(X)$ and $\Sigma_2 = \mathsf{Cov}(RX) = R \mathsf{Cov}(X) R^T$. 
\end{enumerate}
\end{remark}

We state the first main result. It reflects the different rates of convergence for the mean and the variance laid out in the formulas \eqref{eq:for1W} and \eqref{eq:srsd} of the motivating example. 

\begin{theorem}[Ergodicity bounds under ergodic interpolation]\label{thm:affine}
Assume Hypothesis~\ref{hyp:hyperbolic} and Hypothesis~\ref{hyp:moment} for some $r\geq 1$.
In addition, assume that for each $t>0$ and $x\in \mathbb{R}^d$ there exist deterministic $A_t(x)\in \mathbb{R}^{d\times d}$ and deterministic $v_t(x)\in \mathbb{R}^d$ satisfying the ergodic interpolation between $x$ and $X_\infty$ as follows
\begin{equation}\label{e:affine-ergodi-interpolation}
X_t(x)\stackrel{\mathsf{d}}{=}A_t(x) X_\infty+v_t(x).
\end{equation}
Then for any $1\leq p\leq r$ it follows that
\begin{equation}\label{eq:thineq}
\begin{split}
|(A_t(x)-I_d)\mathbb{E}[X_\infty]+v_t(x)|\leq \mathcal{W}_p(X_t(x),X_\infty)
&\leq (\mathbb{E}[|(A_t(x)-I_d)X_\infty+v_t(x)|^{p}])^{1/p}\\
&\leq \|A_t(x)-I_d\|_F (\mathbb{E}[|X_\infty|^{p}])^{1/p}+|v_t(x)|.
\end{split}
\end{equation} 
\end{theorem}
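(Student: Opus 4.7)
The plan is to obtain all three inequalities as a direct consequence of Lemma~\ref{prop:affine} combined with Minkowski's inequality and the Cauchy--Schwarz estimate $|Ax|\leq \|A\|_F |x|$ that was already invoked in the proof of Lemma~\ref{lem:*}\,(2). The affine interpolation hypothesis~\eqref{e:affine-ergodi-interpolation} is exactly what is needed to translate the one-sample affine bound of Lemma~\ref{prop:affine} into a bound on $\mathcal{W}_q(X_t(x), X_\infty)$.

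First, I would apply Lemma~\ref{prop:affine} with the choices $X := X_\infty$, $v := v_t(x)$ and $R := A_t(x)$. Since by Hypothesis~\ref{hyp:moment} and the representation~\eqref{eq:limite} (via Lemma~\ref{lem:ergodicity}) the limiting variable $X_\infty$ has finite $r$-th absolute moment, Lemma~\ref{prop:affine} yields
\begin{equation}
|v_t(x)+(A_t(x)-I_d)\mathbb{E}[X_\infty]|\leq \mathcal{W}_q\bigl(X_\infty,\, v_t(x)+A_t(x)X_\infty\bigr)\leq \bigl(\mathbb{E}[|v_t(x)+(A_t(x)-I_d)X_\infty|^q]\bigr)^{1/q}
\end{equation}
for any $1\leq q\leq r$. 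The affine-ergodic-interpolation assumption~\eqref{e:affine-ergodi-interpolation} gives $X_t(x)\stackrel{d}{=}v_t(x)+A_t(x)X_\infty$, so the Wasserstein distance depends only on the laws and we may substitute $\mathcal{W}_q(X_\infty,v_t(x)+A_t(x)X_\infty)=\mathcal{W}_q(X_t(x),X_\infty)$, which establishes the first two inequalities of~\eqref{eq:thineq}.

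For the third inequality I would use Minkowski's inequality in $L^q(\Omega)$ applied to the sum $(A_t(x)-I_d)X_\infty + v_t(x)$, exploiting that $v_t(x)$ is deterministic so that $(\mathbb{E}[|v_t(x)|^q])^{1/q}=|v_t(x)|$, giving
\begin{equation}
\bigl(\mathbb{E}[|v_t(x)+(A_t(x)-I_d)X_\infty|^q]\bigr)^{1/q}\leq \bigl(\mathbb{E}[|(A_t(x)-I_d)X_\infty|^q]\bigr)^{1/q}+|v_t(x)|.
\end{equation}
Then for the remaining matrix-times-vector term I would apply the deterministic bound $|(A_t(x)-I_d)X_\infty|\leq \|A_t(x)-I_d\|_F\,|X_\infty|$ (the Cauchy--Schwarz inequality applied row by row, as already used in the proof of Lemma~\ref{lem:*}), take the $q$-th power and expectation, pull the deterministic Frobenius norm out of the expectation, and conclude.

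There is no real obstacle here since both ingredients (Lemma~\ref{prop:affine} and the Frobenius bound) are in place and only need to be chained; the only subtle point is the law-invariance of $\mathcal{W}_q$ under the identification in~\eqref{e:affine-ergodi-interpolation}, which requires no additional argument beyond the definition~\eqref{eq:Wr}.
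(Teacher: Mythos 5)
Your proposal is correct and follows essentially the same route as the paper: apply Lemma~\ref{prop:affine} with $X=X_\infty$, $R=A_t(x)$, $v=v_t(x)$, use the law-invariance of $\mathcal{W}_q$ together with~\eqref{e:affine-ergodi-interpolation} to rewrite the left-hand side as $\mathcal{W}_q(X_t(x),X_\infty)$, then finish with Minkowski's inequality and the Frobenius-norm bound. No gaps.
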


\begin{proof}
By Hypothesis~\ref{hyp:hyperbolic} and Hypothesis~\ref{hyp:moment} there exists $X_\infty = \lim_{t\to\infty} X_t(x)$ in distribution, which is shown in Lemma~\ref{lem:ergodicity} in the appendix.  The remainder of the proof follows directly from the chain of inequalities given in 
Lemma~\ref{prop:affine} with the help of Minkowski's inequality and the matrix norm  submultiplicativity for $R=A_t(x)$ and $v=v_t(x)$.
\end{proof}

\medskip 
\begin{remark}[Common affine transport reference]\label{rem:commonOT}
Let $X$ and $X'$ be two random vectors taking values in $\mathbb{R}^d$ having finite $r$-th absolute moment for some $r\geq 1$.
Assume that $X\stackrel{\mathsf{d}}{=}AZ+v$ and $X'\stackrel{\mathsf{d}}{=}A'Z'+v'$, where $Z$ and $Z'$ are random vectors taking values in $\mathbb{R}^d$,
$A,A'\in \mathbb{R}^{\ell\times d}$ are deterministic matrices and $v,v'\in \mathbb{R}^d$ are deterministic vectors. In addition, $Z\stackrel{\mathsf{d}}{=}Z'$.
The synchronous coupling $Z''\stackrel{\mathsf{d}}{=}Z'\stackrel{\mathsf{d}}{=}Z$ with the help of Minkowski's inequality and the matrix norm  submultiplicativity yields 
\begin{equation}
\begin{split}
\mathcal{W}_p(X,X')&\leq (\mathbb{E}[|(AZ''+v)-(A'Z''+v')|^p])^{1/p}\\
&\leq 
(\mathbb{E}[|(A-A')Z''+(v-v')|^p])^{1/p}\\
&\leq
\|A-A'\|_F (\mathbb{E}[|Z''|^p])^{1/p}+|v-v'|
\end{split}
\end{equation}
for all $1\leq p\leq r$.
In particular, Theorem~\ref{thm:affine} can be recovered taking $Z\stackrel{\mathsf{d}}=X_\infty$, $A=A_t(x)$, $v=v_t(x)$, $Z'=X_\infty$, $A'=I_d$ and $v'=0$.
\end{remark}

\medskip

\begin{remark}
If $X\stackrel{\mathsf{d}}{=}aZ+v$ and $X'\stackrel{\mathsf{d}}{=}a'Z+v'$ for some constants $a,a',v,v'\in \mathbb{R}$ and $Z$ being a random variable with finite second moment with mean $m$ and variance $\sigma^2$.
Using the common reference coupling induce by $Z$ 
for $p=2$ and $d=1$ we have 
\begin{equation}
\begin{split}
\mathcal{W}^2_2(X,X')&\leq \mathbb{E}[|aZ+v-(a'Z+v')|^2]=\mathbb{E}[|(a-a')Z+(v-v')|^2]\\
&=|a-a'|^2\mathbb{E}[|Z|^2]+2(a-a')(v-v')\mathbb{E}[Z]+|v-v'|^2\\
&=|a-a'|^2(\sigma^2+m^2)+2(a-a')(v-v')m+|v-v'|^2\\
&=\sigma^2|a-a'|^2+|(a-a')m+(v-v')|^2\\
&=\mathcal{W}^2_2(\mathcal{N}(\mathbb{E}[X],\mathsf{Var}(X)),\mathcal{N}(\mathbb{E}[X'],\mathsf{Var}(X'))),
\end{split}
\end{equation}
which with the help of~\eqref{e:Gelbrich} yields
\[
\mathcal{W}^2_2(X,X')=\sigma^2|a-a'|^2+|(a-a')m+(v-v')|^2.
\]
\end{remark}

\begin{lemma}[Optimal coupling for location-scale distributions]\label{lem:loc-scale}
Let $X$ be random variable with finite $r$-th absolute moment for some $r\geq 1$.
For any $m\in \mathbb{R}$ and $\sigma\in \mathbb{R}$ it follows that 
\[
\mathcal{W}_r(X,m+\sigma X ) =\left(\mathbb{E}[|m+(\sigma-1)X|^r]\right)^{1/r}.
\]
\end{lemma}

\begin{proof}
For $\sigma=0$,  the proof is straightforward.  
For any $r\geq 1$ we recall that 

\[
\mathcal{W}_r^r(X,Y) = \int_0^1 |F_{X}^{\leftarrow}(u) - F^{\leftarrow}_{Y}(u)|^r \, \mathrm{d} u,
\]
where $F^{\leftarrow}_{X}$ and $F^{\leftarrow}_{Y}$ are the quantiles of the law of $X$ and $Y$, respectively.
We note that
\[
\mathcal{W}_r^r(X,Y) = \mathbb{E}[|F_{X}^{\leftarrow}(U)-F_{Y}^{\leftarrow}(U)|^r], 
\]
where $U$ is uniformly  distributed on $[0,1]$.
We point out that $X\stackrel{\mathsf{d}}{=}F_{X}^{\leftarrow}(U)$ and $Y\stackrel{\mathsf{d}}{=}F_{Y}^{\leftarrow}(U)$.
For $Y\stackrel{\mathsf{d}}{=}m+\sigma X$ we have 
\[
\mathcal{W}_r^r(X,m+\sigma X ) = \mathbb{E}[|F_{X}^{\leftarrow}(U)-F_{m+\sigma X}^{\leftarrow}(U)|^r]. 
\]
Note that for any $u\in (0,1)$ and $m\in \mathbb{R}$ and  $\sigma> 0$ (without loss of generality) we have 
\[
F^{\leftarrow}_{m+\sigma X}(u)=m+\sigma F^{\leftarrow}_{X}(u),
\]
which yields
\[
\mathcal{W}_r^r(X,m+\sigma X ) = \mathbb{E}[|m+(\sigma-1)F^{\leftarrow}_{X}(U)|^r]=\mathbb{E}[|m+(\sigma-1)X|^r].
\]
\end{proof}

\medskip
\begin{example}\label{ex:Bernoulli}
For $d=1$ consider the i.i.d. symmetric Rademacher $(\xi_t)_{t\in \mathbb{N}}$, $\xi_t \stackrel{\mathsf{d}}{=} \frac{1}{2}\delta_{-1} + \frac{1}{2}\delta_1$ and the respective autoregressive model 
\[
X_{t}(x) = \frac{1}{2}X_{t-1} + \frac{1}{2}\xi_t, \qquad X_0(x) = x. 
\]
It known from the literature (via the dyadic representation of the natural numbers) that $X_{\infty} \stackrel{\mathsf{d}}{=} U([-1,1])$. 
The discrete approximation of the absolutely continuous limiting law and the affine shape of the ergodic interpolation relation \eqref{e:affine-ergodi-interpolation} are obviously incompatible. However, in the Appendix~\ref{a:Bernoulli} we calculate 
\[
\mathcal{W}_2(X_t, X_\infty) = \frac{1}{2^t} \sqrt{x^2 + \frac{1}{3}},
\]
that is, there is no additive separation between the scale mean component which is $\frac{1}{2^t} |x|$ and the additional noise convolution component. We refer to Appendix~\ref{a:Bernoulli} for the precise calculations. 
This is confirmed by numerical validation. 
\begin{figure}[h!]
    \centering
\includegraphics[width=0.55\textwidth]{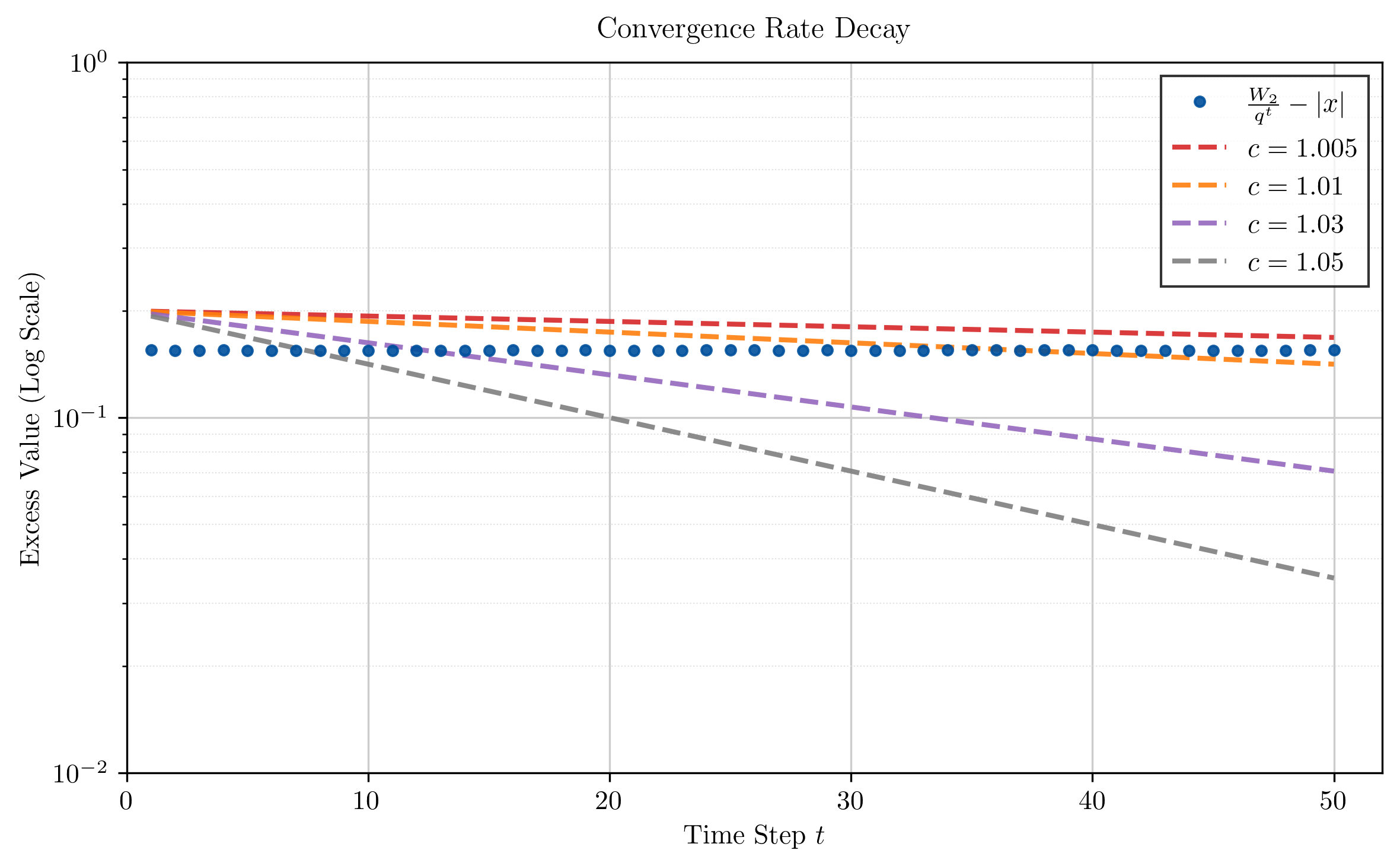}
    \caption{The plot compares the simulated ratio $\mathcal{W}_2/(1/2)^t$ against hypothetical decay bounds $|x| + C (1/2)^{t(c-1)}$.  The blue logarithmic data cross exponential bounds with small exponents $c-1 = 0.05, 0.03, 0.01, 0.005$.} 
    \label{fig:w2_excess1}
\end{figure}
\end{example}

\noindent We recall that the norm $\|\cdot \|_* = \|\cdot \|_{*, \kappa}$ and the constant $C_* = C_{*, \kappa}$ are constructed for a fixed value of $\kappa > \kappa_*$ in Lemma~\ref{lem:*}.   

\begin{theorem}[Ergodic interpolation for Gaussian noise]\label{th:affineGauss}
Assume that $\cQ$ satisfies Hypothesis~\ref{hyp:hyperbolic}.
Assume that $\xi_1$ has a Gaussian distribution with vector mean $m$ and covariance matrix $\Xi$.
In addition, assume that $\Sigma_\infty$ is invertible.
For any deterministic $B\in \mathbb{R}^{\ell \times d}$, $x\in \mathbb{R}^d$, $r\geq 1$ and $t\in \mathbb{N}$ it follows that 
\begin{equation}\label{eq:thgauss}
\begin{split}
|B\cQ^t (x-(I_d-\cQ)^{-1}\Sigma m)|&\leq 
\mathcal{W}_r(B X_t(x), B X_\infty )\\
&\leq |B\cQ^t (x-(I_d-\cQ)^{-1}\Sigma m)|\\
&\quad +\frac{\|B\|_F^2 \|\Sigma\|_F^2 \|\Xi\|_F}{\lambda_{-}}
 \left(\mathbb{E}[|\mathcal{N}|^r]\right)^{\frac{1}{r}} \frac{C_*^2}{1-\|\cQ\|_*^{2}}\|\cQ\|_*^{2t},
\end{split}
\end{equation}
where $C_*$ is given in Lemma~\ref{lem:*}, $\lambda_{-}$ is the smallest eigenvalue of $\Sigma_\infty$, $\mathcal{N}$ has standard Gaussian distribution on $\mathbb{R}^d$ and $\|\cdot \|_F$ is the Frobenius norm. Moreover, for any $t\in \mathbb{N}$ we have 
\begin{align*}
0&\leq \mathcal{W}_r(B X_t(x), B X_\infty ) - |B\cQ^t (x-(I_d-\cQ)^{-1}\Sigma m)|\\
&\leq \frac{\|B\|_F^2 \|\Sigma\|_F^2 \|\Xi\|_F}{\lambda_{-}}
 \left(\mathbb{E}[|\mathcal{N}|^r]\right)^{\frac{1}{r}} \inf_{\kappa >\kappa_*}\frac{C_*^2}{1-\|\cQ\|_*^{2}}\|\cQ\|_*^{2t}.
\end{align*}
\end{theorem}

\noindent The proof is given in Appendix~\ref{a:proofs3.1}.

\medskip 
\begin{remark}~
\begin{enumerate}
 \item 
Note that $\Sigma_\infty$
 solves a matrix Lyapunov equation, which has a positive definite unique solution when for instance  $\Xi$ is positive definite. Otherwise,  
 non-singularity of $\Sigma_\infty$ can be ensured in terms of controllability assumptions such as the Kalman conditions.
 \item Note that for $r= 2$ we have the exact formula \eqref{e:Gelbrich2}, while up to our knowledge for $r\neq 2$ and $d \geq 2$ no closed formula is known. Moreover, the bounds in \eqref{eq:thgauss} 
can be evaluated without the costly calculation of square root of powers of the covariance matrix, which appear in \eqref{e:Gelbrich2}. 
 \item By Lemma~\ref{lem:*} we have 
 \begin{align*}
 \inf_{\kappa >\kappa_*}\frac{C_*^2}{1-\|\cQ\|_*^{2}}\|\cQ\|_*^{2t} 
 &\leq  \inf_{\kappa >\kappa_*}\frac{d^5 \kappa^{2(d-1)} (\rho(\cQ) + \frac{\|\Delta\|_1}{\kappa})^{2t} }{1-(\rho(\cQ) + \frac{\|\Delta\|_1}{\kappa})^2}. 
 \end{align*}
\end{enumerate} 
\end{remark}

In the following we apply Theorem~\ref{th:affineGauss} to coordinate projections. 

\begin{corollary}[Projected Wasserstein distance I]\label{cor:projected}
Assume that $\cQ$ satisfies Hypothesis~\ref{hyp:hyperbolic}.
Assume that $\xi_1$ has a Gaussian distribution with vector mean $m$ and covariance matrix $\Xi$.
In addition, assume that $\Sigma_\infty$ is invertible.
For any $x\in \mathbb{R}^d$, $v\in \mathbb{R}^d$, $v\neq 0$, $r\geq 1$ and $t\in \mathbb{N}$, it follows that 
\begin{equation}\label{eq:cor}
\begin{split}
|\langle v,\cQ^t (x-(I_d-\cQ)^{-1}\Sigma m)\rangle|
&\leq 
\mathcal{W}_r(\langle v, X_t(x) \rangle,\langle v, X_\infty \rangle)\\
&\leq |\langle v,\cQ^t (x-(I_d-\cQ)^{-1}\Sigma m)\rangle|\\
&\quad+C_*^2\frac{|v|^2 \|\Sigma\|_F^2\|\|\Xi\|_F}{\sqrt{\langle v,(\Sigma_t+\Sigma_\infty) v \rangle}}
\frac{\|\cQ\|_*^{2t}}{1-\|\cQ\|_*^{2}}\frac{\sqrt{2}(\Gamma((r+1)/2))^{\frac{1}{r}}}{\pi^{\frac{1}{2r}}}\\
&\leq |\langle v,\cQ^t (x-(I_d-\cQ)^{-1}\Sigma m)\rangle|\\
&\quad+
\frac{|v| \|\Sigma\|_F^2\|\|\Xi\|_F}{\sqrt{\lambda_-}}
\frac{\sqrt{2}(\Gamma((r+1)/2))^{\frac{1}{r}}}{\pi^{\frac{1}{2r}}} \frac{C_*^2}{1-\|\cQ\|_*^{2}} \|\cQ\|_*^{2t},
\end{split}
\end{equation}
where $\Sigma_t$ and $\Sigma_\infty$ are the covariance matrices of $X_t(0)$ and $X_\infty$, respectively, $C_*$ is given in Lemma~\ref{lem:*}, $\lambda_{-}$ is the smallest eigenvalue of $\Sigma_\infty$, and $\Gamma$ is the Gamma function. 
Moreover, for any $t\in \mathbb{N}$ we have
\begin{align*}
0&\leq \mathcal{W}_r(\langle v, X_t(x) \rangle,\langle v, X_\infty \rangle) -|\langle v,\cQ^t (x-(I_d-\cQ)^{-1}\Sigma m)\rangle|\\
&\leq \frac{|v| \|\Sigma\|_F^2\|\|\Xi\|_F}{\sqrt{\lambda_-}}
\frac{\sqrt{2}(\Gamma((r+1)/2))^{\frac{1}{r}}}{\pi^{\frac{1}{2r}}} \inf_{\kappa >\kappa_*} \frac{C_*^2}{1-\|\cQ\|_*^{2}} \|\cQ\|_*^{2t}.
\end{align*}
\end{corollary}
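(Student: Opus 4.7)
The plan is to specialize Theorem~\ref{th:affineGauss} to the rank-one case $B = v^T \in \mathbb{R}^{1\times d}$, where $\langle v, X_t(x)\rangle$ and $\langle v, X_\infty\rangle$ are scalar Gaussian random variables. Reading off the quantities from \eqref{eq:meant}--\eqref{eq:neumann1} with $B = v^T$, their means are $m_t(x) = \langle v, \cQ^t x\rangle + \langle v, (I_d - \cQ^t)(I_d - \cQ)^{-1}\Sigma m\rangle$ and $m_\infty = \langle v, (I_d - \cQ)^{-1}\Sigma m\rangle$, so $m_t(x) - m_\infty = \langle v, \cQ^t(x - (I_d - \cQ)^{-1}\Sigma m)\rangle$, while the variances are $\sigma_t^2 = \langle v, \Sigma_t v\rangle$ and $\sigma_\infty^2 = \langle v, \Sigma_\infty v\rangle$ with $\Sigma_t$ and $\Sigma_\infty$ as in the statement.

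Next I would use the monotone (quantile) coupling $\langle v, X_t(x)\rangle \stackrel{d}{=} \sigma_t Z + m_t(x)$ and $\langle v, X_\infty\rangle \stackrel{d}{=} \sigma_\infty Z + m_\infty$ with $Z\sim \mathcal{N}(0,1)$. By Minkowski's inequality this produces
\[
\mathcal{W}_r(\langle v, X_t(x)\rangle, \langle v, X_\infty\rangle) \leq |m_t(x) - m_\infty| + |\sigma_t - \sigma_\infty|\,(\mathbb{E}[|Z|^r])^{1/r},
\]
while the matching lower bound is a direct consequence of \eqref{e:meandifference}. The factor $(\mathbb{E}[|Z|^r])^{1/r} = \sqrt{2}\,(\Gamma((r+1)/2))^{1/r}/\pi^{1/(2r)}$ is obtained via the standard substitution $u = z^2/2$ in the definition of the Gaussian absolute moment.

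To estimate $|\sigma_t - \sigma_\infty|$ sharply, I would replace the matrix Hemmen--Ando inequality \eqref{e:Endo} by its scalar analogue, namely the elementary identity $(\sqrt{a}+\sqrt{b})\,|\sqrt{a}-\sqrt{b}| = |a-b|$ for $a,b\geq 0$, which together with $(\sqrt{a}+\sqrt{b})^2 \geq a+b$ yields $|\sqrt{a}-\sqrt{b}| \leq |a-b|/\sqrt{a+b}$. Applied with $a = \sigma_t^2$, $b = \sigma_\infty^2$ this is what generates the denominator $\sqrt{\langle v, (\Sigma_t + \Sigma_\infty) v\rangle}$ visible in the middle line of \eqref{eq:cor}. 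For the numerator I would use the Neumann-series cancellation $\sigma_\infty^2 - \sigma_t^2 = \sum_{j=t}^\infty \langle v, \cQ^j \Sigma \Xi \Sigma^T (\cQ^T)^j v\rangle$ and then bound each term by Cauchy--Schwarz and submultiplicativity of $\|\cdot\|_F$, exactly as in the chain \eqref{eq:difcov}, concluding via Lemma~\ref{lem:*}(3) that $|\sigma_t^2 - \sigma_\infty^2| \leq C_*^2 |v|^2 \|\Sigma\|_F^2 \|\Xi\|_F \|\cQ\|_*^{2t}/(1-\|\cQ\|_*^2)$.

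The last inequality in \eqref{eq:cor} is a coarse simplification using $\langle v, \Sigma_t v\rangle \geq 0$ and $\langle v, \Sigma_\infty v\rangle \geq \lambda_-|v|^2$ to replace $\sqrt{\langle v, (\Sigma_t+\Sigma_\infty)v\rangle}$ by $\sqrt{\lambda_-}\,|v|$. I do not expect any genuine obstacle: the scalar Gaussian setting eliminates the non-commutativity issues that made the matrix Hemmen--Ando step delicate in Theorem~\ref{th:affineGauss}, and the main bookkeeping is simply to re-run the Frobenius-norm chain from \eqref{eq:difcov} for the rank-one matrix $B = v^T$.
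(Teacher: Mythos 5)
Your proposal is correct and follows essentially the same route as the paper: the paper derives the upper bound from Remark~\ref{rem:commonOT} (common affine transport reference) rather than phrasing it as the monotone coupling, but the resulting synchronous-coupling-plus-Minkowski bound for the scalar Gaussians is identical, and the scalar square-root inequality, the Neumann-series estimate for $|\sigma_t^2-\sigma_\infty^2|$ mirroring~\eqref{eq:difcov}, and the final $\lambda_-$ relaxation all match the paper's argument.
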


\noindent The proof is given in Appendix~\ref{a:proofs3.1}.

\begin{corollary}[Projected Wasserstein distance II]\label{cor:PWdII}
Assume that $\cQ$ satisfies Hypothesis~\ref{hyp:hyperbolic}.
Assume that $\xi_1$ has a Gaussian distribution with vector mean $m$ and covariance matrix $\Xi$. In addition, assume $\cQ\Sigma\Xi \Sigma^T=\Sigma\Xi \Sigma^T \cQ$ and that $\Sigma_\infty$ is invertible.
For any $x\in \mathbb{R}^d$, $v\in \mathbb{R}^d$, $v\neq 0$, $r\geq 1$ and $t\in \mathbb{N}$ it follows that 
\begin{equation}\label{eq:PWDII}
\begin{split}
\mathcal{W}_r(\langle v, X_t(x) \rangle,\langle v, X_\infty \rangle)&\leq |\langle v,\cQ^t (x-(I_d-\cQ)^{-1}\Sigma m)\rangle|\\
&\quad+\frac{1}{\sqrt{v^T\Sigma_\infty v}}
\frac{\sqrt{2}(\Gamma((r+1)/2))^{1/r}}{\sqrt{\pi}^{1/r}} |(\Sigma\Xi \Sigma^T S)^{1/2}\cQ^t v|^2
\end{split}
\end{equation}
for $t\geq 0$, where  $\Sigma_\infty$ are the covariance matrices of $X_t(0)$, $\Gamma$ is the Gamma function, $\mathcal{N}$ has standard Gaussian distribution on $\mathbb{R}^d$ and 
\[
S:=\sum_{j=0}^{\infty}\cQ^j(\cQ^j)^T=(I-\cQ \cQ^{T})^{-1}.
\]
\end{corollary}

\noindent By Corollary~\ref{cor:projected}, Remark~\ref{rem:sliced} and Minkowski's inequality we obtain the following consequence.  
\begin{corollary}[Sliced Wasserstein-$r$ bounds]\label{cor:sliced}
Assume that $\cQ$ satisfies Hypothesis~\ref{hyp:hyperbolic}.
Assume that $\xi_1$ has a Gaussian distribution with vector mean $m$ and covariance matrix $\Xi$.
In addition, assume that $\Sigma_\infty$ is invertible.
For any $x\in \mathbb{R}^d$, $r\geq 1$ and $t\in \mathbb{N}$, it follows that 
\begin{equation}\label{eq:cor}
\begin{split}
&|\cQ^t (x-(I_d-\cQ)^{-1}\Sigma m)| \frac{ \Gamma\left(\frac{r + 1}{2}\right)}{\Gamma\left(\frac{r + d}{2}\right)} \frac{\Gamma(\frac{d}{2})}{\pi^{\frac{1}{2}}} = \frac{1}{A_d} \left(\int_{\{|v| = 1\}} |\langle v,\cQ^t (x-(I_d-\cQ)^{-1}\Sigma m)\rangle|^r \ud \mathcal{H}_{n}(v)\right)^\frac{1}{r}\\[3mm]
&\leq 
S\mathcal{W}_r(X_t(x), X_\infty)\\
&\leq \frac{1}{A_d} \left(\int_{\{|v|=1\}}  |\langle v,\cQ^t (x-(I_d-\cQ)^{-1}\Sigma m)\rangle|^r  \ud \mathcal{H}_{n}(v)\right)^\frac{1}{r}+\frac{C_*^2\|\Sigma\|_F^2 \|\Xi\|_F}{(1-\|\cQ\|_*^{2}) \sqrt{\lambda_-}} 
\frac{\sqrt{2}(\Gamma(\frac{r+1}{2}))^{\frac{1}{r}}}{\pi^{\frac{1}{2r}}} \|\cQ\|_*^{2t}\\
&= |\cQ^t (x-(I_d-\cQ)^{-1}\Sigma m)| \frac{ \Gamma\left(\frac{r + 1}{2}\right)}{\Gamma\left(\frac{r + d}{2}\right)} \frac{\Gamma(\frac{d}{2})}{\pi^{\frac{1}{2}}}+\frac{ \|\Sigma\|_F^2 \|\Xi\|_F}{\sqrt{\lambda_-}}
\frac{\sqrt{2}(\Gamma(\frac{r+1}{2}))^{\frac{1}{r}}}{\pi^{\frac{1}{2r}}} \frac{C_*^2}{1-\|\cQ\|_*^{2}}\|\cQ\|_*^{2t},
\end{split}
\end{equation}
where $\Sigma_t$ and $\Sigma_\infty$ are the covariance matrices of $X_t(0)$ and $X_\infty$, respectively, $C_*$ is given in Lemma~\ref{lem:*}, $\lambda_{-}$ is the smallest eigenvalue of $\Sigma_\infty$, and $\Gamma$ is the Gamma function. Moreover, for any $t\in \mathbb{N}$ we have
\begin{align*}
0&\leq  S\mathcal{W}_r(X_t(x), X_\infty) - |\cQ^t (x-(I_d-\cQ)^{-1}\Sigma m)| \frac{ \Gamma\left(\frac{r + 1}{2}\right)}{\Gamma\left(\frac{r + d}{2}\right)} \frac{\Gamma(\frac{d}{2})}{\pi^{\frac{1}{2}}}\\
&\leq \frac{ \|\Sigma\|_F^2 \|\Xi\|_F}{\sqrt{\lambda_-}}
\frac{\sqrt{2}(\Gamma(\frac{r+1}{2}))^{\frac{1}{r}}}{\pi^{\frac{1}{2r}}} \inf_{\kappa>\kappa_*}\frac{C_*^2}{1-\|\cQ\|_*^{2}}\|\cQ\|_*^{2t}.
\end{align*}
\end{corollary}

\medskip 

\begin{corollary}[Ergodic interpolation for isotropic $\alpha$-stable noise perturbations]\label{cor:aeiistable}
Assume that $\cQ=\lambda I_d$ with $|\lambda|<1$ and $\Sigma=I_d$. Assume
that the random vector $\xi_0$ has an isotropic $\alpha$-stable distribution on $\mathbb{R}^d$ for some $\alpha \in (1,2]$, that is,
the characteristic function of $\xi_0$ is given by
$\mathbb{E}[e^{\ii \langle \xi,u\rangle}]=e^{-c_0|u|^\alpha}$ for all $u\in \mathbb{R}^d$ and some $\alpha \in (1,2]$ and $c_0>0$.
Then for each $t>0$
 it follows that 
\begin{equation}\label{eq:scala}
X_t(x)\stackrel{\mathsf{d}}{=}\lambda^t x+A(t)X_\infty, \quad \textrm{ where }\quad A(t)=a(t)I_d,\quad a(t):=\left(1-|\lambda|^{\alpha t}\right)^{1/\alpha}.
\end{equation}
In particular,  for any $1\leq q< \alpha$ it follows that
\begin{equation}\label{eq:thineqdos2}
\begin{split}
|\lambda|^t |x|\leq \mathcal{W}_q(X_t(x),X_\infty)
&\leq (\mathbb{E}[|(a(t)-1)X_\infty+\lambda^t x|^{q}])^{1/q}\\
&\leq |\lambda|^t |x|+\sqrt{d}(1-a(t))(\mathbb{E}[|X_\infty|^{q}])^{1/q}\\
&\leq |\lambda|^t |x|+\Lambda(t)(\mathbb{E}[|X_\infty|^{q}])^{1/q} ,
\end{split}
\end{equation} 
where with the help of Bernoulli's inequality we have
\[
\Lambda(t) = \Lambda_{\lambda, \alpha,d}(t)=\sqrt{d}|\lambda|^{\alpha t}
\]
and
\[
\mathbb{E}[|X_\infty|^{q}]
= \frac{2^{q}\Gamma \left(\frac{1+q}{2}\right)
  \Gamma\left(1 - \frac{q}{\alpha}\right)}{\sqrt{\pi}}
  \left(
     \frac{c_0}{1 - |\lambda|^{\alpha}}
  \right)^{q/\alpha},
  \qquad 1 \leq q < \alpha.
\]
\end{corollary}

\medskip 
\begin{remark}
For $x=0$ we note that the right-hand side of~\eqref{eq:thineqdos2} is of the order $|\lambda|^{\alpha t}$, which mimics the behavior of the motivating example~\eqref{eq:au1}, in particular, the convergence rate in~\eqref{eq:srsd} for the case $q=|\lambda|$ and $\alpha=2$.
\end{remark}

\bigskip 
\subsection{\textbf{Generic exponential bounds for Schur stable autoregressive processes}}\label{ss:genericexponentialbounds}\hfill\\

 The ergodic interpolation property \eqref{e:affine-ergodi-interpolation} which guarantees the fast variance convergence 
 is rather specific and depends crucially on the distribution of the underlying noise. 
 If we may drop this condition and go over to general Schur stable distributions 
 the price to pay is to loose the mean/variance behavior, however, we still obtain meaningful explicit non-asymptotic exponential upper and lower bounds.

In the sequel, we use coupling techniques to establish non-asymptotic bounds on the ergodic rate to the dynamical equilibrium. Due to the robustness of the coupling methods the following theorem does not require explicit formulas of the Wasserstein distance (which are typically no accessible) nor Gaussianity of the underlying noise nor a specific shape of the optimal coupling. 

\begin{theorem}[Schur stable autoregressive model]\label{thm:genericARMA}
Assume that $\cQ$ satisfies Hypothesis~\ref{hyp:hyperbolic} and $(\xi_t)_{t\geq 0}$ satisfies Hypothesis~\ref{hyp:moment} for some $r\geq 1$. Consider $(X_t(x))_{t\geq 0}$ the solution of \eqref{eq:model}. 
Then for any $x\in \mathbb{R^d}$, $1\leq p \leq r$ and $t\geq 0$ it follows that
\begin{equation}\label{e:genericARMA}
\begin{split}
\left |\cQ^t(x-(I-\cQ)^{-1}\mathbb{E}[\Sigma \xi_1])\right |
&\leq  \cW(X_t(x),X_\infty) \leq 
\begin{cases}
\mathbb{E}[|\cQ^t(x-X_\infty)|]\leq K_d \|\cQ\|_*^t \mathbb{E}[|x-X_\infty|],\\[2mm]
|\cQ^t x|+K_d
\left(\mathbb{E}\left[|\Sigma\xi_1|^p \right]\right)^{1/p}
\frac{\|\cQ\|^{t+1}_{*}}{(1-\|\cQ\|^p_{*})^{1/p}},
\end{cases}
\end{split}
\end{equation}
where 
\begin{equation}
\mathbb{E}[|x-X_\infty|]\leq 
|x|+\frac{K_d\mathbb{E}\left[|\Sigma\xi| \right]\|\cQ\|_{*}}{1-\|\cQ\|_{*}} 
\end{equation}
and the constant $K_d$ is given Lemma~\ref{lem:*}. 
\end{theorem}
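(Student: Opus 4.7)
My plan is a three-step argument: the lower bound comes from the generic mean-difference inequality \eqref{e:meandifference}, while the two upper bounds come from two different couplings of $X_t(x)$ with $X_\infty$: a \emph{stationary coupling} that cancels all of the noise pathwise, and a \emph{synchronous coupling} read directly off the series representation \eqref{eq:reprelimit}.

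For the lower bound, \eqref{e:meandifference} gives $\cW(X_t(x),X_\infty)\geq |\mathbb{E}[X_t(x)]-\mathbb{E}[X_\infty]|$. I take expectations in \eqref{eq:repret} and \eqref{eq:limite}; Schur stability of $\cQ$ lets me sum the Neumann series $\sum_{j=0}^{t-1}\cQ^j=(I_d-\cQ^t)(I_d-\cQ)^{-1}$ and $\sum_{j\geq 0}\cQ^j=(I_d-\cQ)^{-1}$, and the two mean expressions telescope exactly to $\mathbb{E}[X_t(x)]-\mathbb{E}[X_\infty]=\cQ^t\bigl(x-(I_d-\cQ)^{-1}\Sigma\mathbb{E}[\xi_1]\bigr)$, which is the desired lower bound.

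For the first upper bound, on an enlarged probability space I introduce $Y\stackrel{\textsf{d}}{=} X_\infty$ independent of the noise and run \eqref{eq:model} from $Y$ along the same $(\xi_j)_j$. Invariance of $\mu$ under one step of \eqref{eq:model} (which is immediate from \eqref{eq:limite} together with the i.i.d.~hypothesis) gives $X_t(Y)\stackrel{\textsf{d}}{=} X_\infty$, so $(X_t(x),X_t(Y))$ is a valid coupling of the two marginals and the noise cancels pathwise, $X_t(x)-X_t(Y)=\cQ^t(x-Y)$. Applying Lemma~\ref{lem:*}(2) yields $\cW(X_t(x),X_\infty)\leq K_d\|\cQ\|_*^t\,(\mathbb{E}[|x-Y|^p])^{1/p}$, which is the first branch. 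The auxiliary bound $\mathbb{E}[|x-X_\infty|]\leq |x|+K_d\mathbb{E}[|\Sigma\xi|]\|\cQ\|_*/(1-\|\cQ\|_*)$ is then obtained by plugging the series \eqref{eq:limite} for $X_\infty$ into the triangle inequality and bounding each summand via Lemma~\ref{lem:*}(2); the extra $\|\cQ\|_*$ in the numerator comes from handling the $j=0$ term separately inside $|x|$.

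For the second upper bound, I switch to \eqref{eq:reprelimit} and couple $X_t(x)$ and $X_\infty$ using the same i.i.d.~sequence, which gives $X_t(x)-X_\infty=\cQ^t x-\sum_{j\geq t}\cQ^j\Sigma\xi_j$. Minkowski's inequality separates off $|\cQ^t x|$; the main work is to bound the $L^p$-norm of the tail $\sum_{j\geq t}\cQ^j\Sigma\xi_j$ by the announced $K_d(\mathbb{E}[|\Sigma\xi_1|^p])^{1/p}\|\cQ\|_*^{t+1}/(1-\|\cQ\|_*^p)^{1/p}$. This is where I expect the main technical difficulty to lie: the sharp denominator $(1-\|\cQ\|_*^p)^{1/p}$ (as opposed to the softer $1-\|\cQ\|_*$ one reads off from a naive termwise Minkowski expansion) forces the $p$-th moments of the summands to be consolidated \emph{additively} at the level of $\sum_{j\geq t+1}\|\cQ\|_*^{jp}$. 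I plan to achieve this by inserting the bound $|\cQ^j\Sigma\xi_j|^p\leq K_d^p\|\cQ\|_*^{jp}|\Sigma\xi_j|^p$ from Lemma~\ref{lem:*}(2) \emph{inside} the $p$-th power before taking expectations, and exploiting the geometric structure of the weights to reassemble the series into the advertised closed form.
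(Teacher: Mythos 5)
Your lower bound matches the paper's argument, and your first\nobreakdash-upper\nobreakdash-bound coupling is the same idea the paper uses (the paper integrates $\cW(X_t(x),X_t(y))$ against $\mu(\ud y)$, which is exactly your stationary coupling; both then apply Lemma~\ref{lem:*}\,(2)). One caveat there: the coupling actually delivers $\cW(X_t(x),X_\infty)\leq(\mathbb{E}[|\cQ^t(x-X_\infty)|^p])^{1/p}$, which for $p>1$ is \emph{larger} than the $L^1$-moment $\mathbb{E}[|\cQ^t(x-X_\infty)|]$ printed in the theorem; your version with the $L^p$ moment is the one the coupling in fact proves.

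The second upper bound is where your plan breaks, and you have correctly located the difficulty without resolving it. Minkowski's inequality applied to the tail gives
\begin{equation}
\Bigl(\mathbb{E}\Bigl[\Bigl|\sum_{j\geq t+1}\cQ^j\Sigma\xi_j\Bigr|^p\Bigr]\Bigr)^{1/p}
\leq\sum_{j\geq t+1}\bigl(\mathbb{E}[|\cQ^j\Sigma\xi_j|^p]\bigr)^{1/p}
\leq K_d\bigl(\mathbb{E}[|\Sigma\xi_1|^p]\bigr)^{1/p}\frac{\|\cQ\|_*^{t+1}}{1-\|\cQ\|_*},
\end{equation}
i.e.\ the ``soft'' denominator. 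Pushing the pathwise bound $|\cQ^j\Sigma\xi_j|^p\leq K_d^p\|\cQ\|_*^{jp}|\Sigma\xi_j|^p$ inside the $p$-th power cannot do better: $|\sum_j Z_j|^p\leq\sum_j|Z_j|^p$ fails for $p>1$, and a weighted Jensen argument with the geometric weights simply reproduces the Minkowski constant. Reaching $(1-\|\cQ\|_*^p)^{1/p}$ would require $\mathbb{E}[|\sum_j Z_j|^p]\leq\sum_j\mathbb{E}[|Z_j|^p]$, which is strictly stronger than Minkowski and false for $p>1$ in general (it holds for $p=1$, and for $p=2$ when the summands are independent and centred, but no centring of $\xi_1$ is assumed); the paper's own proof applies exactly this step and labels it ``Minkowski''. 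Indeed the printed bound cannot hold as stated: for $d=1$, $\cQ=q\in(0,1)$, $\Sigma=1$, $\xi_j\equiv 1$, $x=0$, $p=2$ both laws are Dirac masses and $\cW(X_t(0),X_\infty)=q^t/(1-q)$, whereas the claimed bound is $q^{t+1}/\sqrt{1-q^2}$, which is strictly smaller. You should either keep the Minkowski denominator $1-\|\cQ\|_*$, which your synchronous-coupling argument actually proves, or add hypotheses (centred noise, $p\leq 2$) under which the tail $p$-th moments consolidate additively.
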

\noindent The proof is given in Appendix~\ref{a:proofs3.2}.

\medskip 
\begin{remark} By the inverse triangular inequality for the $L^p$ norm we have 
for any coupling $\pi$ between and $X_t$ and $X_\infty$ the universal lower bound 
\begin{align}
|(\mathbb{E}[|X_t(x)|^p])^{1/p}-(\mathbb{E}[|X_\infty|^p])^{1/p}|
\leq \mathbb{E}_\pi[|X_t(x)- X_\infty|^p]^\frac{1}{p}.
\end{align}
Minimizing over all couplings yields 
\begin{align}
|(\mathbb{E}[|X_t(x)|^p])^{1/p}-(\mathbb{E}[|X_\infty|^p])^{1/p}|\leq \cW(X_t(x),X_\infty). 
\end{align}
\end{remark}

\medskip 
\begin{corollary}\label{cor:schursliced}
Assume that $\cQ$ satisfies Hypothesis~\ref{hyp:hyperbolic} and $(\xi_t)_{t\geq 0}$ satisfies Hypothesis~\ref{hyp:moment} for some $r\geq 1$. Consider $(X_t(x))_{t\geq 0}$ the solution of \eqref{eq:model}. 
Then for any $x\in \mathbb{R^d}$, $1\leq p \leq r$ and $t\geq 0$ it follows that
\begin{equation}\label{e:genericARMAsliced}
\begin{split}
\frac{\Gamma(\frac{d}{2})}{\Gamma\left(\frac{d+1}{2}\right)\pi^{\frac{1}{2}}} |\cQ^t(x-(I-\cQ)^{-1}\mathbb{E}[\Sigma \xi_1])|
&\leq  S\cW(X_t(x),X_\infty) \\
&\leq 
\begin{cases}
\frac{\Gamma(\frac{d}{2})}{\Gamma\left(\frac{d+1}{2}\right)\pi^{\frac{1}{2}}} \mathbb{E}[|\cQ^t(x-X_\infty)|],\\[2mm]
\frac{\Gamma(\frac{d}{2})}{\Gamma\left(\frac{d+1}{2}\right)\pi^{\frac{1}{2}}} |Q^t x| + K_d
\left(\mathbb{E}\left[|\Sigma\xi|^p_1 \right]\right)^{1/p}\frac{\|\cQ\|^{t+1}_{*}}{(1-\|\cQ\|^p_{*})^{1/p}},
\end{cases}
\end{split}
\end{equation}
where 
\begin{equation}
\mathbb{E}[|x-X_\infty|]\leq 
|x|+\frac{K_d\mathbb{E}\left[|\Sigma\xi| \right]\|\cQ\|_{*}}{1-\|\cQ\|_{*}} 
\end{equation}
and the constant $K_d$ is given Lemma~\ref{lem:*}. 
\end{corollary}
\noindent The proof is given in Appendix~\ref{a:proofs3.2}.\\

\noindent In the sequel we show Wasserstein estimates for $\cQ$ being diagonalizable, parallel sampling and empirical means. Corresponding results for sliced Wasserstein distance can be deduced straightforwardly. 

\begin{definition}[Diagonalizable]\label{def:generic}
We say that $\cQ\in \mathbb{R}^{d\times d}$ is diagonalizable, if there exists $U\in GL(d, \mathbb{C})$ such that 
\[
U^{-1} \cQ U \quad \textrm{ is a diagonal matrix}. 
\]
\end{definition}

\medskip 
\begin{remark}~
\begin{enumerate}
  \item We do not assume that $\cQ$ is a normal matrix, hence $U$ needs not be orthogonal nor unitary. 
    \item If $\cQ$ is a diagonalizable and Schur stable, 
    then $\max_{i=1, \dots, d} |q_i| < 1$ where 
 $q_1,q_2,\ldots,q_{d-1},q_{d}\in \mathbb{C}$ are the eigenvalues of $\cQ$. 
\item Without loss of generality, we can assume that 
\[
0\leq |q_1|\leq |q_2|\leq 
|q_3|\leq |q_4|\leq \cdots\leq |q_{d-1}|\leq |q_{d}|<1.
\]
Note that $\cQ$ has real entries, which implies that all complex eigenvalues come in complex conjugate pairs.
Recall that the spectral radius of $\cQ$ is $\rho:=|q_d|$.
Since it has $d$ eigenvalues, there exists a basis of eigenvectors $\{w_1,\ldots,w_d\}$ for $\mathbb{C}^d$ satisfying $\cQ w_j=q_jw_j$  and $|w_j|=1$ 
(in a conscious abuse of notation we use $|\cdot|$ for the norms over 
$\mathbb{R}^d$ or $\mathbb{C}^d$) for $j\in \{1,\ldots,d\}$,  see Theorem~1.3.9~in~\cite{HornJohnson}. 
That is,
$\cQ=UDU^{-1}$, where $D=\mathsf{diag}(q_1,\ldots,q_d)$ and  $U=(w_1| w_2|\cdots| w_{d-1}| w_d)\in \mathbb{C}^{d\times d}$.
\end{enumerate}
\end{remark}

\medskip 
\begin{lemma}[Lyapunov exponent]\label{lem:Lyapunov}
Let $\cQ \in \mathbb{R}^{d\times d}$ be diagonalizable satisfying Hypothesis~\ref{hyp:hyperbolic}. Then
for each $z\in \mathbb{R}^d$ we have 
\begin{equation}\label{eq:cotaarriba}
\|U^{-1}\|^{-2}_{F} \sum_{j=1}^d |q_j|^{2t}  |(U^{-1}z)_j|^2
\leq |\cQ^t z|^2
\leq \|U\|^{2}_{F} \sum_{j=1}^d |q_j|^{2t}  |(U^{-1}z)_j|^2,
\end{equation}
where $(U^{-1}z)_j$ is the $j$-th entry of the vector $U^{-1} z$ and $\|\cdot \|_{F}$ is 
Frobenius norm in $\mathbb{C}^{d\times d}$. 
\end{lemma}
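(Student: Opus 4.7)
The plan is to reduce everything to the diagonal form and apply the standard Frobenius–Euclidean inequality $|A v| \leq \|A\|_F |v|$, which holds for any $A \in \mathbb{C}^{d\times d}$ and $v \in \mathbb{C}^d$ via Cauchy--Schwarz on each row. Write $\cQ = U D U^{-1}$ with $D = \textsf{diag}(q_1, \dots, q_d)$, so that $\cQ^t = U D^t U^{-1}$ by direct iteration, and set $w := U^{-1} z \in \mathbb{C}^d$. Then $D^t w$ has $j$-th coordinate $q_j^t w_j$, so that
\[
|D^t w|^2 = \sum_{j=1}^d |q_j|^{2t} |w_j|^2 = \sum_{j=1}^d |q_j|^{2t} |(U^{-1} z)_j|^2,
\]
which is precisely the common quantity appearing in both sides of the claim.

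For the upper bound, I would apply $|Av|\leq \|A\|_F |v|$ with $A = U$ and $v = D^t w$, yielding
\[
|\cQ^t z|^2 = |U (D^t w)|^2 \leq \|U\|_F^2 \, |D^t w|^2,
\]
which is the right-hand side of \eqref{eq:cotaarriba}. For the lower bound I would apply the same inequality with $A = U^{-1}$ and $v = \cQ^t z$, using that $U^{-1}\cQ^t z = D^t w$, to obtain
\[
|D^t w|^2 \leq \|U^{-1}\|_F^2 \, |\cQ^t z|^2,
\]
which upon rearranging gives the left-hand side of \eqref{eq:cotaarriba}.

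The only mild subtlety, which should be addressed in a single sentence, is the discrepancy between complex and real norms: although $U, U^{-1}, D, w$ are a priori complex objects, $\cQ^t z$ itself lies in $\mathbb{R}^d$ because $\cQ$ has real entries, and its Euclidean norm over $\mathbb{R}^d$ agrees with the standard Hermitian norm when $\cQ^t z$ is viewed as an element of $\mathbb{C}^d$. There is no genuine obstacle here; the proof is essentially a two-line sandwich using the submultiplicative bound $|Av| \leq \|A\|_F |v|$ on $\mathbb{C}^d$, applied in one direction to $U$ and in the other to $U^{-1}$.
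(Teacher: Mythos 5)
Your proof is correct and follows essentially the same approach as the paper: both establish the upper bound via the Cauchy--Schwarz estimate $|Av|\leq \|A\|_F|v|$ applied to $U$ acting on $D^t U^{-1}z$, and the lower bound by inserting $U^{-1}U$ and applying the same estimate to $U^{-1}$ acting on $\cQ^t z$. The paper simply writes out the Cauchy--Schwarz step rowwise rather than citing it as the packaged Frobenius bound, but the argument is identical in content.
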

\noindent The proof is given in Appendix~\ref{a:proofs3.2}.

\medskip 

\begin{remark}
If additionally, we assume that $\cQ$ is a normal matrix, then we have
\begin{align}
|\cQ^t z|^2&=|\cQ^t z|^2=|UD^t\widetilde{z}(0)|^2=|D^t\widetilde{z}(0)| =\sum_{j=1}^{d} |\lambda_j|^{2t}|\widetilde{z}(0)|^2.
\end{align}
\end{remark}

\begin{corollary}\label{cor:genericARMA}
Under the hypotheses of Theorem~\ref{thm:genericARMA} we additionally assume that 
$\cQ$ is diagonalizable in the sense of Definition~\ref{def:generic}. Then we have 
\begin{equation}
\begin{split}
&\|U^{-1}\|_{_{\mathrm{F}}} \sqrt{\sum_{j=1}^d |q_j|^{2t}  |(U^{-1}(x-(I-\cQ)^{-1}\Sigma\mathbb{E}[\xi_1]))_j|^2}\\
&\leq  \cW(X_t(x), X_\infty) \\
&\leq 
\begin{cases}
\|U\|_{\mathrm{F}}
\sum_{j=1}^d
 |q_j|^{t}  \mathbb{E}\left[|(U^{-1}(x-X_\infty))_j|
 \right],\\[2mm]
\|U\|_{\mathrm{F}} \sum_{j=1}^d |q_j|^{t}  |(U^{-1}x)_j|+d
\left(\mathbb{E}\left[|\Sigma\xi|^p\right]\right)^{1/p}
\frac{|q_d|^{t+1}}{(1-|q_d|^p)^{1/p}},
\end{cases}
\end{split}
\end{equation}
where 
\begin{equation}
\mathbb{E}[|x-X_\infty|]\leq 
|x|+\frac{K_d\mathbb{E}\left[|\Sigma\xi|\right]|q_d|}{1-|q_d|}.
\end{equation}
\end{corollary}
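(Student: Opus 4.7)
The plan is to deduce the corollary from Theorem~\ref{thm:genericARMA} by invoking Lemma~\ref{lem:Lyapunov} to translate the abstract quantities $|\cQ^t z|$ (appearing both in the lower bound and in the first upper bound, either deterministically or under expectation) into sums weighted by the eigenvalues of $\cQ$. The structural assumption that $\cQ$ is diagonalizable is used only through Lemma~\ref{lem:Lyapunov}, so once that substitution is made the corollary will follow from essentially algebraic manipulations.

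First I would handle the lower bound. From Theorem~\ref{thm:genericARMA} we already have
\[
|\cQ^t(x-(I-\cQ)^{-1}\mathbb{E}[\Sigma\xi_1])|\leq \cW(X_t(x),X_\infty),
\]
and applying the left inequality of \eqref{eq:cotaarriba} in Lemma~\ref{lem:Lyapunov} to the vector $z:=x-(I-\cQ)^{-1}\Sigma\mathbb{E}[\xi_1]$ directly yields the stated eigenvalue-weighted lower bound (with the prefactor $\|U^{-1}\|_{F}^{-1}$, which is presumably the intended reading).

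Next I would tackle the two upper bounds in parallel. For the first one, Theorem~\ref{thm:genericARMA} gives $\cW(X_t(x),X_\infty)\leq \mathbb{E}[|\cQ^t(x-X_\infty)|]$. Applying the right inequality of \eqref{eq:cotaarriba} to the random vector $z = x-X_\infty$ and then using the elementary coordinate bound $\sqrt{\sum_j a_j^2}\leq \sum_j |a_j|$ (valid for any $a_j\in\mathbb{R}$) to pull the square root inside, followed by taking expectation and exchanging it with the finite sum, produces the expression $\|U\|_F\sum_{j=1}^d|q_j|^t\mathbb{E}[|(U^{-1}(x-X_\infty))_j|]$. For the second upper bound, I would estimate $|\cQ^t x|$ by the same Lemma~\ref{lem:Lyapunov}+coordinate-bound argument, and for the noise tail use the diagonalization $\cQ^j=UD^j U^{-1}$ to replace the generic submultiplicativity constant $K_d\|\cQ\|_*^j$ of Theorem~\ref{thm:genericARMA} by a geometric bound of the form $C\cdot |q_d|^j$, then sum the geometric series $\sum_{j\geq t+1}|q_d|^{jp}=|q_d|^{(t+1)p}/(1-|q_d|^p)$ and extract the $p$-th root.

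The main obstacle will be matching the exact numerical constants of the stated corollary, in particular the factor $d$ appearing in front of the noise contribution in the second upper bound. The argument based on Lemma~\ref{lem:Lyapunov} naturally yields a constant of the form $\|U\|_F\|U^{-1}\|_F$, which can be majorized by a dimensional constant in various ways (e.g.\ passing through $\|\cdot\|_1$ and invoking $K_d$ from Lemma~\ref{lem:*}(2) with $\kappa=1$ in the diagonalizable case); a bit of book-keeping will be needed to write this in the form $d/(1-|q_d|^p)^{1/p}$. Apart from this constant tracking, all remaining steps — Minkowski's inequality for the noise tail, the elementary inequality $\sqrt{\sum a_j^2}\leq \sum|a_j|$, and the Neumann geometric sum — are routine.
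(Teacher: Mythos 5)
Your proposal follows exactly the paper's (one-sentence) proof: apply Lemma~\ref{lem:Lyapunov} to the bounds of Theorem~\ref{thm:genericARMA}. You also correctly spot the typo in the stated lower bound: the prefactor should be $\|U^{-1}\|_F^{-1}$, not $\|U^{-1}\|_F$, as is immediate from the left inequality in~\eqref{eq:cotaarriba}.

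One point in your last paragraph is backwards, and it happens to expose a lapse in the paper's statement. You write that $\|U\|_F\|U^{-1}\|_F$ ``can be majorized by a dimensional constant'' and then rewritten as $d$. In fact the inequality runs the other way: by Cauchy--Schwarz,
\[
d = \mathrm{Tr}(U U^{-1}) \le \|U\|_F\,\|U^{-1}\|_F,
\]
with equality precisely when $U^{-1}$ is a scalar multiple of $U^*$, i.e.\ when $U$ is (up to scaling) unitary. Since the paper's own remark after Definition~\ref{def:generic} stresses that $U$ need not be unitary, the constant $d$ in the noise term of the corollary is not attainable from either route you sketch: Lemma~\ref{lem:Lyapunov} gives $\|U\|_F\|U^{-1}\|_F = \sqrt{d}\,\|U^{-1}\|_F$ (using $\|U\|_F=\sqrt{d}$ for unit-norm eigenvector columns), and the $K_d$-route with $\kappa=1$ gives $d\,\|U\|_1\|U^{-1}\|_1 \ge d$. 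So the constant in the paper's displayed bound should carry a conditioning factor of $U$; your mechanism for proving the inequality is otherwise sound and matches the paper's intent, but the promised reduction to the bare factor $d$ does not go through for non-normal $\cQ$.
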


\begin{proof}
We apply Lemma~\ref{lem:Lyapunov} to the bound \eqref{e:genericARMA}.     
\end{proof}

An important application of our generic bounds are bounds for (independent) parallel sampling. 
We start with the following generic lemma, whose bounds can be treated in the case of autoregressive parallel sampling by Theorem~\ref{thm:genericARMA}.  

\begin{lemma}[Parallel Sampling]\label{thm:parallel}
Consider a family of i.i.d. ergodic stochastic processes $(X_t^{(i)})_{t\geq 0}$, $i=1, \dots, n$ 
having finite $p$-th absolute moments for some $p\geq 1$. Then we have 
\begin{equation}
\begin{split}
\sqrt{n}|\mathbb{E}[X^{(1)}_t-X^{1}_\infty]|
 \leq \cW((X^{(1)}_t, \cdots , X^{(n)}_t ), (X_\infty^{(1)}, \cdots , X_\infty^{(n)}) )
\leq \sqrt{n} \cW(X_t^{(1)}, X^{(1)}_\infty).
\end{split}
\end{equation}
Moreover for $p=2$ we have
\begin{equation}\label{e:Panaretos}
\begin{split}
 \mathcal{W}^2_2((X^{(1)}_t(x), \cdots , X^{(n)}_t(x)) ,(X_\infty^{(1)}, \cdots , X_\infty^{(n)}) )=\sum_{j=1}^n \mathcal{W}^2_2(X^{(j)}_t(x),X_\infty).
\end{split}
\end{equation}
\end{lemma}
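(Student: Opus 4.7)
The plan is to handle the three pieces of the statement separately, in each case using the product (independent) coupling of coordinate-wise optimal plans, and treating the product space $\mathbb{R}^{nd}$ as equipped with the Euclidean norm $|(y^{(1)},\ldots,y^{(n)})|^2=\sum_{i=1}^n|y^{(i)}|^2$. For the leftmost inequality I would apply the mean-difference bound \eqref{e:meandifference} to $\mathbf{X}_t:=(X^{(1)}_t,\ldots,X^{(n)}_t)$ and $\mathbf{X}_\infty:=(X^{(1)}_\infty,\ldots,X^{(n)}_\infty)$; since the components are i.i.d., each block of $\mathbb{E}[\mathbf{X}_t-\mathbf{X}_\infty]$ coincides with the common vector $\mathbb{E}[X^{(1)}_t-X^{(1)}_\infty]$, so its Euclidean norm is exactly $\sqrt{n}\,|\mathbb{E}[X^{(1)}_t-X^{(1)}_\infty]|$.

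For the upper bound, I would pick a $\mathcal{W}_p$-optimal coupling $\pi^{(i)}$ of $(X^{(i)}_t,X^{(i)}_\infty)$ for each $i$ and form $\pi:=\bigotimes_{i=1}^n\pi^{(i)}$, which is a valid coupling of $(\mathbf{X}_t,\mathbf{X}_\infty)$. Writing $b_i:=|X^{(i)}_t-X^{(i)}_\infty|$, the variables $b_1,\ldots,b_n$ are i.i.d.~under $\pi$ with $\mathbb{E}[b_1^p]=\mathcal{W}_p^p(X^{(1)}_t,X^{(1)}_\infty)$. The definition of the Wasserstein distance then gives $\mathcal{W}_p^p(\mathbf{X}_t,\mathbf{X}_\infty)\leq \mathbb{E}[(\sum_i b_i^2)^{p/2}]$. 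The last step is to bound this expectation: for $p\geq 2$ the power-mean inequality $(\sum_i b_i^2)^{p/2}\leq n^{p/2-1}\sum_i b_i^p$ together with linearity of expectation yields $\mathcal{W}_p^p(\mathbf{X}_t,\mathbf{X}_\infty)\leq n^{p/2}\mathcal{W}_p^p(X^{(1)}_t,X^{(1)}_\infty)$, which is the $\sqrt{n}$ factor. The range $p\in[1,2)$ is reached by applying Jensen's inequality with the concave map $x\mapsto x^{p/2}$ and exploiting the i.i.d.~structure.

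For the identity \eqref{e:Panaretos} with $p=2$, the product-coupling computation above is already sharp, since the power-mean step degenerates to an equality when $p=2$, giving $\mathcal{W}_2^2(\mathbf{X}_t,\mathbf{X}_\infty)\leq \sum_{i=1}^n\mathcal{W}_2^2(X^{(i)}_t,X^{(i)}_\infty)$. For the matching lower bound, I would observe that for any coupling $\pi$ of $(\mathbf{X}_t,\mathbf{X}_\infty)$ the squared Euclidean cost decomposes additively as $|\mathbf{x}-\mathbf{y}|^2=\sum_i|x^{(i)}-y^{(i)}|^2$, and the projection of $\pi$ onto the $i$-th pair of factors is itself a coupling of $(X^{(i)}_t,X^{(i)}_\infty)$; hence $\mathbb{E}_\pi[|\mathbf{X}_t-\mathbf{X}_\infty|^2]\geq \sum_{i=1}^n\mathcal{W}_2^2(X^{(i)}_t,X^{(i)}_\infty)$, and taking the infimum over $\pi$ closes the identity.

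The main obstacle will be delivering the $\sqrt{n}$ factor uniformly in $p\geq 1$ for the upper bound, since the direct power-mean control of $\mathbb{E}[(\sum_i b_i^2)^{p/2}]$ works cleanly only for $p\geq 2$. In the range $p\in[1,2)$ the Jensen route produces an estimate in terms of the $L^2$-moment of $b_1$, and reconciling this with $\mathcal{W}_p(X^{(1)}_t,X^{(1)}_\infty)$ is delicate because the monotonicity $\mathcal{W}_p\leq \mathcal{W}_2$ points the wrong way. Once this technical wrinkle is resolved, the remaining computations are essentially algebraic bookkeeping.
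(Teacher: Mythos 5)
Your argument matches the paper's in outline---product coupling for the upper bound, the mean-difference inequality \eqref{e:meandifference} applied to the stacked vectors for the lower bound, and additive decomposition of the squared Euclidean cost for \eqref{e:Panaretos}---though the paper simply cites Panaretos--Zemel for \eqref{e:Panaretos} and \cite[Lemma~3]{Mariucci18} for the upper tensorization, so you are filling in details the paper outsources. Your lower-bound computation is identical to the paper's. Your derivation of the $p=2$ identity (product coupling for $\leq$, and the observation that any coupling of the stacked vectors projects to coordinatewise couplings so the additive cost dominates $\sum_i\mathcal{W}_2^2$ for $\geq$) is complete and correct.

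The ``technical wrinkle'' you flag for $p\in[1,2)$ is a genuine issue, and not merely with your line of attack: the bound $\mathcal{W}_p(\mathbf{X}_t,\mathbf{X}_\infty)\leq \sqrt{n}\,\mathcal{W}_p(X_t^{(1)},X_\infty^{(1)})$ does not hold for $p<2$ as stated. Take $n=2$, $d=1$, $p=1$, $X_t^{(i)}$ i.i.d.~uniform on $\{0,1\}$ and $X_\infty^{(i)}\equiv 0$: then $\mathcal{W}_1(X_t^{(1)},X_\infty^{(1)})=\tfrac12$, whereas (the target being deterministic)
\begin{equation}
\mathcal{W}_1(\mathbf{X}_t,\mathbf{X}_\infty)
=\mathbb{E}\bigl[\sqrt{(X_t^{(1)})^2+(X_t^{(2)})^2}\,\bigr]
=\tfrac{2+\sqrt{2}}{4}
>\tfrac{\sqrt{2}}{2}=\sqrt{2}\cdot\tfrac12.
\end{equation}
The product-coupling estimate that does survive for $p\in[1,2]$ uses the subadditivity $(\sum_i a_i)^{p/2}\leq\sum_i a_i^{p/2}$ (not Jensen), which gives $\mathcal{W}_p^p(\mathbf{X}_t,\mathbf{X}_\infty)\leq n\,\mathcal{W}_p^p(X_t^{(1)},X_\infty^{(1)})$, i.e.\ constant $n^{1/p}$; this coincides with $\sqrt{n}$ only at $p=2$. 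Your $p\geq 2$ derivation via the power-mean inequality is correct, and the correct constant uniformly in $p\geq1$ is $n^{\max(1/p,\,1/2)}$. So your reluctance to close the $p<2$ case is warranted---the obstacle you sensed is a real one, and in fact the $\sqrt{n}$ constant in the stated lemma should be repaired.
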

\noindent The proof is given in Appendix~\ref{a:proofs3.2}.

\medskip 
\begin{remark}[Parallel autoregressive sampling]\label{rem:parallelsampling}\hfill
\begin{enumerate}
    \item For a sampling of ergodic autoregressive processes satisfying the conditions of Theorem~\ref{thm:genericARMA} the preceding lemma yields non-asymptotic quantitative upper and lower bounds. 
    \item If we assume additionally that $\cQ$ is diagonalizable, the upper and lower bounds can be written explicitly in terms of the eigendecomposition of $\cQ$, see Corollary~\ref{cor:genericARMA}. 
\end{enumerate}
\end{remark}

\begin{corollary}[Ergodic estimates for the empirical process]\label{cor:empirical}
Consider an i.i.d. sequence of autoregressive processes $(X^{(j)}_t)_{t\geq 0}$ for $j\in \mathbb{N}$ satisfying the hypotheses of Theorem~\ref{thm:genericARMA}. 

Then the empirical mean $S_t^{(n)} := \frac{1}{n}\sum_{j=1}^n X^{(j)}_t(x)$ satisfies the following 
multivariate autoregressive equation 
\begin{equation}\label{e:empirical}
S^{(n)}_{t+1}=\cQ S^{(n)}_t+\Sigma \zeta^{(n)}_{t+1}, \quad t\in \mathbb{N}_0,
\end{equation}
where $(\zeta^{(n)}_t)_{t\geq 0}$ is an i.i.d. sequence given by 
\[
\zeta^{(n)}_t=\frac{1}{n}\sum_{j=1}^{n} \xi^{(j)}_t,\quad t\in \mathbb{N}\quad \textrm{ and }\quad S^{(n)}_0=x.
\]
Hence it satisfies the following bounds
\begin{equation}
\begin{split}
\left |\cQ^t(x-(I-\cQ)^{-1}\mathbb{E}[\Sigma \zeta^{(n)}_1])\right |
&\leq  \cW(S^{(n)}_t(x),S^{(n)}_\infty) \leq 
\begin{cases}
\mathbb{E}[|\cQ^t(x-S^{(n)}_\infty)|]\leq K_d \|\cQ\|_*^t \mathbb{E}[|x-S^{(n)}_\infty|],\\[2mm]
|\cQ^t x|+K_d \|\Sigma\|_F
\left( \mathbb{E}\left[|\xi^{(1)}_1|^p \right]\right)^{1/p}
\frac{\|\cQ\|^{t+1}_{*}}{(1-\|\cQ\|^p_{*})^{1/p}},
\end{cases}
\end{split}
\end{equation}
where $S^{(n)}_\infty = \frac{1}{n} \sum_{j=1}^n X_\infty^{(j)}$. 
\end{corollary}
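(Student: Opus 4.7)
The plan is to recognize the empirical mean $(S^{(n)}_t)_{t\geq 0}$ as itself a $d$-dimensional Schur stable autoregressive process driven by an i.i.d. noise sequence and then to invoke Theorem~\ref{thm:genericARMA} directly with $(S^{(n)}_t)$ and $(\zeta^{(n)}_t)$ in the roles of $(X_t)$ and $(\xi_t)$. First I would verify the recurrence \eqref{e:empirical}: averaging the defining relation $X^{(j)}_{t+1}=\cQ X^{(j)}_{t}+\Sigma\xi^{(j)}_{t+1}$ over $j=1,\dots,n$ linearly gives $S^{(n)}_{t+1}=\cQ S^{(n)}_{t}+\Sigma\zeta^{(n)}_{t+1}$ with $S^{(n)}_0 = x$. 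The noise sequence $(\zeta^{(n)}_t)_{t\in \bN}$ is i.i.d. because the doubly-indexed family $(\xi^{(j)}_t)_{j\leq n,\,t\in \bN}$ is i.i.d.

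Next I would check that the hypotheses of Theorem~\ref{thm:genericARMA} transfer. Hypothesis~\ref{hyp:hyperbolic} involves only $\cQ$, which is unchanged. For Hypothesis~\ref{hyp:moment} at exponent $p\in [1,r]$, Minkowski's inequality applied to the i.i.d. sum gives
\begin{equation}
(\mathbb{E}[|\zeta^{(n)}_1|^p])^{1/p} \leq \frac{1}{n}\sum_{j=1}^{n}(\mathbb{E}[|\xi^{(j)}_1|^p])^{1/p} = (\mathbb{E}[|\xi^{(1)}_1|^p])^{1/p}<\infty.
\end{equation}
Hence Theorem~\ref{thm:genericARMA} applies to \eqref{e:empirical} and produces the lower bound $|\cQ^t(x-(I-\cQ)^{-1}\mathbb{E}[\Sigma\zeta^{(n)}_1])|$ together with the two upper bounds, where the equilibrium limit, obtained via \eqref{eq:limite} for the averaged noise, is $S^{(n)}_\infty \stackrel{\textsf{d}}{=} \sum_{j=0}^\infty \cQ^j \Sigma \zeta^{(n)}_j$. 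A direct rearrangement of the defining sum shows that this limit coincides in distribution (in fact pathwise when the noise families are realized on a common probability space with synchronous coupling) with $\tfrac{1}{n}\sum_{j=1}^n X^{(j)}_\infty$, justifying the notation used in the statement.

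Finally, to arrive at the explicit form in the second upper bound I would estimate the noise moment by the matrix submultiplicativity $|\Sigma y|\leq \|\Sigma\|_F |y|$ (Lemma~\ref{lem:*}\,(3) and Cauchy--Schwarz) followed by the Minkowski bound on $\zeta^{(n)}_1$, yielding
\begin{equation}
(\mathbb{E}[|\Sigma\zeta^{(n)}_1|^p])^{1/p}\leq \|\Sigma\|_F (\mathbb{E}[|\zeta^{(n)}_1|^p])^{1/p} \leq \|\Sigma\|_F (\mathbb{E}[|\xi^{(1)}_1|^p])^{1/p},
\end{equation}
which, plugged into the second branch of \eqref{e:genericARMA} applied to $(S^{(n)}_t)$, produces precisely the stated right-hand side. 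The first upper bound is literally the first branch of \eqref{e:genericARMA} with $X_\infty$ replaced by $S^{(n)}_\infty$. There is no genuine obstacle here: the corollary is a near-immediate specialization of Theorem~\ref{thm:genericARMA}, and the only care-point is the Minkowski averaging that ensures the moment hypothesis is inherited uniformly in $n$.
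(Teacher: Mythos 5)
Your proof follows essentially the same route as the paper: verify the averaged recurrence \eqref{e:empirical} by linearity, observe that the moment hypothesis is inherited via Minkowski's inequality $(\mathbb{E}[|\zeta^{(n)}_1|^p])^{1/p}\leq(\mathbb{E}[|\xi^{(1)}_1|^p])^{1/p}$, and then specialize Theorem~\ref{thm:genericARMA}. Your write-up is if anything slightly more explicit than the paper's, spelling out the $\|\Sigma\|_F$ factorization and the identification of $S^{(n)}_\infty$, which the paper leaves implicit.
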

\begin{proof}[Proof of Corollary \ref{cor:empirical}]
Equation~\ref{e:empirical} is satisfied by direct calculation due to the linearity. 
Hence, Theorem~\ref{thm:genericARMA} implies the statement. It only remains to estimate 
$(\mathbb{E}[|\zeta^{(n)}_1|^p])^{1/p}$. Note that $\mathbb{E}[\zeta^{(n)}_t]=\mathbb{E}[ \xi^{(1)}_1]$ and using  the Minkowski inequality we have
\[
(\mathbb{E}[|\zeta^{(n)}_1|^p])^{1/p}\leq 
(\mathbb{E}[|\xi_1^{(1)}|^p])^{1/p}\quad \textrm{ for all }\quad n\in \mathbb{N}.
\]
\end{proof}

\medskip 
\begin{remark}
In case of $\cQ$ being additionally diagonalizable, the bounds of Corollary~\ref{cor:empirical} can be sharpened with the help of Corollary~\ref{cor:genericARMA}.
\end{remark}

\bigskip 
\subsection{\textbf{The asymptotics of Schur stable matrix power projections}}\label{ss:matrixpowerprojection}
For completeness we provide the following non-asymptotic estimates for the matrix powers of Jordan matrices. It is a basic result in Linear Algebra, that for any $\cQ\in \mathbb{R}^{d\times d}$ there exists the Jordan normal form of $J_{\mathcal{Q}}$, that is, there exists an invertible matrix $P$ such that 
\[
P \mathcal{Q} P^{-1} = J_{\mathcal{Q}}, 
\]
where $J_{\mathcal{Q}} = \mathsf{diag}(J_{d_1}(\lambda_1), \dots J_{d_\ell}(\lambda_\ell))$, $\ell\in  \{1, \dots, d\}$, $\lambda_1,\dots, \lambda_d\in \mathbb{C}$ y $\sum_{i=1}^\ell d_i = d$, 
see \cite[Theorem~3.1.11]{HornJohnson}. Moreover for any $t\in \mathbb{N}$ 
\[
\mathcal{Q}^t = P^{-1} J_{\mathcal{Q}}^t P = P^{-1}~ \mathsf{diag}(J_{d_1}(\lambda_1)^t, \dots J_{d_\ell}(\lambda_\ell)^t)\,P. 
\]
In this section we complete the quantification of the ergodic bounds in the main results of this article in Theorem~\ref{thm:affine}, Theorem~\ref{thm:genericARMA} and the respective corollaries. 
For this sake we show quantitative bounds on $|\mathcal{Q}^t x|$, $x\neq 0$, for $\mathcal{Q}$ having Jordan normal. 

\medskip 
\begin{remark}
Clearly, for $\mathcal{Q}\in \mathbb{R}^{d\times d}$ being a single Jordan block, 
the eigenvalue is necessarily real (complex eigenvalues always come in pairs of complex conjugate pairs). 
However, this setting makes perfectly sense for $\mathcal{Q}\in \mathbb{C}^{d\times d}$. In fact, this result is the main building block for the general result in $\mathcal{Q}\in \mathbb{R}^{d\times d}$ with more than one Jordan block, for which complex eigenvalues may appear. 

\end{remark}

\begin{lemma}[Non-asymptotic estimates of the matrix power of a Jordan block]\hfill\\
\label{lem:jordanlast}
Assume $\mathcal{Q}\in \mathbb{C}^{d\times d}$ consists of a single Jordan block with eigenvalue $q_1 = q = r e^{i\theta}\neq 0$.  
Then for $t\geq 2(d-1)$ and $x\in \mathbb{R}^d$ with $x_d \neq 0$ we have 
\begin{equation}\label{e:Jaraquant}
\begin{split}
&\Big|\frac{\mathcal{Q}^t x}{ |q|^{t-(d-1)} \binom{t}{d-1} } -x_{d} e^{i(t-(d-1))\theta}  e_1^T\Big|\\
&\leq \frac{d-1}{t-d+2} \sqrt{ \Big(\sum_{j=0}^{d-2} |q|^{(d-1)-j} x_{j+1}\Big)^2 + \sum_{k=2}^{d} \Big(\sum_{j=0}^{d-k}  |q|^{(d-1)-j} x_{j+k}\Big)^2}, 
\end{split}
\end{equation}
where $e_1$ is the first canonical unit vector in $\mathbb{R}^d$. 
\end{lemma}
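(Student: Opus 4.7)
The plan is to expand $\mathcal{Q}^t$ by the binomial theorem, isolate the leading Jordan contribution, and then bound the residual using a monotonicity estimate for binomial coefficient ratios.

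First, I would write $\mathcal{Q} = q I_d + N$ where $N$ is the nilpotent upper shift with $N e_j = e_{j-1}$ for $j \geq 2$, $N e_1 = 0$ and $N^d = 0$. Since $q I_d$ and $N$ commute, for $t \geq d-1$ the truncated binomial expansion
\[
\mathcal{Q}^t = \sum_{k=0}^{d-1} \binom{t}{k} q^{t-k} N^k
\]
is valid. Combined with the identity $(N^k x)_\ell = x_{\ell+k}$ when $\ell + k \leq d$ and zero otherwise, the $\ell$-th component of $\mathcal{Q}^t x$ equals $\sum_{k=0}^{d-\ell} \binom{t}{k} q^{t-k} x_{\ell+k}$. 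At $\ell = 1$, the summand with $k = d-1$ is exactly $\binom{t}{d-1} q^{t-(d-1)} x_d$, which after dividing by $|q|^{t-(d-1)} \binom{t}{d-1}$ and factoring $q^{t-k} = |q|^{t-k} e^{i(t-k)\theta}$ reproduces the dominant $x_d \, e^{i(t-(d-1))\theta}$ that is subtracted in the statement; for $\ell \geq 2$ the summation range is truncated at $d-\ell \leq d-2$, so no such dominant term is present.

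Next, I would prove the binomial ratio estimate: for every $t \geq 2(d-1)$ and every $0 \leq k \leq d-2$,
\[
\frac{\binom{t}{k}}{\binom{t}{d-1}} \leq \frac{d-1}{t-d+2}.
\]
The elementary recursion $\binom{t}{k}/\binom{t}{k-1} = (t-k+1)/k$ is at least one whenever $k \leq (t+1)/2$, and under the hypothesis $t \geq 2(d-1)$ this condition covers every $k \leq d-1$. Consequently $k \mapsto \binom{t}{k}$ is non-decreasing on $\{0, \ldots, d-1\}$, so the ratios $\binom{t}{k}/\binom{t}{d-1}$ attain their maximum at $k = d-2$, which yields the displayed constant.

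Finally, combining these ingredients with the entrywise triangle inequality (using $|e^{i\alpha}| = 1$), I would obtain, for each $\ell \in \{1, \ldots, d\}$,
\[
\Bigl|\Bigl(\frac{\mathcal{Q}^t x}{|q|^{t-(d-1)}\binom{t}{d-1}} - x_d\, e^{i(t-(d-1))\theta}\, e_1\Bigr)_\ell\Bigr| \leq \frac{d-1}{t-d+2} \sum_{k=0}^{d-\ell^\star} |q|^{(d-1)-k}\,|x_{\ell+k}|,
\]
with $\ell^\star = 2$ if $\ell = 1$ and $\ell^\star = \ell$ otherwise. Squaring, summing over $\ell$ and taking the Euclidean square root delivers the announced inequality; the entries $x_{j+\cdot}$ inside the squared inner sums of the statement arise as absolute values from the triangle step and agree with their signed versions whenever no cancellation of signs occurs. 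The main obstacle, in my view, is the coherent bookkeeping: simultaneously tracking the complex phases $e^{-ik\theta}$, the shifted indices $\ell + k$, the per-entry truncated summation ranges, and the precise threshold $t \geq 2(d-1)$ that activates the binomial monotonicity. Once these pieces are properly organized, the remaining estimate is purely algebraic.
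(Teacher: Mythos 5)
Your proposal follows essentially the same route as the paper's own proof: expand the Jordan block power via the truncated binomial theorem, read off the component formula $(\mathcal{Q}^t x)_\ell = \sum_{k=0}^{d-\ell}\binom{t}{k} q^{t-k} x_{\ell+k}$, subtract the dominant $k=d-1$ contribution in the first entry, and control the residual by the monotonicity of $k\mapsto\binom{t}{k}$ on $\{0,\dots,d-1\}$ under $t\geq 2(d-1)$. The paper factors out $\binom{t}{d-2}/\binom{t}{d-1}=\frac{d-1}{t-d+2}$ and then uses $\binom{t}{j}/\binom{t}{d-2}\leq 1$, whereas you bound $\binom{t}{k}/\binom{t}{d-1}$ directly; these are equivalent. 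Your explicit caveat about the absolute values $|x_{j+k}|$ introduced by the triangle step is well placed — the paper's displayed right-hand side writes $x_{j+k}$ without absolute values and asserts an equality after taking $\sup_t$, which glosses over both the residual complex phases $e^{i((d-1)-j)\theta}$ and possible sign cancellations; your formulation is the more careful one.
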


\medskip 
\begin{remark}
Note that 
\begin{align*}
&\lim_{t\to\infty} \Big|\Big(\sum_{j=0}^{d-2} \frac{\binom{t}{j}}{\binom{t}{d-2}}  q^{(d-1)-j} x_{j+1}, 
\sum_{j=0}^{d-2} \frac{\binom{t}{j}}{\binom{t}{d-2}} q^{(d-1)-j} x_{j+2}, \dots, 
\sum_{j=0}^{d-d} \frac{\binom{t}{j}}{\binom{t}{d-2}} q^{(d-1)-j}x_{j+d}\Big)^T\Big|\\
&= |q| \sqrt{ x_{d-1}^2 +  x_{d}^2}. 
 \end{align*}
The preceding limit illustrates, that whenever $x_d\neq 0$ the error term on the right-hand side of \eqref{e:Jaraquant} is non-zero with the asymptotic rate of order $t^{-1}$, since $x_d$ appears for $k=2$ and $j= d-2$.  
\end{remark}

\begin{corollary}\label{cor:Jara1}
Assume $\mathcal{Q}\in \mathbb{C}^{d\times d}$ consists of a single Jordan block with eigenvalue $q_1 = q = r e^{i\theta} \neq 0$, $\theta \in [0, 2\pi)$.  
Then for $x\in \mathbb{R}^d$, $x\neq 0$, and $j_* := \max\{j\in \{1, \dots, d\}~|~|x_j|> 0\}$ we have 
for $t\geq \max\{d-1, 2(j_*-2)\}$
\begin{equation}\label{e:Jaraquant2}
\begin{split}
&\Big|\frac{\mathcal{Q}^t x}{ |q|^{t-(j_*-1)} \binom{t}{j_*-1} } -x_{j_*} e^{i(t-(j_*-1))\theta}  e_{d-j_*+1}^T\Big|\\
&\leq \frac{j_*-1}{t-j_*+2} \sqrt{ \Big(\sum_{j=0}^{j_*-2} |q|^{(j_*-1)-j} x_{j+1}\Big)^2 + \sum_{k=2}^{j_*} \Big(\sum_{j=0}^{j_*-k}  |q|^{(j_*-1)-j} x_{j+k}\Big)^2}, 
\end{split}
\end{equation}
where $e_1, \dots, e_d$ is the canonical basis of $\mathbb{R}^d$. 
\end{corollary}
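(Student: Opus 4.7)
The plan is to reduce Corollary~\ref{cor:Jara1} to Lemma~\ref{lem:jordanlast} by exploiting the fact that $\mathcal{Q}$ is upper triangular and that all components of $x$ beyond index $j_*$ vanish. The underlying observation is that, writing $\mathcal{Q} = q I_d + N$ with $N$ the nilpotent shift satisfying $N e_i = e_{i-1}$ for $i > 1$ (and $N e_1 = 0$), any vector supported on the first $j_*$ coordinates is mapped by $\mathcal{Q}$ into a vector with the same support.

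First I would verify that $V := \mathrm{span}\{e_1, \ldots, e_{j_*}\}$ is $\mathcal{Q}$-invariant: since $x \in V$ by the definition of $j_*$, one has $\mathcal{Q}^t x \in V$ for every $t \geq 0$. Denoting by $\iota \colon \mathbb{C}^{j_*} \to \mathbb{C}^d$ the canonical zero-padding embedding and by $J_{j_*}(q)$ the Jordan block of size $j_*$ with eigenvalue $q$, the restriction of $\mathcal{Q}$ to $V$ corresponds under $\iota$ to $J_{j_*}(q)$, so that
\begin{equation*}
\mathcal{Q}^t x \;=\; \iota\bigl(J_{j_*}(q)^t y\bigr), \qquad y := (x_1, \ldots, x_{j_*})^T \in \mathbb{R}^{j_*}.
\end{equation*}
Since $\iota$ is an isometry onto its image, every Euclidean norm estimate obtained in $\mathbb{C}^{j_*}$ transfers verbatim to $\mathbb{C}^d$.

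Second, I would apply Lemma~\ref{lem:jordanlast} with $d$ replaced by $j_*$ and $x$ replaced by $y$. The hypothesis $y_{j_*} = x_{j_*} \neq 0$ holds by the choice of $j_*$, and the dimensional requirement $t \geq 2(j_* - 1)$ from the lemma is implied by the stronger assumption $t \geq \max\{d-1, 2(j_*-2)\}$ imposed in the corollary. Substituting $y_i = x_i$ for $1 \leq i \leq j_*$ into the error term delivered by the lemma reproduces precisely the right-hand side of~\eqref{e:Jaraquant2}, since all indices $j+1$ and $j+k$ that appear in the error sums are bounded by $j_*$.

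The main subtlety I anticipate is the bookkeeping of the leading direction. Lemma~\ref{lem:jordanlast} produces a leading vector proportional to $e_1 \in \mathbb{R}^{j_*}$, which under $\iota$ must be identified with the corresponding canonical vector of $\mathbb{R}^d$ (following the Jordan-chain indexing convention fixed in the section) so as to match the form stated in \eqref{e:Jaraquant2}. Beyond this notational care, the proof is a direct lift of Lemma~\ref{lem:jordanlast} through $\iota$, with no additional analytic input required.
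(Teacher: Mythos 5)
Your reduction to Lemma~\ref{lem:jordanlast} via the $\mathcal{Q}$-invariant subspace $V=\mathrm{span}\{e_1,\ldots,e_{j_*}\}$ is a legitimate alternative route: the paper instead re-runs the explicit binomial computation of $\mathcal{Q}^t x$ with the sums truncated at $j_*$, rather than invoking the lemma as a black box. However, the step where you assert that the lemma's hypothesis is implied by the corollary's is false. After the substitution $d\mapsto j_*$, Lemma~\ref{lem:jordanlast} (as stated) demands $t\ge 2(j_*-1)$, whereas the corollary only assumes $t\ge\max\{d-1,\,2(j_*-2)\}$. Since $2(j_*-2)<2(j_*-1)$ always, the implication would require $d-1\ge 2(j_*-1)$, i.e.\ $j_*\le(d+1)/2$; it fails whenever $j_*>(d+1)/2$, and in particular in the basic case $j_*=d\ge 2$, where the corollary asks for $t\ge\max\{d-1,2(d-2)\}$ but the lemma gives nothing below $t\ge 2(d-1)$. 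To close this you cannot cite the lemma verbatim: you would need to repeat the binomial-monotonicity argument inside $V\cong\mathbb{C}^{j_*}$ and observe that it actually needs only $t\ge 2(j_*-2)$ (together with $t\ge j_*-1$ so the full expansion of $J_{j_*}(q)^t y$ is in force), which is strictly weaker than the $2(d-1)$ written in the lemma and \emph{is} covered by the corollary's hypothesis. As it stands, the appeal to the lemma only proves the corollary on a strictly smaller range of $t$.

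On the leading direction: your embedding $\iota$ (zero-padding in the trailing coordinates) necessarily sends $e_1\in\mathbb{R}^{j_*}$ to $e_1\in\mathbb{R}^d$, so your argument produces $x_{j_*}e^{i(t-(j_*-1))\theta}e_1^T$, not $e_{d-j_*+1}^T$ as in \eqref{e:Jaraquant2}. There is no ``Jordan-chain indexing convention'' that reconciles this: for the upper-triangular convention implicit in the power formula $\big(\mathcal{Q}^t x\big)_i=\sum_{j}\binom{t}{j}q^{t-j}x_{j+i}$, the space $\mathrm{span}\{e_{d-j_*+1},\ldots,e_d\}$ is \emph{not} $\mathcal{Q}$-invariant, while $\mathrm{span}\{e_1,\ldots,e_{j_*}\}$ is. In fact the displayed computation inside the paper's own proof also lands on $e_1$ (and on $x_{j_*}$, not $x_d$); the $e_{d-j_*+1}$ in the statement of the corollary is inconsistent with that computation (test $j_*=1$: then $\mathcal{Q}^t x=q^tx_1e_1$ yet the statement would predict direction $e_d$). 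So your $e_1$ is the defensible answer, but you should say explicitly that the stated direction is incorrect rather than wave at an unstated convention.
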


\medskip 
\begin{remark}
We point out that in~\eqref{e:Jaraquant} and~\eqref{e:Jaraquant2} the respective vectors 
\[x_{d} e^{i(t-(d-1))\theta}  e_1^T, \quad 
x_{j_*} e^{i(t-(j_*-1))\theta}  e_{d-j_*+1}^T, 
\]
do not have a limit for $\theta \in (0, 2\pi)$ as $t\to \infty$, however, their respective norms are constant, that is to say, 
\[
|x_{d} e^{i(t-(d-1))\theta}  e_1^T| = |x_{d}| >0,\quad  |x_{j_*} e^{i(t-(j_*-1))\theta}  e_{d-j_*+1}^T| = |x_{j_*}|>0.
\] Therefore, the asymptotic behaviors of 
$|\mathcal{Q}^t x|$ are given by 
\[
|q|^{t-(j_*-1)} \binom{t}{j_*-1} |x_{d}|,\quad |q|^{t-(j_*-1)} \binom{t}{j_*-1} |x_{j_*}|.
\]
\end{remark}

\begin{proof}[Proof of Lemma~\ref{lem:jordanlast}]
Recall that for $t\geq d-1$ 
\begin{align*}
\mathcal{Q}^t x 
&=  \Big(\sum_{j=0}^{d-1} \binom{t}{j} q^{t-j} x_{j+1}, 
\sum_{j=0}^{d-2} \binom{t}{j} q^{t-j} x_{j+2}, \dots, 
\sum_{j=0}^{d-d} \binom{t}{j} q^{t-j}x_{j+d}\Big)^T.
\end{align*}
By definition of $j_*$ we have 
\begin{align*}
\mathcal{Q}^t x 
&=  \Big(\sum_{j=0}^{j_*-1} \binom{t}{j} q^{t-j} x_{j+1}, 
\sum_{j=0}^{j_*-2} \binom{t}{j} q^{t-j} x_{j+2}, \dots, 
\sum_{j=0}^{j_*-d} \binom{t}{j} q^{t-j}x_{j+d}\Big)^T, 
\end{align*}
where the sum $\sum_0^{k}$, $k<0$, is defined to be empty. 
Then for $t\geq d-1$ we have 
\begin{align*}
&\Big|\frac{\mathcal{Q}^t x}{ |q|^{t-(j_*-1)} \binom{t}{j_*-1} } -x_{d} e^{i(t-(j_*-1))\theta}  e_1^T\Big|\\
&= \frac{\binom{t}{j_*-2}}{\binom{t}{j_*-1}} 
\Big|\Big(\sum_{j=0}^{j_*-2} \frac{\binom{t}{j}}{\binom{t}{j_*-2}}  q^{(j_*-1)-j} x_{j+1}, 
\sum_{j=0}^{j_*-2} \frac{\binom{t}{j}}{\binom{t}{j_*-2}} q^{(j_*-1)-j} x_{j+2}, \dots, 
\sum_{j=0}^{j_*-d} \frac{\binom{t}{j}}{\binom{t}{j_*-2}} q^{(j_*-1)-j}x_{j+j_*}\Big)^T\Big|.
\end{align*}
Note that due to monotonicity we have for $t\geq  2 (j_*-2)$ that 
\[ 
\frac{\binom{t}{j}}{\binom{t}{j_*-2}}\leq 1 \quad j=0, \dots, j_*-2. 
\]
Hence 
\begin{align*}
&\sup_{t\geq  2 (j_*-2)} \Big|\Big(\sum_{j=0}^{j_*-2} \frac{\binom{t}{j}}{\binom{t}{j_*-2}}  q^{(j_*-1)-j} x_{j+1}, 
\sum_{j=0}^{j_*-2} \frac{\binom{t}{j}}{\binom{t}{j_*-2}} q^{(j_*-1)-j} x_{j+2}, \dots, 
\sum_{j=0}^{j_*-d} \frac{\binom{t}{j}}{\binom{t}{j_*-2}} q^{(j_*-1)-j}x_{j+j_*}\Big)^T\Big|\\ 
&= \sqrt{ \Big(\sum_{j=0}^{j_*-2} |q|^{(j_*-1)-j} x_{j+1}\Big)^2 + \sum_{k=2}^{j_*} \Big(\sum_{j=0}^{j_*-k}  |q|^{(j_*-1)-j} x_{j+k}\Big)^2}.
\end{align*}
\end{proof}

\medskip 

\begin{remark}
Assume $\mathcal{Q}\in \mathbb{R}^{d\times d}$, $d= 2N$, $N\in \mathbb{N}$, consists of a single pair of Jordan blocks with eigenvalue complex-valued $q_\pm = r e^{\pm i\theta} \neq 0$, $\theta \in [0, 2\pi)$.  
Observe if $x\neq 0$, $x = (x_{+},x_{-})$, with $x_{\pm}\in \mathbb{R}^N$. If $x_{+}= 0$ or $x_{-} = 0$, then 
the computation of $Q^t x$ is reduced to the case of Corollary~\ref{cor:Jara1}. Hence in the subsequent lemma we assume that $x\neq 0$ and $x_{+}\neq 0$ or $x_{-} \neq 0$. 
Define 
\begin{equation}\label{d:jmasmenos}
\begin{split}
 j_+ &:= \{\max\{j\in \{1, \dots, N\}~|~|x_{+,j}|> 0\},\\
 j_- &:= \{\max\{j\in \{1, \dots, N\}~|~|x_{-,j}|> 0\},\\
 j_* &:= \max\{j_+, j_-\}.
 \end{split}
\end{equation}
\end{remark}

\medskip 
\begin{corollary}
Assume $\mathcal{Q}\in \mathbb{C}^{d\times d}$, $d= 2N$, $N\in \mathbb{N}$, consists of a single pair of Jordan blocks with eigenvalue complex-valued $q_\pm = r e^{\pm i\theta} \neq 0$, $\theta \in [0, 2\pi)$.  
Then for $x\in \mathbb{R}^d$, $x = (x_+, x_-)$, $x_+, x_-\neq 0$, and 
$j_*$ defined in \eqref{d:jmasmenos} we have 
for $t\geq \max\{d-1, 2(j_+-2)\}$
\begin{equation}\label{e:Jaraquant232}
\begin{split}
&\Big|\frac{\mathcal{Q}^t x}{ |q|^{t-(j_+-1)} \binom{t}{j_+-1} } 
-\Big(\mathbf{1}\{j_*=j_+ \} x_{+,j_*}   e^{i(t-(j_*-1))\theta}  e_{N-j_*+1}^T + \mathbf{1}\{j_* = j_-\}x_{-,j_*} e^{-i(t-(j_*-1))\theta}  e_{2N-j_*+1}^T\Big)\Big|\\
&\leq \frac{j_+-1}{t-j_++2} \sqrt{ \Big(\sum_{j=0}^{j_+-2} |q|^{(j_+-1)-j} x_{j+1}\Big)^2 + \sum_{k=2}^{j_+} \Big(\sum_{j=0}^{j_+-k}  |q|^{(j_+-1)-j} x_{j+k}\Big)^2}, 
\end{split}
\end{equation}
where $e_1, \dots, e_d$ is the canonical basis of $\mathbb{R}^d$. 
\end{corollary}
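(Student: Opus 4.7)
The plan is to reduce the statement to the single Jordan block case already treated in Corollary~\ref{cor:Jara1} via the block-diagonal structure. Since $\mathcal{Q}$ consists of a single pair of Jordan blocks with eigenvalues $q_{\pm}=re^{\pm i\theta}$, in the canonical basis $e_1,\ldots,e_{2N}$ the matrix has the form $\mathcal{Q}=\mathrm{diag}(J_N(q_+), J_N(q_-))$, so that $\mathcal{Q}^t=\mathrm{diag}(J_N(q_+)^t, J_N(q_-)^t)$. Writing $x=(x_+,x_-)$ with $x_\pm\in\mathbb{C}^N$, this gives the clean splitting
\begin{equation}
\mathcal{Q}^t x = \big(J_N(q_+)^t x_+,\; J_N(q_-)^t x_-\big) \in \mathbb{C}^{2N},
\end{equation}
so the two halves can be analyzed independently.

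First I would apply Corollary~\ref{cor:Jara1} to the $(+)$-block with dimension $N$ and index $j_+$. After normalizing by $|q|^{t-(j_+-1)}\binom{t}{j_+-1}$, the leading term is $x_{+,j_+}e^{i(t-(j_+-1))\theta}e_{N-j_++1}$ in $\mathbb{C}^N$, with an error of order $(j_+-1)/(t-j_++2)$ times the stated square root expression. Second I would apply the same corollary to the $(-)$-block with index $j_-$ and eigenvalue $q_-=re^{-i\theta}$; the leading term there is $x_{-,j_-}e^{-i(t-(j_--1))\theta}e_{N-j_-+1}$ in $\mathbb{C}^N$, which sits in coordinates $N+1,\ldots,2N$ of the ambient $\mathbb{R}^{2N}$, hence becomes $x_{-,j_-}e^{-i(t-(j_--1))\theta}e_{2N-j_-+1}$ after re-embedding.

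Next I would compare the two blocks against the common normalization $|q|^{t-(j_+-1)}\binom{t}{j_+-1}$. Using $|q_+|=|q_-|=|q|$ and $\binom{t}{k}\sim t^k/k!$ as $t\to\infty$, whenever $j_+ < j_-$ the ratio $\binom{t}{j_+-1}/\binom{t}{j_--1}\to 0$, so the $(+)$-block becomes negligible and only the $(-)$-term survives (giving the $\mathbf{1}\{j_*=j_-\}$ contribution); symmetrically for $j_+>j_-$; and when $j_+=j_-=j_*$ both dominant vectors survive. The indicators $\mathbf{1}\{j_*=j_\pm\}$ precisely encode these three cases. The combined leading vector in the statement then follows by adding the two approximations and the overall error is dominated by the bound from the slower-decaying block, which is exactly the right-hand side of \eqref{e:Jaraquant232}.

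The main obstacle I anticipate is bookkeeping rather than analysis: (i) tracking the re-embedding $e_k^{(N)}\mapsto e_{N+k}^{(2N)}$ for the $(-)$-block and verifying that $N+(N-j_-+1)=2N-j_-+1$; (ii) carefully bounding the subdominant block after division by the chosen normalization (using monotonicity of $\binom{t}{k}$ for $t\geq 2(j_*-2)$, already exploited in the proof of Lemma~\ref{lem:jordanlast}); and (iii) checking that the constants $x_{+,j+1},\ldots,x_{+,j+k}$ appearing in the square root of Corollary~\ref{cor:Jara1} can, after the block-diagonal decomposition, be absorbed into the single expression written in terms of the global coordinates of $x$, so that the stated error bound is uniform across the three cases $j_+<j_-$, $j_+=j_-$, $j_+>j_-$.
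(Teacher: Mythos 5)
Your block-diagonal reduction to Corollary~\ref{cor:Jara1} is the natural starting point, and the re-embedding bookkeeping (the index shift $N+(N-j_-+1)=2N-j_-+1$) is correct. However, there is a genuine logical error in your treatment of the case $j_+<j_-$, and the argument does not actually close.

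You claim that, after dividing by $|q|^{t-(j_+-1)}\binom{t}{j_+-1}$, when $j_+<j_-$ ``the $(+)$-block becomes negligible and only the $(-)$-term survives.'' This is backwards. That normalization is \emph{exactly} the scale of the $(+)$-block, so the $(+)$-block's leading term converges (in modulus) to the nonzero constant $|x_{+,j_+}|$, and is not negligible. Meanwhile the $(-)$-block scales like $|q|^{t-(j_--1)}\binom{t}{j_--1}$, so after dividing by $|q|^{t-(j_+-1)}\binom{t}{j_+-1}$ its contribution behaves like $|q|^{j_+-j_-}\binom{t}{j_--1}/\binom{t}{j_+-1}\to\infty$. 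Hence for $j_+<j_-$ the left-hand side of \eqref{e:Jaraquant232} is unbounded and the stated inequality cannot hold as written. Your heuristic that the $(+)$-block vanishes is valid only for a normalization by $|q|^{t-(j_*-1)}\binom{t}{j_*-1}$, i.e.\ using $j_*=\max\{j_+,j_-\}$ rather than $j_+$; you are implicitly reasoning with $j_*$ but the statement uses $j_+$. A correct proof must either flag this and prove the statement with $j_*$ replacing $j_+$ in the normalization (and in the condition $t\geq 2(j_*-2)$ and the error constant), or else restrict to $j_+\geq j_-$.

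A second gap: the right-hand side of \eqref{e:Jaraquant232} involves only the coordinates $x_1,\dots,x_{j_+}$, and since $j_+\leq N$ these are all coordinates of $x_+$. Your proof never verifies that this dominates the residual of the $(-)$-block. In the dominant case $j_+>j_-$, the entire (rescaled) vector $J_N(q_-)^t x_-/\big(|q|^{t-(j_+-1)}\binom{t}{j_+-1}\big)$ must be absorbed into the error, not merely the $(-)$-block's intrinsic error from Corollary~\ref{cor:Jara1}; this is a different estimate. You would need, say, a bound of the form
\begin{equation}
\frac{\big|J_N(q_-)^t x_-\big|}{|q|^{t-(j_+-1)}\binom{t}{j_+-1}}
\leq |q|^{\,j_+-j_-}\,\frac{\binom{t}{j_--1}}{\binom{t}{j_+-1}}\,C(x_-),
\end{equation}
together with the observation that $\binom{t}{j_--1}/\binom{t}{j_+-1}=O\big(t^{-(j_+-j_-)}\big)=O(t^{-1})$, and then show this is dominated by the stated right-hand side. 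Both the divergence issue and the missing error contribution need to be addressed before the proof is complete.
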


\bigskip 
\subsection{\textbf{Numerical experiments}}\label{ss:numerical}

In this section, we experimentally validate the obtained theoretical bounds in the previous sections. 

\subsubsection{\textbf{Autoregressive Moving-Average Process $\mathsf{ARMA}(2,1)$}}

\medskip
    
\noindent Consider an $\mathsf{ARMA}(2,1)$ as defined in \eqref{d:arma} with parameters $\varphi_1 = 0.6$, $\varphi_2 = 0.3$, and $\theta_1 = 0.5$ and standard normal perturbations. The model is embedded in a 3-dimensional state-space using the following augmented companion matrix
    \[
    \mathcal{Q} = \begin{pmatrix}
        0.6 & 0.3 & 0.5 \\
        1 & 0 & 0 \\
        0 & 0 & 0
    \end{pmatrix}.
    \]
The initial state is given by $x = (1, -1, 1.5)$ and the slicing vector is given by $v = (1, 1, 1)$. For the matrix $\mathcal{Q}$ the associated eigenvalues are
    \[
        \lambda_1 \approx 0.9245,  \qquad \lambda_2 \approx -0.3245,  \qquad \lambda_3 = 0.
    \]  
Figure \ref{fig:arma} compares the precise values of the Wasserstein-2 distance with the upper and lower bounds obtained in 
Theorem \ref{th:affineGauss}.
    
    \begin{figure}[H]
        \centering
        \includegraphics[width=0.85\textwidth]{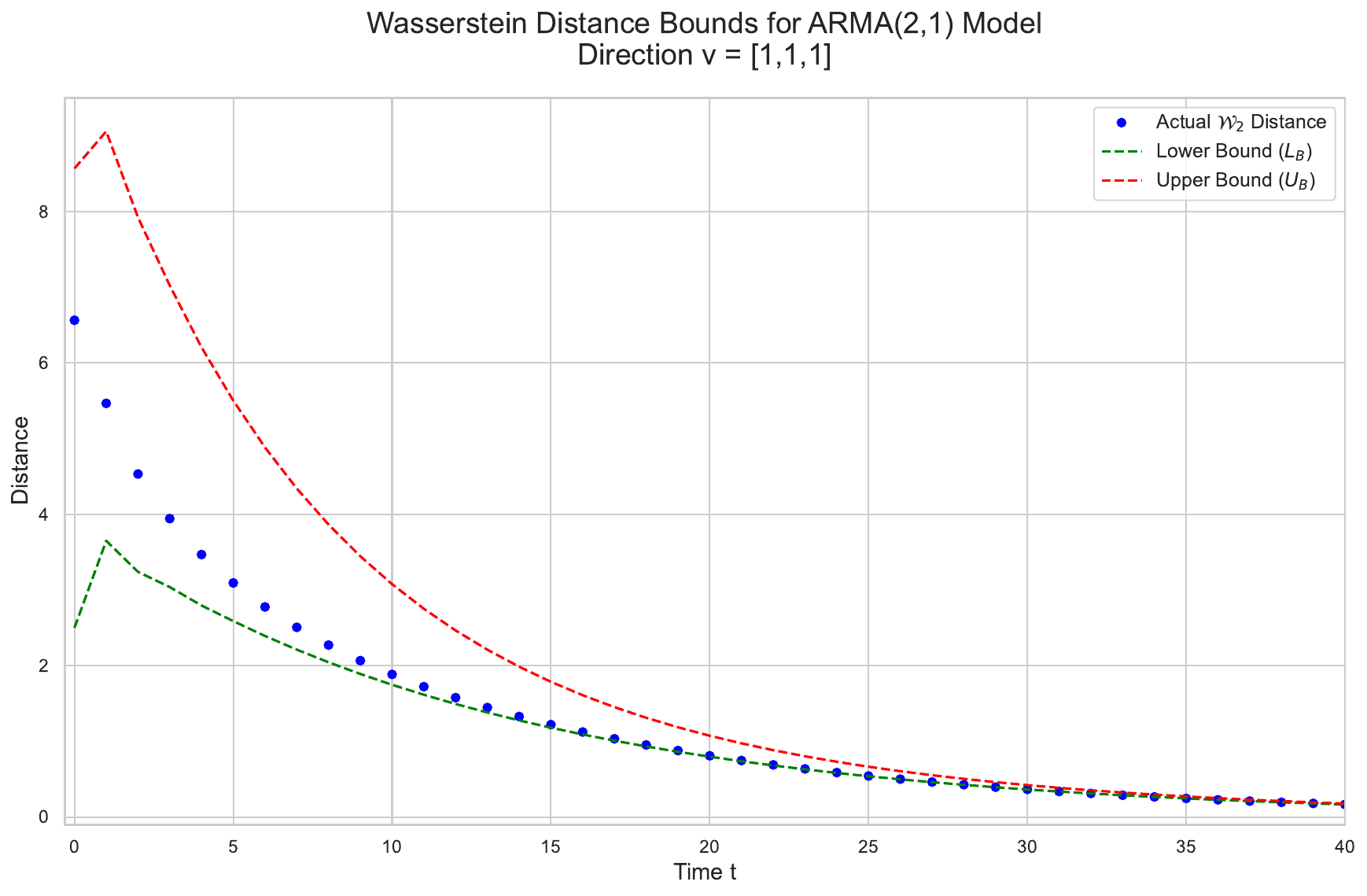}
        \caption{$\mathcal{W}_2$ distance bounds for the $\mathsf{ARMA}(2,1)$ system.}
        \label{fig:arma}
    \end{figure}

\bigskip
\subsubsection{\textbf{The gradient case: diagonal and symmetric systems}}

\hfill\\
\begin{figure}[h!]
    \begin{minipage}{.4\textwidth} 
        \textbf{Decoupled interaction matrix $\mathcal{Q}$:} 
        \[
    \mathcal{Q} = \begin{pmatrix}
        0.8 & 0 & 0 \\
        0 & 0.5 & 0 \\
        0 & 0 & -0.3
    \end{pmatrix}
        \]
        with initial state  $x = (2, -1, 1.5)$ and slicing vector $v = (1, 1, 1)$. 
    \end{minipage}
    \hfill 
    \begin{minipage}{.4\textwidth} 
        \textbf{Symmetrically coupled matrix $\mathcal{Q}$:}  
    \begin{equation*}
    \begin{split}
        \mathcal{Q} &= \frac{1}{90}\begin{pmatrix}
            49 & -28 & -10 \\
            -28 & 25 & -38 \\
            -10 & -38 & 16
        \end{pmatrix} \\
        &\approx \begin{pmatrix}
            0.5444 & -0.3111 & -0.1111 \\
            -0.3111 & 0.2778 & -0.4222 \\
            -0.1111 & -0.4222 & 0.1778
        \end{pmatrix}
    \end{split}
\end{equation*}
    with initial state $x = (2, -1, 1.5)$ and a slicing vector $v = (1, 1, 1)$. The eigenvalues of $\mathcal{Q}$ are given by
\[
        \lambda_1 = 0.8,\qquad \lambda_2 = 0.5, \qquad \lambda_3 = -0.3.
\]
    \end{minipage} 
\end{figure}
\hfill 
    
    \begin{figure}[H]
 \begin{minipage}{.48\textwidth} 
      \centering
        \includegraphics[width=0.85\textwidth]{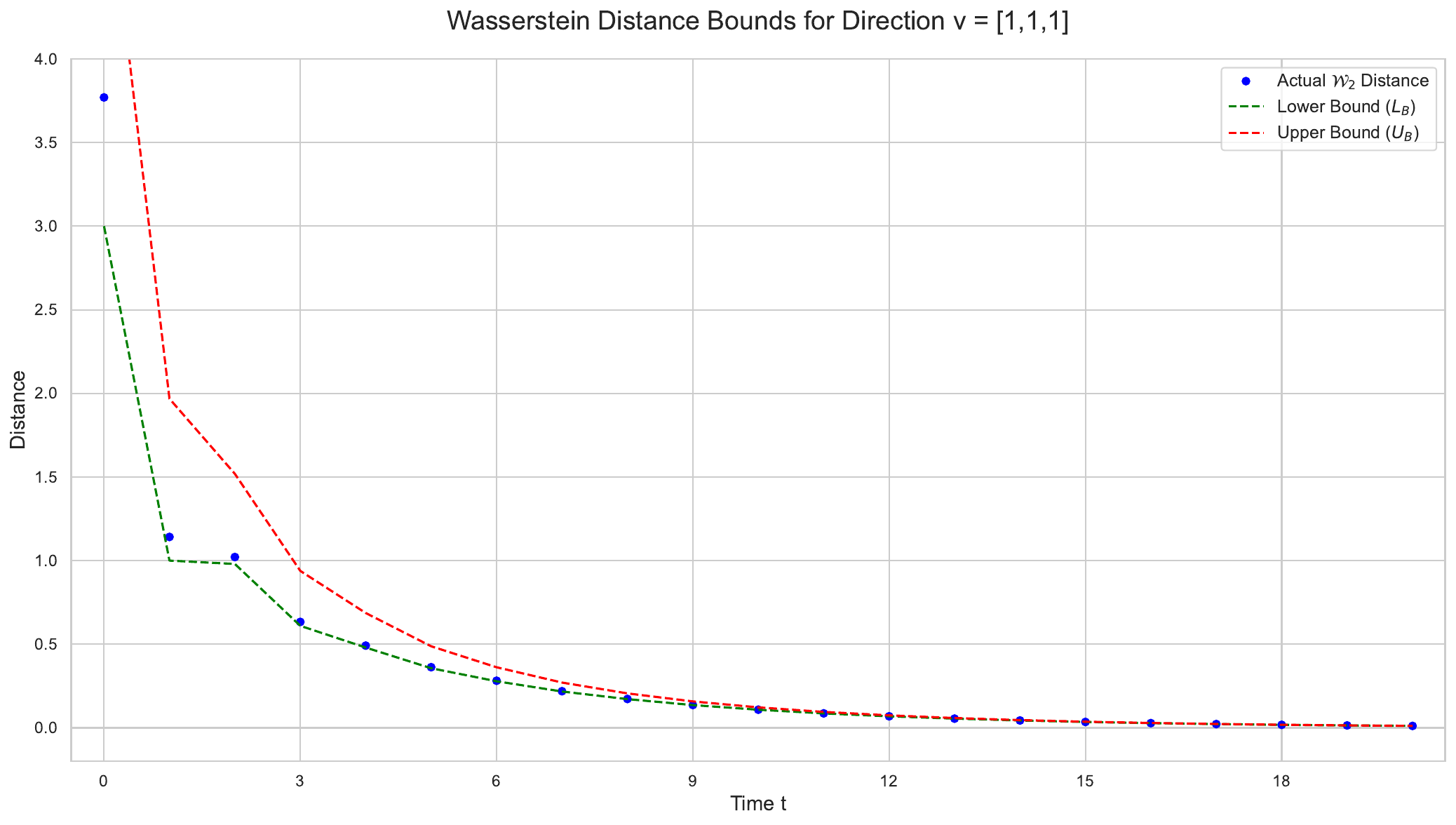}
        \caption{$\mathcal{W}_2$ distance bounds for a diagonal system.}
        \label{fig:diagonal}
\end{minipage}
\hfill
    \begin{minipage}{.48\textwidth} 
        \centering
        \includegraphics[width=0.8\textwidth]{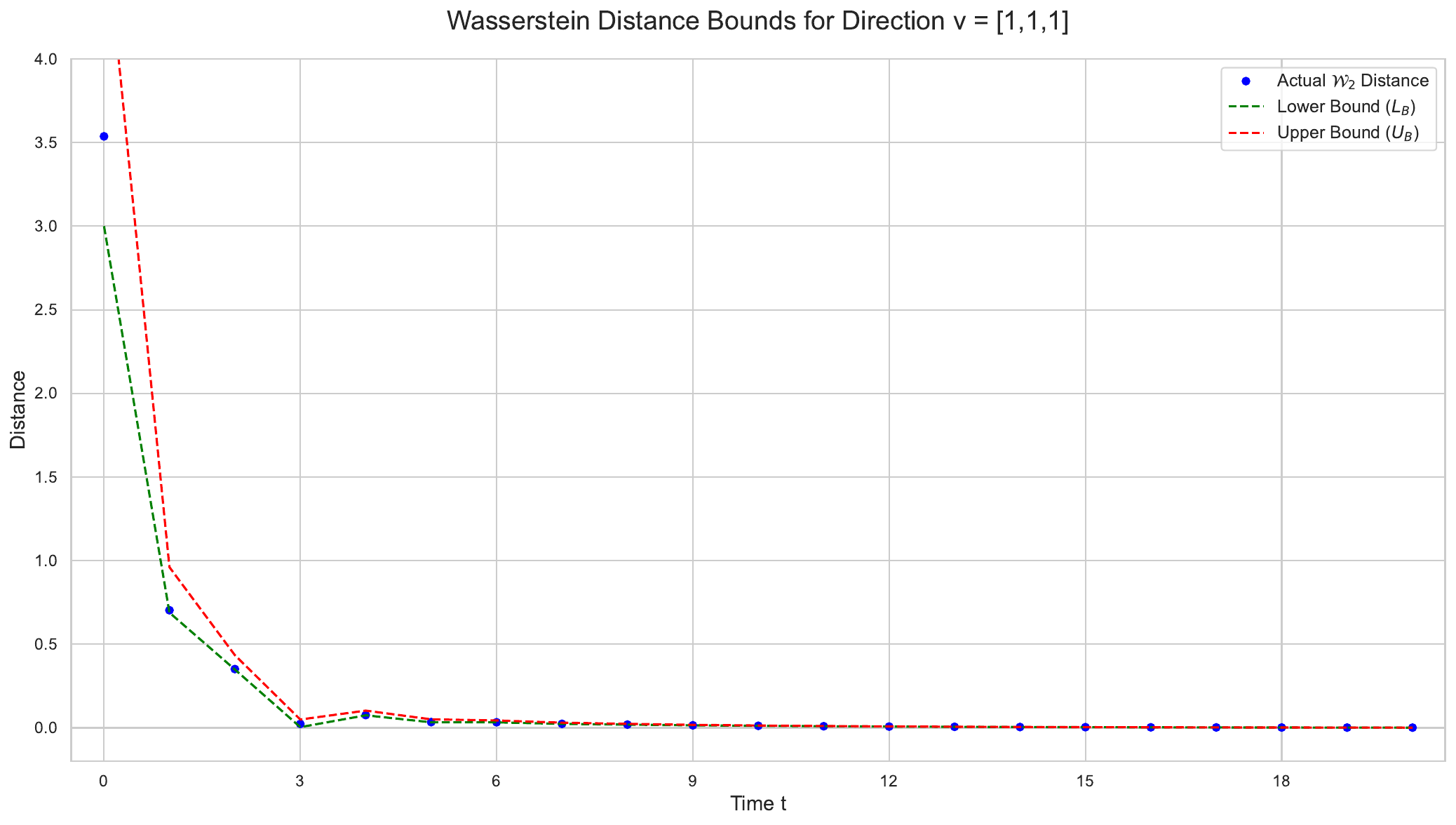}
        \caption{$\mathcal{W}_2$ distance bounds for a non-diagonal symmetric system with the same eigenvalues as in Figure 
        \ref{fig:diagonal}.}
        \label{fig:symmetric_general}
\end{minipage}
    \end{figure}

\subsubsection{\textbf{Non-diagonalizable system (3d, 1 Jordan block)}}
We compare the differences when the eigenvalues are
small against bigger ones.
\begin{figure}[h!]
    \begin{minipage}{.48\textwidth} 
        \centering 
        \textbf{Case 1.} The simulated system matrix is $\mathcal{Q}_1$:
        \[
        \mathcal{Q}_1 = \begin{pmatrix}
            0.2 & 1 & 0 \\
            0 & 0.2 & 1 \\
            0 & 0 & 0.2
        \end{pmatrix}.
        \]
    \hfill\
    \hfill\\[2cm]
    \end{minipage}
    \hfill 
    \begin{minipage}{.48\textwidth} 
        \centering
        \textbf{Case 2.} The simulated system matrix is $\mathcal{Q}_2$:
        \[
        \mathcal{Q}_2 = \begin{pmatrix}
            0.9 & 1 & 0 \\
            0 & 0.9 & 1 \\
            0 & 0 & 0.9
        \end{pmatrix}.
        \]
    \end{minipage} 
\end{figure}    
The initial state is $x = (1, 1, 1)$. We see that the behavior changes considerably in the case of small eigenvalues and
larger ones. For small eigenvalues the convergence is fast comparing a smaller eigenvalue (in complex modulus) with a bigger one, see Figure
\ref{fig:jordan3d_combinadas}.
    
   \begin{figure}[!htbp]
    \centering
    \begin{subfigure}[b]{0.48\textwidth}
        \centering
        \includegraphics[width=\textwidth]{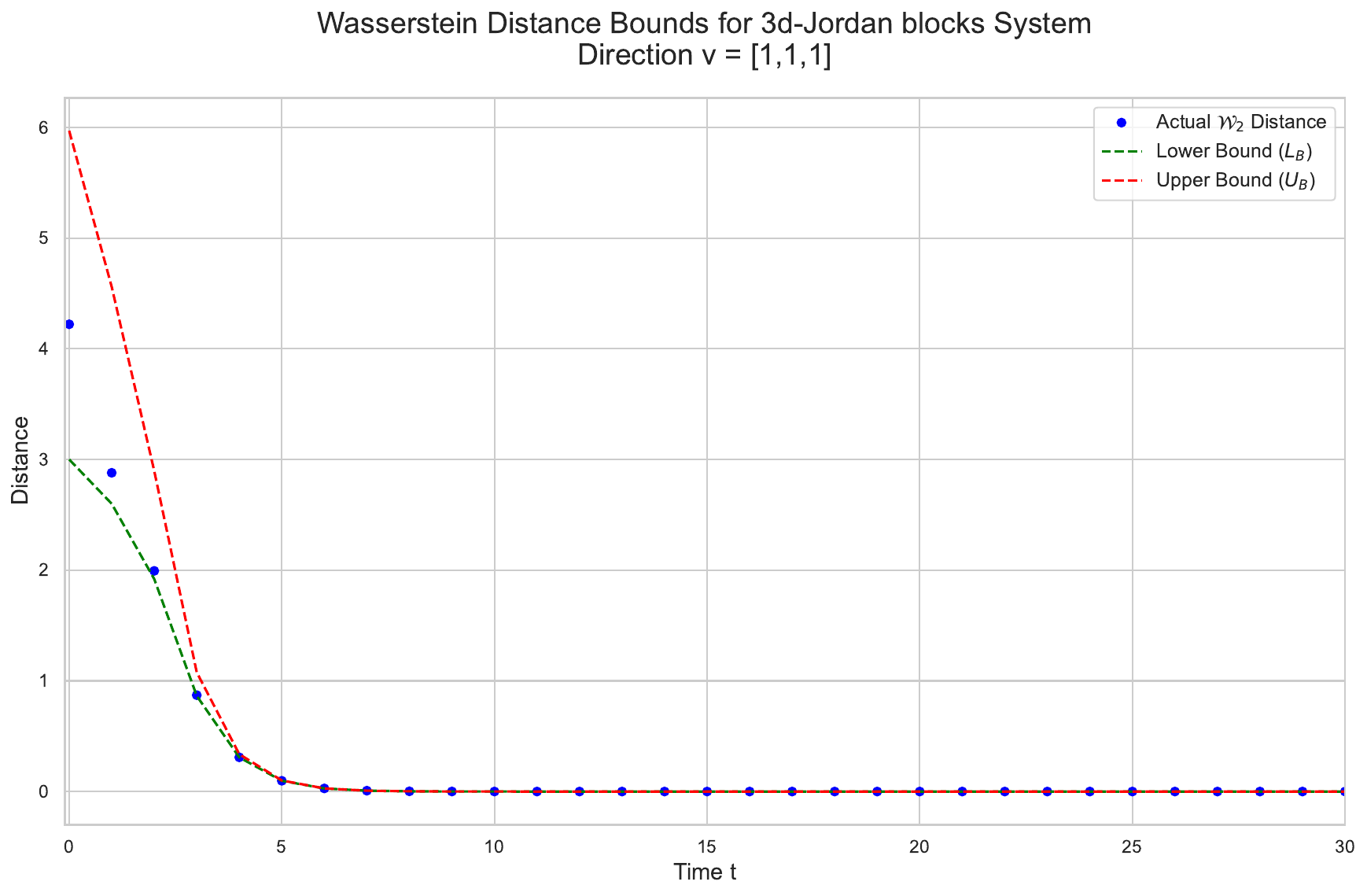}
        \caption{3d-System: ``small eigenvalues''.}
        \label{fig:jordan3d_small} 
    \end{subfigure}
    \hfill 
    \begin{subfigure}[b]{0.48\textwidth}
        \centering
        \includegraphics[width=\textwidth]{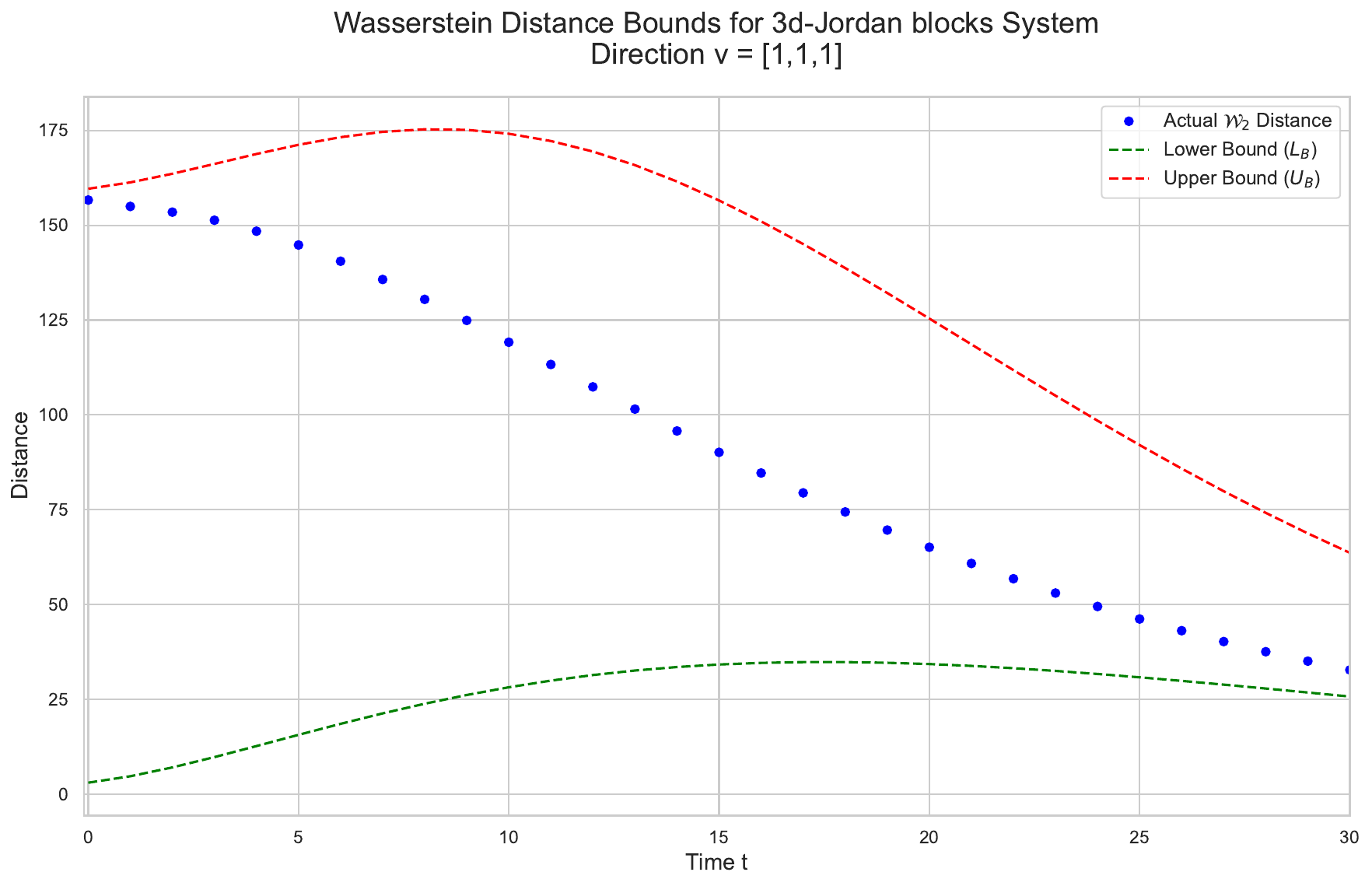}
        \caption{3d-System: ``big eigenvalues''.}
        \label{fig:jordan3d_big} 
    \end{subfigure}
    \caption{Wasserstein for 3d non-diagonalized Systems.}
    \label{fig:jordan3d_combinadas} 
\end{figure}

\subsubsection{\textbf{Oscillatory system (complex eigenvalues) for $\mathcal{W}_2$}}
\label{ssec:oscillatory}
We simulate a 3d-system exhibiting oscillatory behavior. The parameters are set to $r=2$ (for the $\mathcal{W}_2$ distance), $v = (1, 1, 1)$, $x = (1, 1, 1)$.
The dynamics is governed by the interaction matrix $\mathcal{Q}$
\[
\mathcal{Q} = \begin{pmatrix}
0.8 & -0.1 & 0.0 \\
0.1 & 0.8 & 0.0 \\
0.0 & 0.0 & 0.5
\end{pmatrix}.
\]
This system has oscillatory dynamics because the upper-left $2 \times 2$ block has the complex conjugate eigenvalues $\lambda \approx 0.8 \pm 0.1\ii$. We note that
$|\lambda| \approx 0.806$, see Figure~\ref{fig:oscillatory}.
\begin{figure}[H]
    \centering
    \includegraphics[width=0.8\textwidth]{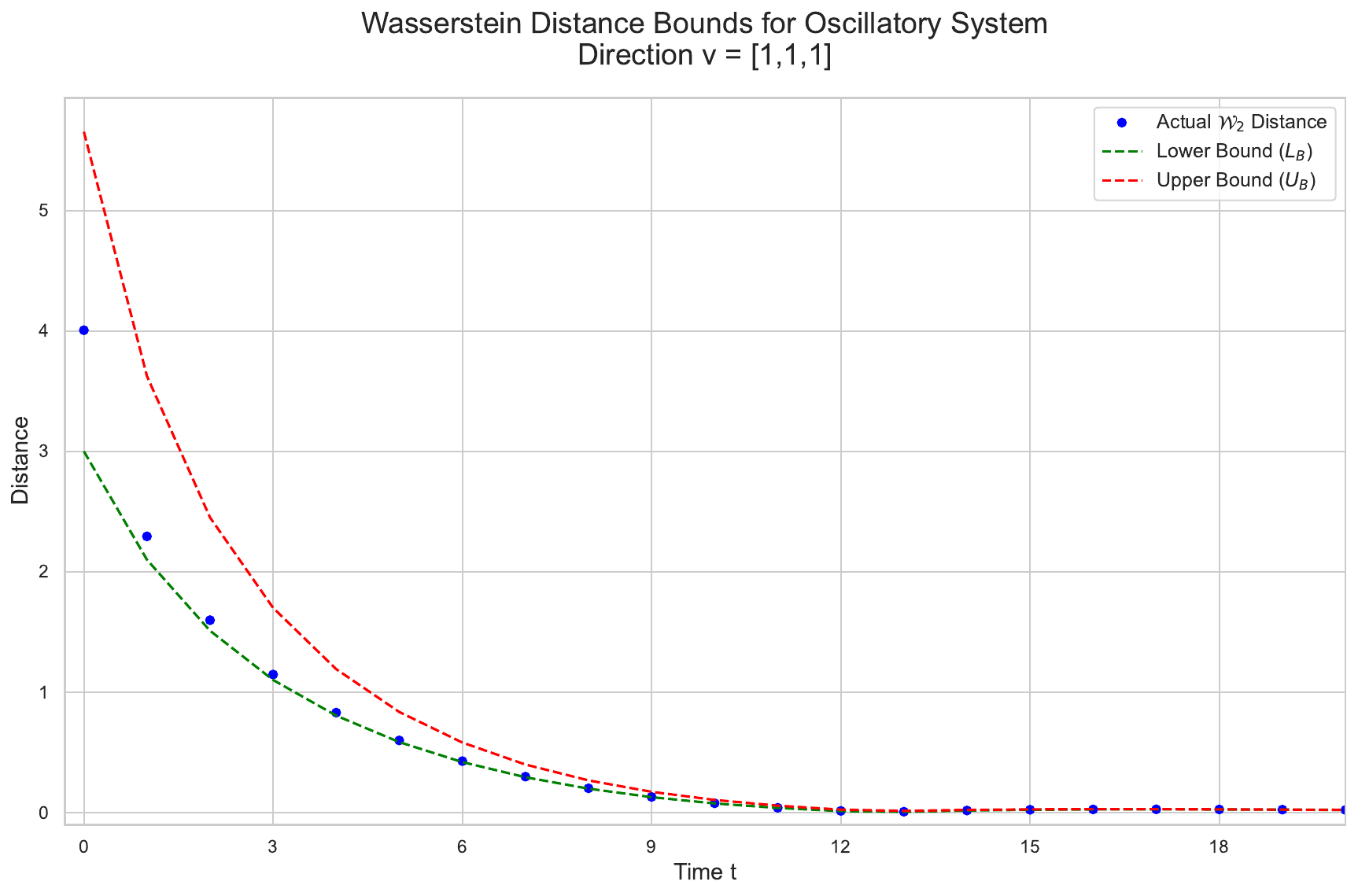}
    \caption{Bounds validation for the oscillatory model.}
    \label{fig:oscillatory}
\end{figure}    
    
\subsubsection{\textbf{Non-diagonalizable 4d Jordan system for $\mathcal{W}_2$}}
We analyze a stable 4d-system constructed with two $2\times2$ Jordan blocks, which can exhibit significant transient growth. The interaction matrix $\mathcal{Q}$ is given by
    \[
    \mathcal{Q} = \begin{bmatrix}
        0.9 & 1 & 0 & 0 \\
        0 & 0.9 & 0 & 0 \\
        0 & 0 & 0.8 & 1 \\
        0 & 0 & 0 & 0.8
    \end{bmatrix}.
    \]
The initial state  is given by $x = (1, 0, 1, 0)$ and the slicing vector by $v = (1, 1, 1, 1)$. 
    \begin{figure}[H]
        \centering
        \includegraphics[width=0.8\textwidth]{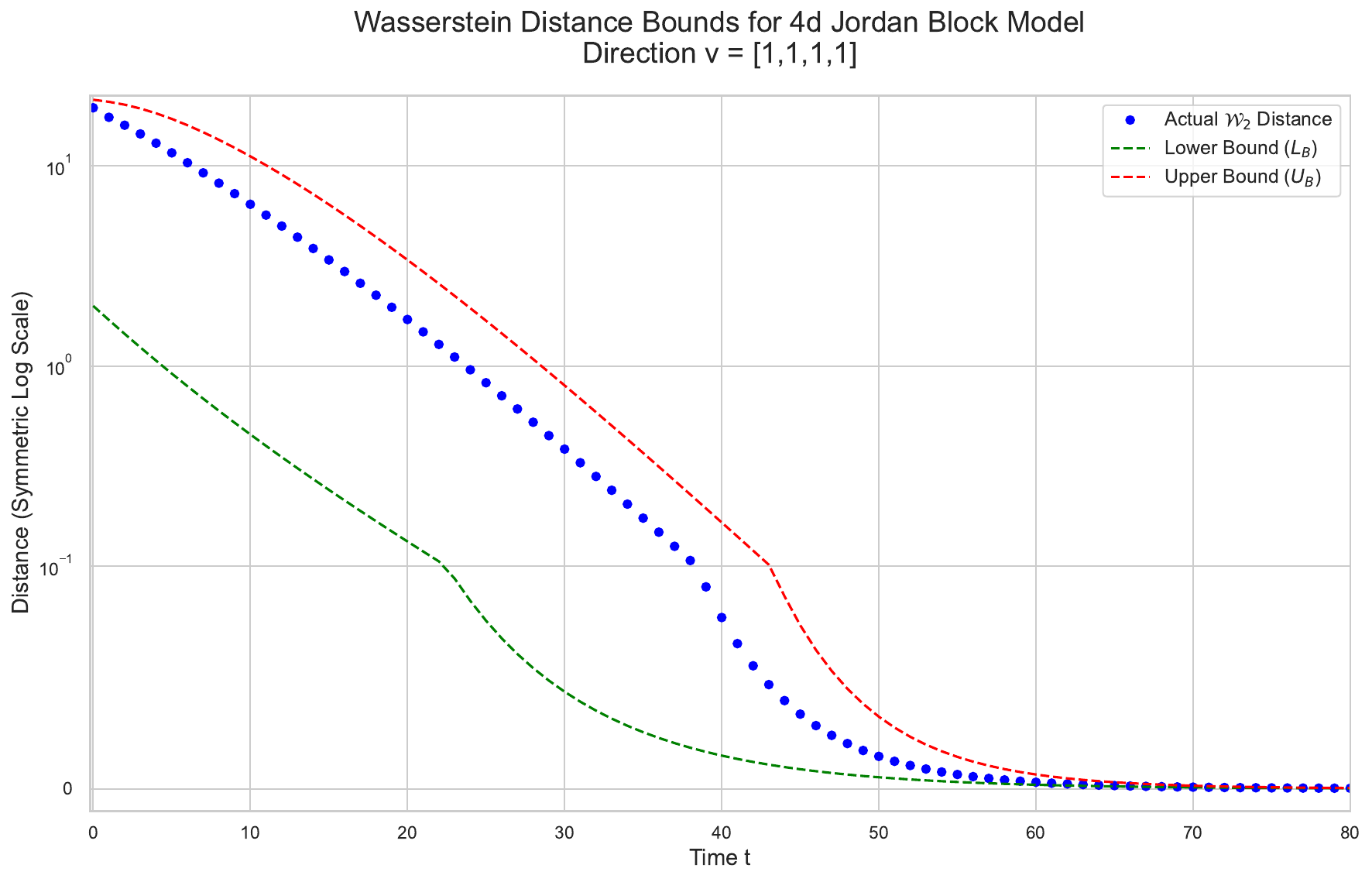}
        \caption{Wasserstein distance bounds for the 4d non-diagonalizable system.}
        \label{fig:jordan4d}
    \end{figure}
    
\subsubsection{\textbf{Non-diagonalizable 4d Jordan system for $\mathcal{W}_r$}}
In comparison to the previous cases, we compute the bound of $\mathcal{W}_r$ for $r=3$ and $r=4$. In general, there is
no closed formula. In 1d
the Wasserstein distance, $\mathcal{W}_r(\mathbb{P}_1, \mathbb{P}_2)$ for any $r\geq 1$, can be computed 
 as follows
\[
\mathcal{W}_r(\mathbb{P}_1, \mathbb{P}_2) = \left( \int_0^1 |F^{\leftarrow}_{1}(u) - F^{\leftarrow}_2(u)|^r \ud u \right)^{1/r},
\]
where $F^{\leftarrow}_i$, $i=1,2$ is the quantile function of the respective distribution of $\mathbb{P}_i$.
That is,
\[
F^{\leftarrow}_i(u) = \inf \{x\in \mathbb{R}:\, F_i(x):=\mathbb{P}_i((-\infty,x])\geq  u\},\quad 0 < u < 1,
\]
see for instance \cite{ Santambrogio2015,Villani03}.
Straightforward computations, see Lemma~\ref{lem:loc-scale}, yield for any $r\geq 1$ 
\begin{align*}
    \mathcal{W}_r(\mathcal{N}(m, \sigma^2),\mathcal{N}(0, \sigma_{\infty}^2))
                            & = \left( \int_0^1 |m + (\sigma - \sigma_\infty) \Phi^{-1}(u)|^r \ud u \right)^{1/r},
\end{align*}
where $\Phi$ is the cumulative distribution function of the standard Normal distribution.
In the sequel, we compute $\mathcal{W}_r$ for $r=1,3,4$. In general, the integrals of such type  can be evaluated using numerically integration, see \cite{Quarteroni2007}.
For initial state $x = (1, 0, 1, 0)$ and a slicing vector $v = (1, 1, 1, 1)$ we obtain the following results in  Figure \ref{fig:jordan_4d_all_r}.

\begin{figure}[!htbp]
\centering 
    \begin{subfigure}[b]{0.7\textwidth} 
        \centering
    \includegraphics[width=\textwidth]{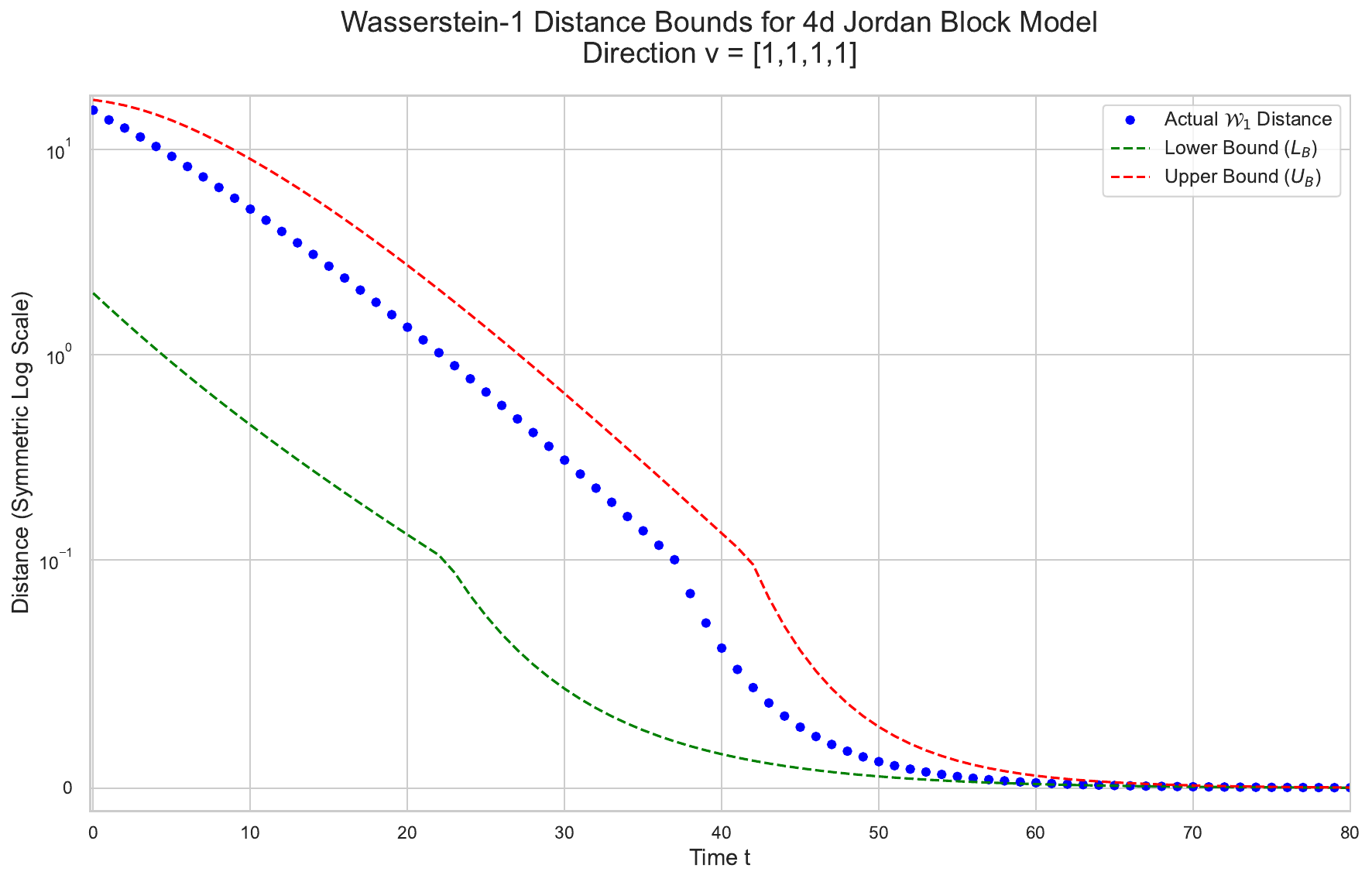}
        \caption{Wasserstein distance $\mathcal{W}_1$.}
        \label{fig:jordan4d_w1}
    \end{subfigure} 
    \vfill 
    \begin{subfigure}[b]{0.49\textwidth}
        \centering
        \includegraphics[width=\textwidth]{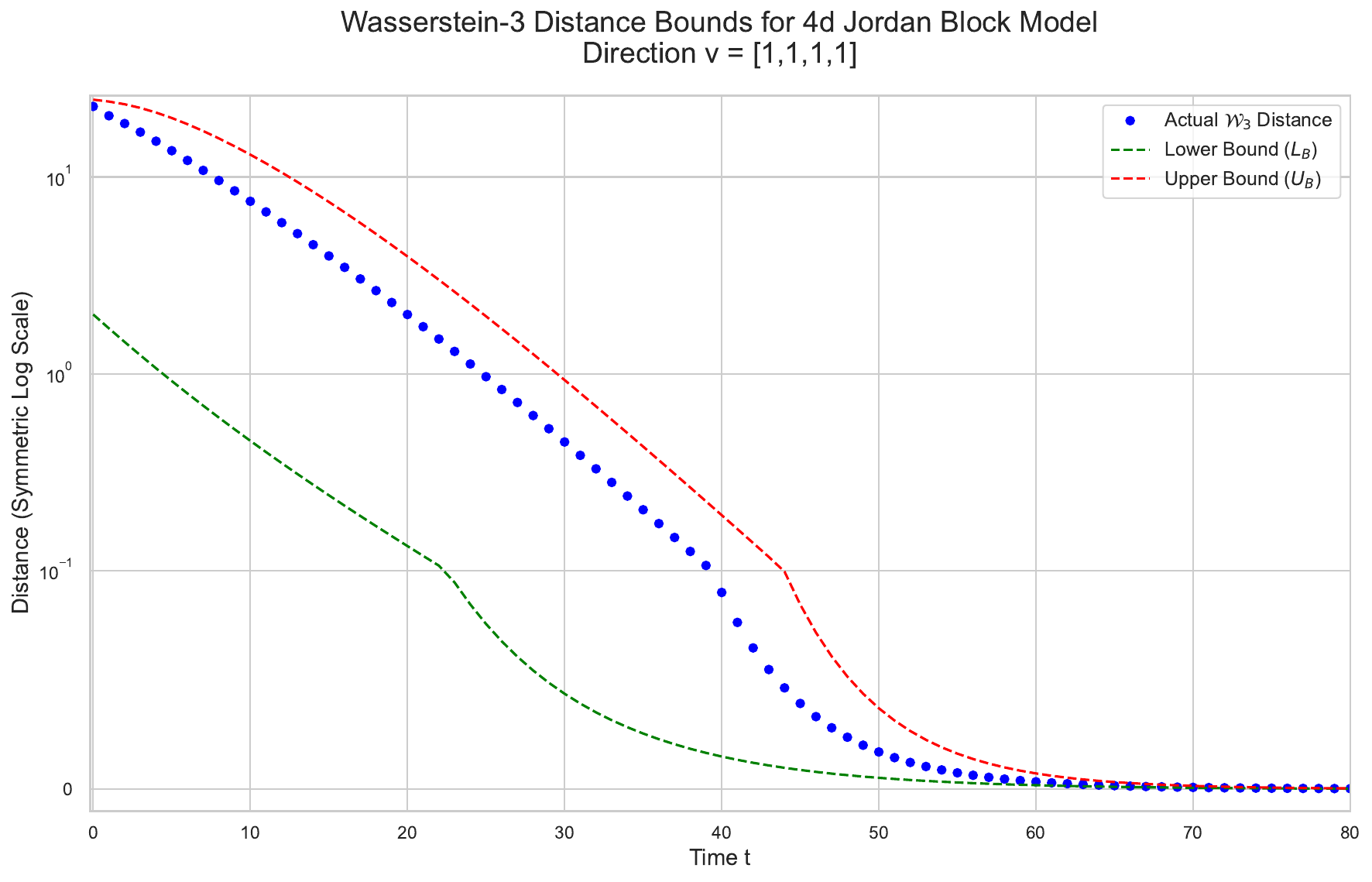}
        \caption{Wasserstein distance $\mathcal{W}_3$.}
        \label{fig:jordan3d_w3} 
    \end{subfigure}
    \hfill 
    \begin{subfigure}[b]{0.49\textwidth}
        \centering
    \includegraphics[width=\textwidth]{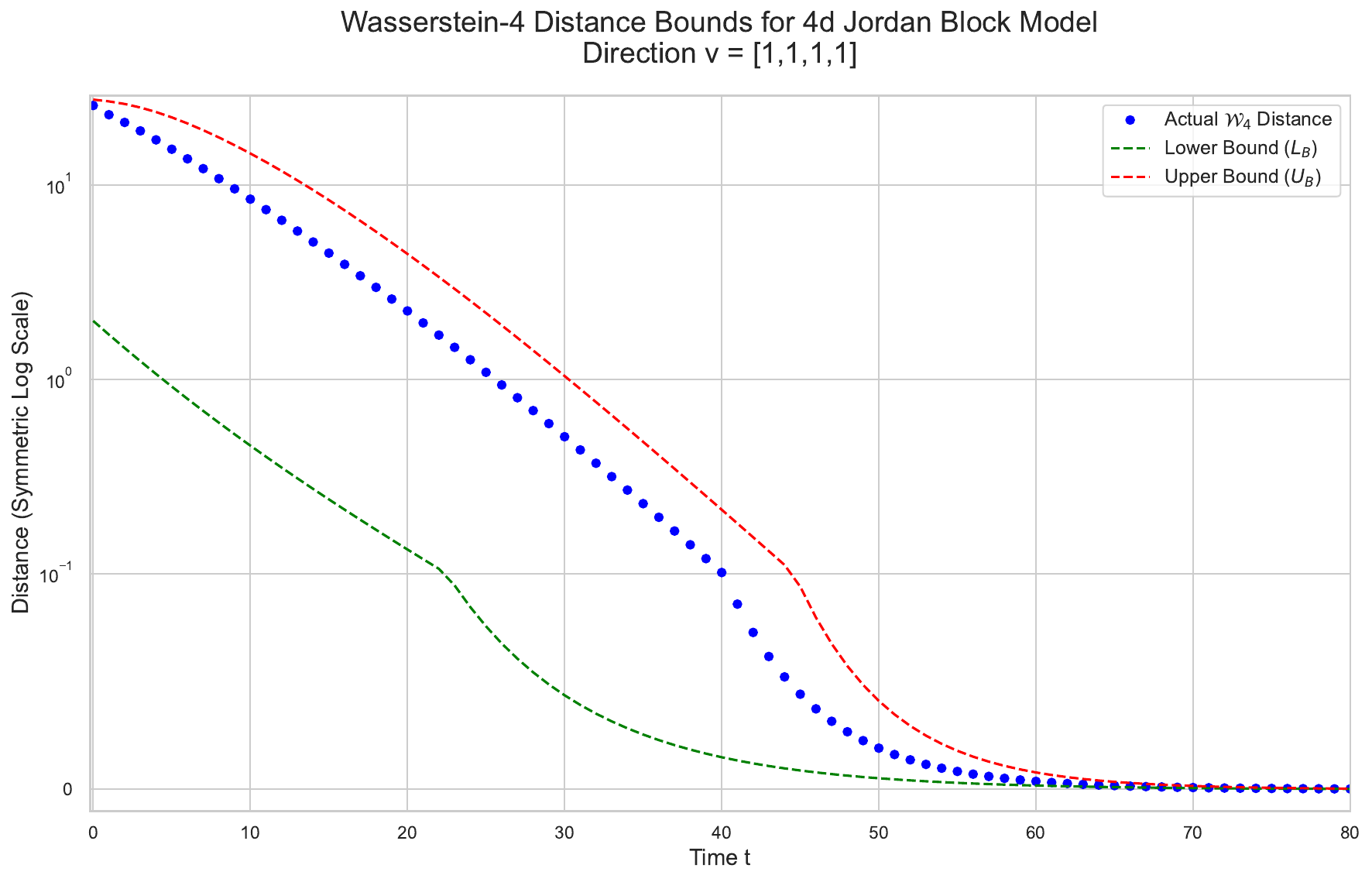}
        \caption{Wasserstein distance $\mathcal{W}_4$.}
        \label{fig:jordan4d_w4} 
    \end{subfigure}
    \caption{Wasserstein distance bounds for the 4d non-diagonalizable system for $r=1, 3, 4$.}
    \label{fig:jordan_4d_all_r}
\end{figure}

\subsubsection{\textbf{Validation for $\mathcal{W}_3$ and $\mathcal{W}_4$ distances for the oscillatory model}}
\label{ssec:oscillatoryw_w3_w4}

\noindent To further test the robustness of the bounds and their dependence on the order $r$, we extend the analysis of the oscillatory system in Subsection \ref{ssec:oscillatory} to the $\mathcal{W}_3$ and $\mathcal{W}_4$ distances. We consider the same system parameters as in the $\mathcal{W}_2$ 
computation (i.e., the same $\mathcal{Q}$, $x$, and $v$) to allow for a direct comparison. The only change is setting $r=3$ and $r=4$ respectively, which modifies the constant prefactor $C_r$ in the upper bound $U_B$ (see Appendix \ref{ap:numerics}) and requires the numerical integration for the actual distance.

The simulation results are presented in Figure~\ref{fig:oscillatory_w3_w4}. The bounds continue to hold for these higher-order distances, correctly enveloping the oscillatory convergence path.

\begin{figure}[H]
    \centering
    \begin{subfigure}[b]{0.49\textwidth}
        \centering
        \includegraphics[width=\textwidth]{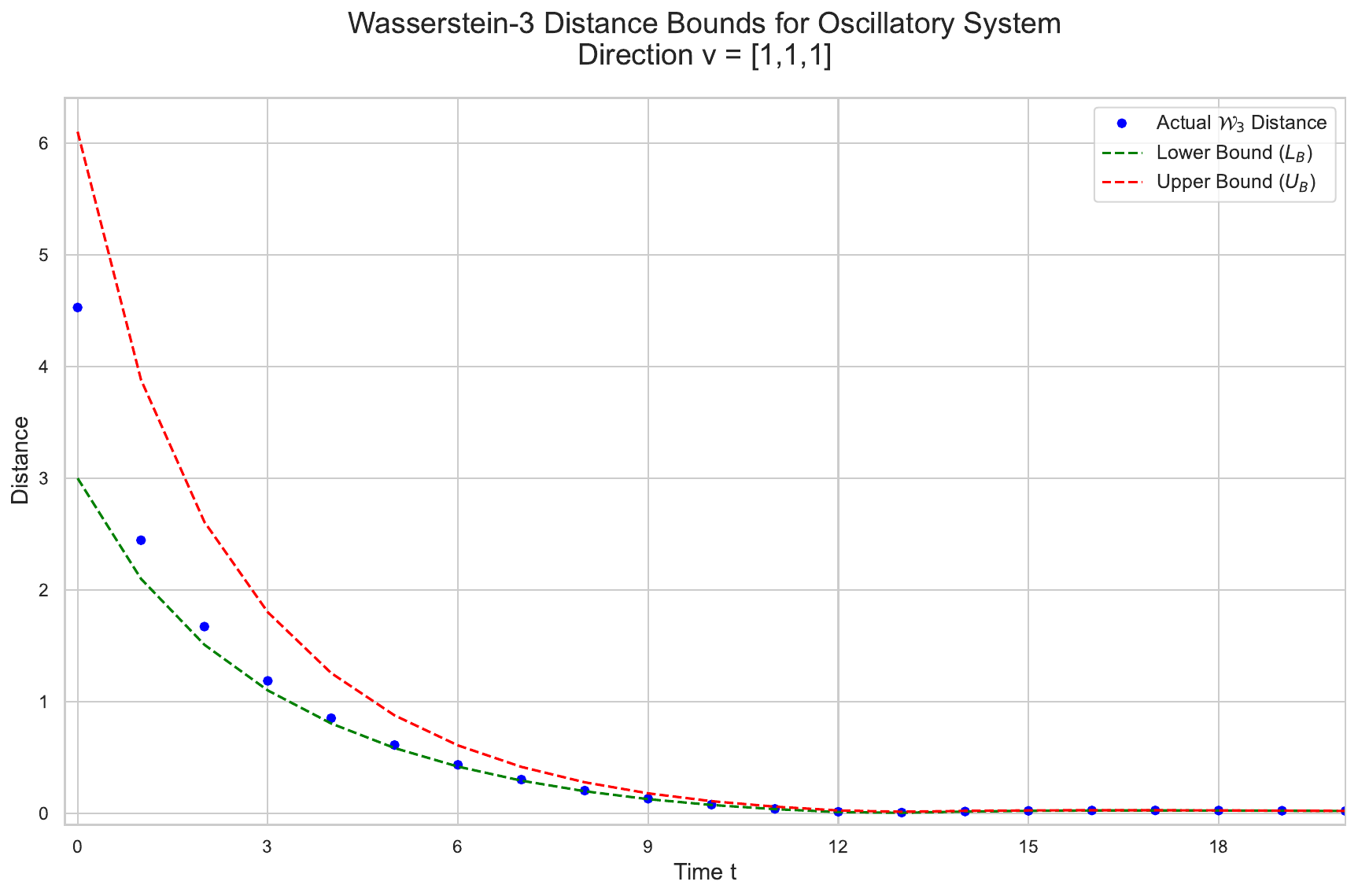}
        \caption{Bounds validation for $\mathcal{W}_3$ distance.}
        \label{fig:oscillatory_w3}
    \end{subfigure}
    \begin{subfigure}[b]{0.49\textwidth}
        \centering
        \includegraphics[width=\textwidth]{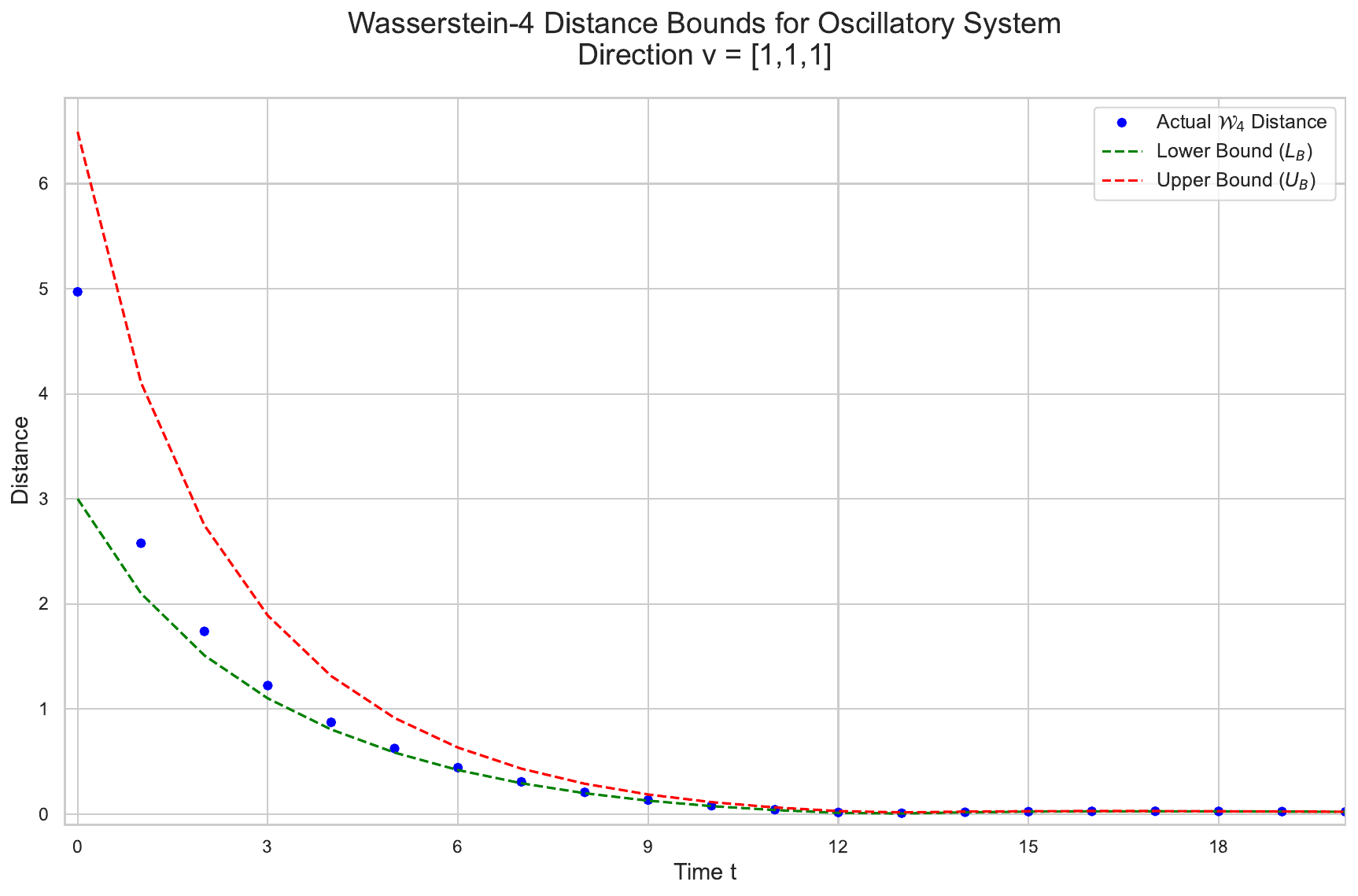}
        \caption{Bounds validation for $\mathcal{W}_4$ distance.}
        \label{fig:oscillatory_w4}
    \end{subfigure}
    \caption{Bounds validation for the oscillatory system using the $\mathcal{W}_3$ and $\mathcal{W}_4$ distances.}
    \label{fig:oscillatory_w3_w4}
\end{figure}

\appendix

\section{\textbf{Ergodicity}}\label{ap:ergodicity}

\begin{lemma}\label{lem:ergodicity}
Under Hypothesis~\ref{hyp:hyperbolic} and Hypothesis~\ref{hyp:moment} for any initial datum $x\in \mathbb{R}^d$ the stochastic process
\begin{equation}
X_t(x)=\cQ^t x+ \sum_{j=0}^{t-1} \cQ^j \Sigma\xi_{t-j},	\quad t\in \mathbb{N}.
\end{equation}
 converges in law as $t$ tends to infinity to
\begin{equation}
X_\infty\stackrel{\mathsf{d}}{=}\sum_{j=0}^{\infty} \cQ^j\Sigma\xi_{j}.
\end{equation}
\end{lemma}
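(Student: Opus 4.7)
The plan is to exploit the weak representation \eqref{eq:reprelimit},
\[
X_t(x) \stackrel{\textsf{d}}{=} \cQ^t x + \sum_{j=0}^{t-1} \cQ^j \Sigma \xi_j,
\]
and to treat the two summands separately: the deterministic term $\cQ^t x$ and the partial sum $Y_t := \sum_{j=0}^{t-1} \cQ^j \Sigma \xi_j$ of independent random vectors. First I would invoke Lemma~\ref{lem:*}\,(2) combined with Hypothesis~\ref{hyp:hyperbolic}, which gives $|\cQ^t x| \leq K_d \|\cQ\|_{*}^t |x|$ with $\|\cQ\|_{*} < 1$; thus $\cQ^t x \to 0$ deterministically and in particular in distribution.

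The core of the argument is to show that $(Y_t)_{t\in \bN}$ converges in $L^r$ to some random vector $Y_\infty \stackrel{\textsf{d}}{=} \sum_{j=0}^\infty \cQ^j \Sigma \xi_j$. For $1\leq t < s$, Minkowski's inequality (valid for $r\geq 1$), Lemma~\ref{lem:*}\,(2), the submultiplicativity of Frobenius with $|\Sigma z|\leq \|\Sigma\|_F |z|$, and the i.i.d. property of $(\xi_j)_{j\in \bN}$ yield
\[
\bigl(\mathbb{E}[|Y_s - Y_t|^r]\bigr)^{1/r}
\leq \sum_{j=t}^{s-1} \bigl(\mathbb{E}[|\cQ^j \Sigma \xi_j|^r]\bigr)^{1/r}
\leq K_d \|\Sigma\|_F \bigl(\mathbb{E}[|\xi_1|^r]\bigr)^{1/r} \sum_{j=t}^{s-1} \|\cQ\|_{*}^j.
\]
Under Hypothesis~\ref{hyp:moment} and since $\|\cQ\|_{*} < 1$, the geometric tail $\sum_{j\geq t} \|\cQ\|_{*}^j \to 0$ as $t\to\infty$, so $(Y_t)_{t\in \bN}$ is a Cauchy sequence in $L^r(\Omega;\bR^d)$ and thus converges in $L^r$, and in particular in probability and in distribution, to a well-defined random vector $Y_\infty$ whose law is denoted $\mu$.

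Finally, combining the two limits with Slutsky's theorem concludes: since $\cQ^t x \to 0$ in probability and $Y_t \to Y_\infty$ in distribution, we obtain $\cQ^t x + Y_t \to Y_\infty$ in distribution, which together with the weak representation \eqref{eq:reprelimit} gives $X_t(x) \to X_\infty$ in distribution, where $X_\infty \stackrel{\textsf{d}}{=} Y_\infty = \sum_{j=0}^\infty \cQ^j \Sigma \xi_j$. The only mildly delicate step is the Minkowski/Cauchy argument above; everything else is a routine application of the already-established Schur contractivity bound of Lemma~\ref{lem:*}. I note that the same scheme, replacing the $L^r$ tail bound by a Borel--Cantelli argument based on $\mathbb{E}[\log^+ |\xi_1|]<\infty$ and the geometric decay of $\|\cQ^j\|_{*}$, would recover the version of the statement under the strictly weaker logarithmic moment assumption alluded to in the paragraph following \eqref{eq:limite}.
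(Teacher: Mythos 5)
Your proof is correct, but it follows a genuinely different route from the paper's. The paper proves the lemma via characteristic functions: it factorizes $\mathbb{E}[\exp(\ii\langle\sum_{j<t}\cQ^j\Sigma\xi_j,u\rangle)]$ using independence, takes logarithms, controls the tail of the resulting series via a second-order Taylor expansion of $e^{\ii\ell}$ combined with a truncation at radii $r_j = \rho^{-j}-1$, and concludes by the Weierstrass $M$-test and the L\'evy continuity theorem. That argument deliberately uses only $\mathbb{E}[\log(|\Sigma\xi|+1)]<\infty$, which is why the paragraph after \eqref{eq:limite} records that the lemma in fact holds under a strictly weaker logarithmic moment assumption than Hypothesis~\ref{hyp:moment}. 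Your argument instead shows the partial sums $Y_t=\sum_{j<t}\cQ^j\Sigma\xi_j$ are Cauchy in $L^r$ via Minkowski, Lemma~\ref{lem:*}\,(2) and submultiplicativity, obtaining the geometric tail $K_d\|\Sigma\|_F(\mathbb{E}[|\xi_1|^r])^{1/r}\sum_{j\geq t}\|\cQ\|_*^j$, and then concludes by Slutsky since $\cQ^t x\to 0$ deterministically. This is shorter and more elementary, yields the stronger information that $Y_t\to Y_\infty$ in $L^r$ (not merely in law), and the constants it produces are exactly the ones already appearing in Theorem~\ref{thm:genericARMA}; the price is that it genuinely uses the $r$-th moment of Hypothesis~\ref{hyp:moment}, whereas the paper's characteristic-function proof does not. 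You already anticipate this trade-off at the end of your argument, which is accurate. Both proofs are valid for the lemma as stated.
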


\begin{proof}
In fact, the characteristic function of $X_t(x)$ is given by
\begin{equation}\label{eq:charac}
\begin{split}
\mathbb{E}\left[\exp\left(\ii \left\langle \sum_{j=0}^{t-1} \cQ^j \Sigma\xi_{j}, u\right\rangle\right)\right]&=
\prod_{j=0}^{t-1}
\mathbb{E}\left[\exp\left(\ii \left\langle  \cQ^j\Sigma\xi_j, u\right\rangle\right)\right]\\
&=
\prod_{j=0}^{t-1}
\mathbb{E}\left[\exp\left(\ii \left\langle  \Sigma\xi, 
(\cQ^*)^j u\right\rangle\right)\right]\\
&=
\prod_{j=0}^{t-1}
\psi_{\Sigma\xi} 
((\cQ^*)^j u)=\prod_{j=0}^{t-1}
\exp(\log(\psi_{\Sigma\xi} 
((\cQ^*)^j u)))\\
&=
\exp(\sum_{j=0}^{t-1}\log(\psi_{\Sigma\xi} 
((\cQ^*)^j u))).
\end{split}
\end{equation}

\noindent Let $\ell\in \mathbb{R}$ be fixed. Then there exist complex numbers $\theta_1=\theta_1(\ell)$ satisfying
$|\theta_1|\leq 1$
 such that
\begin{align}\label{eq:taylor}
e^{\ii \ell}&=1+\ii \ell+\frac{\theta_1}{2}\ell^2,
\end{align}
see~\cite[p.310, Ch.7, Proof of Theorem~7.3.1.]{Ashbook}.
Moreover, for $u\in \mathbb{C}$ with $|u|\leq \frac{1}{2}$ there exists a complex number $\theta_2=\theta_2(u)$ with $|\theta_2|\leq 1$ such that
\begin{equation}\label{eq:loga}
\log(1+u)=u+\theta_3 |u|^2,
\end{equation}
where $\log$ denotes the principal branch of the logarithm.
For any $x,z\in \mathbb{R}^d$ we define $\ell=\langle x,z \rangle$. By \eqref{eq:taylor} we have
\begin{align}\label{eq:taylor1}
e^{\ii \langle x,z \rangle}&=
1+\ii \langle x,z \rangle+\frac{\theta_1}{2}\langle x,z \rangle^2.
\end{align}
For any $r>0$ fixed, we set $D_r:=\{y\in \mathbb{R}^d:\,|y|\leq r\}$, and $\mathbbm{1}_{D_r}$ denotes the indicator function.
We then write
\begin{align}\label{eq:expan}
e^{\ii \langle x,z \rangle}
&=
\left(1+\ii \langle x,z \rangle \mathbbm{1}_{D_r}(x)\right)+g(x,z).
\end{align}
By \eqref{eq:taylor1} with the help of the Cauchy--Schwarz inequality we have 
\begin{align*}
|g(x,z)|&\leq \left|
e^{\ii \langle x,z \rangle}-
1-\ii \langle x,z \rangle
\right|\mathbbm{1}_{D_r}(x)+\left|
e^{\ii \langle x,z \rangle}-
1\right|\mathbbm{1}_{D^c_r}(x)\\
&\leq \left(\frac{1}{2}|x|^2|z|^2\right)
\mathbbm{1}_{D_r}(x)+2\cdot\mathbbm{1}_{D^c_r}(x).
\end{align*}
We show that 
\[
\lim\limits_{t\to \infty}\exp(\sum_{j=0}^{t}\log(\psi_{\Sigma\xi} 
((\cQ^*)^j u)))=
\exp(\sum_{j=0}^{\infty}\log(\psi_{\Sigma\xi} 
((\cQ^*)^j u))),
\]
that is, it is enough to check that
\[
\lim\limits_{t\to \infty}\sum_{j=j_0}^{t}\log(\psi_{\Sigma\xi} 
((\cQ^*)^j u))=
\sum_{j=j_0}^{\infty}\log(\psi_{\Sigma\xi} 
((\cQ^*)^j u))
\]
for some fixed $j_0\in \mathbb{N}$ large enough.
We note that 
\begin{align}
\psi_{\Sigma\xi} 
((\cQ^*)^j u)&=\log(\mathbb{E}[e^{\ii\langle \Sigma\xi, (\cQ^*)^j u \rangle }])
=
\log\left(\mathbb{E}\left[1+\ii \langle \Sigma\xi,(\cQ^*)^j u \rangle \mathbbm{1}_{D_{r_j}}(\Sigma\xi)+g(\Sigma\xi,(\cQ^*)^j u)\right]\right)\\
&=
\log\left(1+\ii \mathbb{E}[\langle \Sigma\xi,(\cQ^*)^j u \rangle \mathbbm{1}_{D_{r_j}}(\Sigma\xi)]+g(\Sigma\xi,(\cQ^*)^j u)]\right),
\end{align}
where $r_j>0$ is arbitrary.
Let 
\[
v_j:=\ii \mathbb{E}[\langle \Sigma\xi,(\cQ^*)^j u \rangle \mathbbm{1}_{D_{r_j}}(\Sigma\xi)]+g(\Sigma\xi,(\cQ^*)^j u).
\]
and observe that for $|u|\leq 1$
\begin{align}
|v_j|&\leq C|\lambda|^j|u|\mathbb{E}[|\Sigma\xi| \mathbbm{1}_{D_{r_j}}(\Sigma\xi)]
+\frac{1}{2}
C^2|\lambda|^{2j}|u|^2
\mathbb{E}[|\Sigma\xi|^2\mathbbm{1}_{D_{r_j}}(\Sigma\xi)]
+2\cdot \mathbb{P}(|\Sigma\xi|>r_j)\\
&\leq C|u|r_j|\lambda|^j
+\frac{1}{2}C^2|u|^2 r^2_j|\lambda|^{2j}
+2\cdot \mathbb{P}(|\Sigma\xi|>r_j)<\frac{1}{2}
\end{align}
for any $j\geq  j_0$ and $j_0$ large enough.
By \eqref{eq:loga} it is only needed to show that 
\begin{align*}
\sum_{j\geq j_0} |v_j| <\infty,
\end{align*}
that is,
\[
\sum_{j\geq j_0} r_j|\lambda|^j<\infty \quad \textrm{ and }\quad
\sum_{j\geq j_0} \mathbb{P}(|\Sigma\xi|>r_j)<\infty.
\]
Let $r_j=\rho^{-j}-1>1$ with $0<|\lambda|<\rho<1$ and $j\geq j_0$.
\begin{align}\label{eq:conv1}
\sum_{j\geq j_0} \mathbb{P}(|\Sigma\xi|>r_j)&=
\sum_{j\geq j_0} \mathbb{P}(\log(|\Sigma\xi|+1)>\log(r_j+1))\\
&=\sum_{j\geq j_0} \mathbb{P}(\log(|\Sigma\xi|+1)>j\log(1/\rho))<\infty,
\end{align}
due to $\mathbb{E}[\log(|\Sigma\xi|+1)]<\infty$.
By the Weierstrass M-test we have that 
\[
\sum_{j=1}^{\infty}\log(\psi_{\Sigma\xi} 
((\cQ^*)^j u))
\] 
converges absolutely and uniformly on $|u|\leq 1$. Moreover, the dominated convergence theorem yields that
the map $u\mapsto \sum_{j=1}^{\infty}\log(\psi_{\Sigma\xi} 
((\cQ^*)^j u))$
is continuous at zero.
This combined with \eqref{eq:conv1} yields with the help of the L\'evy continuity theorem the convergence in law of \eqref{eq:reprelimit} to \eqref{eq:limite} as $t\to \infty$.
\end{proof}

\bigskip 
\section{\textbf{Basic properties of the Wasserstein distance}}\label{ap:A}

\begin{lemma}\label{lem:basic}
Let $X$ and $Y$ be random vectors taking values in $\mathbb{R}^d$ with finite $r$-th absolute moments for some $r\geq 1$.
Then 
\begin{enumerate}
\item The Wasserstein distance defines a metric, in the sense of being definite, symmetric and satisfying
the triangle inequality.
\item The Wasserstein distance is translation invariant, in that
\[
\mathcal{W}_r(u+X,Y)=\mathcal{W}_r(X,Y-u)
\]
for all deterministic vectors $u\in \mathbb{R}^d$.
\item The Wasserstein distance is homogeneous, in that
\[
\mathcal{W}_r(cX,cY)=|c|\mathcal{W}_r(X,Y)
\]
for any constant $c\in \mathbb{R}$.
\item The Wasserstein distance satisfies the following shift linearity property
\[
\mathcal{W}_r(u+X,X)=|u|,
\]
for any deterministic vectors $u\in \mathbb{R}^d$.
\item Characterization: Let $(X_n)_{n\in \mathbb{N}}$ be a sequence of random vectors with finite $r$-th absolute moments.
Then the following are equivalent:
\begin{itemize}
\item[(i)] $\mathcal{W}_r(X_n,X)\to 0$ as $n\to \infty$.
\item[(ii)] $X_n\to X$ in distribution as $n\to \infty$, and
 $\mathbb{E}[\|X_n\|^r]\to \mathbb{E}[\|X\|^r] $ as $n\to \infty$.
\item[(iii)] $X_n\to X$ in distribution as $n\to \infty$, and
 $(\mathbb{E}[\|X_n\|^r])_{n\in \mathbb{N}}$
is uniformly integrable.
\end{itemize}
\end{enumerate}
\end{lemma}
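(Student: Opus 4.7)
The plan is to establish each item from the definition~\eqref{eq:Wr} and standard coupling manipulations, reserving the main work for item (5). For item (1), symmetry is immediate from the symmetry of the set $\Pi$ under swapping marginals, and definiteness follows because $|u-v|^r=0$ iff $u=v$, so $\mathcal{W}_r(X,Y)=0$ forces the (compactness-provided) optimal coupling to be supported on the diagonal, giving $\mathbb{P}_X=\mathbb{P}_Y$. The only non-trivial point is the triangle inequality, which I would obtain via the gluing lemma: given (almost) optimal couplings $\pi_{12}$ of $(X_1,X_2)$ and $\pi_{23}$ of $(X_2,X_3)$ I construct a measure $\pi_{123}$ on $(\mathbb{R}^d)^3$ with the prescribed pairwise marginals, apply Minkowski's inequality in $L^r(\pi_{123})$ to the pointwise bound $|u_1-u_3|\leq |u_1-u_2|+|u_2-u_3|$, and then take the infimum.

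For items (2) and (3), the measurable bijections $(u_1,u_2)\mapsto(u_1-u,u_2)$ and $(u_1,u_2)\mapsto(c u_1,c u_2)$ push couplings of $(u+X,Y)$ (respectively $(cX,cY)$) bijectively to couplings of $(X,Y-u)$ (respectively $(X,Y)$ scaled by $|c|$), and transform the cost integrand by translation invariance or $r$-homogeneity of $|\cdot|^r$; optimizing on both sides yields the equalities. For item (4), the synchronous (diagonal) coupling of $(X,u+X)$ gives the upper bound $\mathcal{W}_r(u+X,X)\leq(\mathbb{E}[|u|^r])^{1/r}=|u|$, while the matching lower bound $\mathcal{W}_r(u+X,X)\geq |\mathbb{E}[u+X]-\mathbb{E}[X]|=|u|$ is precisely~\eqref{e:meandifference} applied with $X_1=u+X$ and $X_2=X$.

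For item (5), I would prove the equivalence via the chain (i)$\Rightarrow$(ii)$\Leftrightarrow$(iii)$\Rightarrow$(i). For (i)$\Rightarrow$(ii), the reverse Minkowski inequality for any coupling $\pi$ between $X_n$ and $X$ gives
$$\bigl|(\mathbb{E}[|X_n|^r])^{1/r}-(\mathbb{E}[|X|^r])^{1/r}\bigr|\leq (\mathbb{E}_\pi[|X_n-X|^r])^{1/r},$$
and infimizing over $\pi$ yields the moment convergence; weak convergence is then obtained by testing against bounded Lipschitz functions and using the Kantorovich--Rubinstein dual bound $\mathcal{W}_1\leq \mathcal{W}_r$. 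The equivalence (ii)$\Leftrightarrow$(iii) is Vitali's convergence theorem applied to $(|X_n|^r)_{n\in\mathbb{N}}$, since weak convergence plus convergence of $r$-th absolute moments is equivalent to weak convergence plus uniform integrability of the $r$-th powers. For the key implication (iii)$\Rightarrow$(i), I invoke the Skorokhod representation theorem to realize $(X_n,X)$ on a common probability space with $X_n\to X$ almost surely; the uniform integrability of $(|X_n|^r)_n$ combined with the pointwise bound $|X_n-X|^r\leq 2^{r-1}(|X_n|^r+|X|^r)$ yields uniform integrability of $(|X_n-X|^r)_n$, so Vitali gives $\mathbb{E}[|X_n-X|^r]\to 0$; the Skorokhod coupling then bounds $\mathcal{W}_r(X_n,X)^r$ from above and concludes the proof.

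The main obstacle is item (5), whose clean statement requires careful use of the Skorokhod representation theorem together with Vitali's uniform integrability criterion to pass from weak convergence plus moment control to $\mathcal{W}_r$-convergence; items (1)--(4) are all routine consequences of the definition~\eqref{eq:Wr} and the already-established inequality~\eqref{e:meandifference}.
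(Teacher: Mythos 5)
Your proposal is correct, but there is nothing in the paper to compare it against: Lemma~\ref{lem:basic} is stated in Appendix~\ref{ap:A} without proof, the authors deferring to the classical references \cite{Panaretos, Villani}. Your argument is exactly the standard one found there (gluing lemma for the triangle inequality, pushforward of couplings under affine bijections for items (2)--(3), synchronous coupling plus the mean bound \eqref{e:meandifference} for item (4), and Villani's characterization of $\mathcal{W}_r$-convergence via Skorokhod representation and Vitali's theorem for item (5)), so it fills the gap the paper leaves open rather than diverging from it. Two small points: in item (5)(iii) the lemma's phrasing ``$(\mathbb{E}[\|X_n\|^r])_{n}$ is uniformly integrable'' is a slip (a sequence of numbers cannot be uniformly integrable); you silently and correctly read it as uniform integrability of the family $(|X_n|^r)_n$, and it would be worth saying so explicitly. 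Also, for the direction (ii)$\Rightarrow$(iii) the phrase ``is Vitali's convergence theorem'' is slightly too quick --- Vitali gives (iii)$\Rightarrow$(ii) once weak convergence is upgraded to almost sure convergence, whereas (ii)$\Rightarrow$(iii) needs the converse observation that weak convergence together with convergence of the $r$-th moments forces uniform integrability of $(|X_n|^r)_n$ (a Scheff\'e-type argument); both are standard, but they are different statements.
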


\bigskip 
\section{\textbf{Proofs}}\label{a:proofs}

\subsection{\textbf{Proofs of Subsection~\ref{ss:affineergodicinterpolation}}}\label{a:proofs3.1}

\begin{proof}[\textbf{Proof of Lemma~\ref{prop:affine}: }] 
Let $\mathcal{T}$ be any coupling between $X$ and $v+RX$.
Then we have
\begin{equation}
\begin{split}
|v+(R-I_d)\mathbb{E}[X]|&=|\mathbb{E}[X]-\mathbb{E}[v+RX]|=
|\mathbb{E}_{\mathcal{T}}[X]-\mathbb{E}_{\mathcal{T}}[v+RX]|\\
&\leq 
\mathbb{E}_{\mathcal{T}}[|X-(v+RX)|].
\end{split}
\end{equation}\
 Using Jensen's inequality and optimizing over all $\mathcal{T}$ we obtain
\begin{equation}
|v+(R-I_d)\mathbb{E}[X]|\leq \mathcal{W}_1(X,v+RX)\leq 
\mathcal{W}_q(X,v+RX)
\end{equation}
for any $1\leq q\leq r$. 
\end{proof}

\begin{proof}[\textbf{Proof of Theorem~\ref{th:affineGauss}}: ]
Since $\xi_1$ has a Gaussian distribution with vector mean $m$ and covariance matrix $\Xi$, the representation~\eqref{eq:reprelimit} yields that the distribution of $BX_t(x)$ is also Gaussian with vector mean 
\begin{align}\label{eq:meant}
m_t(x):&=B\left(\cQ^tx+\sum_{j=0}^{t-1}\cQ^j \Sigma m\right)=B\cQ^tx+B\left(\sum_{j=0}^{t-1}\cQ^j\right)\Sigma m\\
&=B\cQ^tx+B(I_d-\cQ^{t})(I_d-\cQ)^{-1}\Sigma m,
\end{align}
where in the last equality we have used the Neumann sum,
 and covariance matrix 
\begin{align}\label{eq:neumann0}
\Sigma_t&=B\mathsf{Cov}\left(\sum_{j=0}^{t-1} \cQ^j\Sigma\xi_{j}\right) B^T=\sum_{j=0}^{t-1}B\mathsf{Cov}\left( \cQ^j\Sigma\xi_{j}\right) B^T\\
&=\sum_{j=0}^{t-1}B\mathsf{Cov}\left( \cQ^j\Sigma\xi_{1}\right) B^T
=\sum_{j=0}^{t-1}B \cQ^j\Sigma\Xi(B\cQ^j\Sigma)^T,
\end{align}
where $\Xi$ is the covariance matrix of $\xi_1$.
Hence, the distribution of $BX_\infty$ is Gaussian with
with vector mean 
\begin{align}\label{eq:meaninf}
m_\infty:=B(I_d-\cQ)^{-1}\Sigma m
\end{align}
 and covariance matrix 
\begin{align}\label{eq:neumann1}
\Sigma_\infty&
=B\left(\sum_{j=0}^{\infty} \cQ^j\Sigma \Xi (\cQ^j\Sigma)^T\right) B^T.
\end{align} 
We observe that 
$BX_\infty\stackrel{\mathsf{d}}{=}\Sigma^{1/2}_\infty \mathcal{N}+m_\infty$ and
\begin{align*}
BX_t(x)&\stackrel{\mathsf{d}}{=}\Sigma^{1/2}_t \mathcal{N}+m_t(x)\stackrel{\mathsf{d}}{=}\Sigma^{1/2}_t\Sigma^{-1/2}_\infty (BX_\infty-m_\infty)+m_t(x)\\
&\stackrel{\mathsf{d}}{=}\Sigma^{1/2}_t\Sigma^{-1/2}_\infty BX_\infty+(m_t(x)-\Sigma^{1/2}_t\Sigma^{-1/2}_\infty m_\infty)
\end{align*}
for any $r\geq 1$.
By the upper bound in~\eqref{eq:thineq} of Theorem~\ref{thm:affine} we have
\begin{equation}
\begin{split}
\mathcal{W}_r(BX_t(x),BX_\infty)&\leq
(\mathbb{E}[|(\Sigma^{1/2}_t\Sigma^{-1/2}_\infty-I_d)BX_\infty+(m_t(x)-\Sigma^{1/2}_t\Sigma^{-1/2}_\infty m_\infty)|^{r}])^{1/r}\\
&= (\mathbb{E}[|(\Sigma^{1/2}_t\Sigma^{-1/2}_\infty-I_d)(\Sigma^{1/2}_\infty \mathcal{N}+m_\infty)+(m_t(x)-\Sigma^{1/2}_t\Sigma^{-1/2}_\infty m_\infty)|^{r}])^{1/r}\\
&= (\mathbb{E}[|(\Sigma^{1/2}_t-\Sigma^{1/2}_\infty)\mathcal{N}+(m_t(x)-m_\infty)|^{r}])^{1/r}.
\end{split}
\end{equation}
The preceding inequality with the help of 
Minkowski's inequality and the matrix norm submultiplicativity ($|Mx|\leq \|M\|_F |x|$) yields 
\begin{equation}
\mathcal{W}_r(BX_t(x),BX_\infty)\leq \|\Sigma^{1/2}_t-\Sigma^{1/2}_\infty\|_F\left(\mathbb{E}[|\mathcal{N}|^r]\right)^{1/r}+|m_t(x)-m_\infty|
\end{equation}
for any $r\geq 1$.
By the Hemmen--Ando inequality 
(see Proposition~2.1 in \cite{HemmenAndo}) we have
\begin{equation}
\|\Sigma^{1/2}_t-\Sigma^{1/2}_\infty\|_F\leq \frac{1}{\lambda_{-}}\|\Sigma_t-\Sigma_\infty\|_F,
\end{equation}
where $\lambda_{-}$ is the smallest eigenvalue of $\Sigma_\infty$ 
due to 
\begin{align} \label{e:Endo}
y(\Sigma^{1/2}_t+\Sigma^{1/2}_\infty) y^T
= y\Sigma^{1/2}_ty^T+y\Sigma^{1/2}_\infty y^T
\geq 0 + \lambda_- |y|^2 \quad \textrm{ for all }\quad y\in \mathbb{R}^d.
\end{align}
Then we have 
\begin{equation}\label{eq:ineando}
\mathcal{W}_r(BX_t(x),BX_\infty)\leq 
\frac{1}{\lambda_{-}}\|\Sigma_t-\Sigma_\infty\|_F
\left(\mathbb{E}[|\mathcal{N}|^r]\right)^{1/r}+|m_t(x)-m_\infty|.
\end{equation}
We note that in contrast to $\|\cdot \|_F$, the norms $\|\cdot \|_1$ and $\|\cdot \|_*$ are not invariant for the transpose operator. 
By~\eqref{eq:neumann0},~\eqref{eq:neumann1} and Lemma~\ref{lem:*}\,(3) we have 

\begin{equation}\label{eq:difcov}
\begin{split}
\|\Sigma_t-\Sigma_\infty\|_F&= \left\|\sum_{j=0}^{t-1}B \cQ^j\Sigma \Xi (B\cQ^j\Sigma)^T-\sum_{j=0}^{\infty}B \cQ^j\Sigma \Xi (B\cQ^j\Sigma)^T
\right\|_F
\\
&=\left\|B\left(\sum_{j=t}^{\infty} \cQ^j\Sigma \Xi (\cQ^j\Sigma)^T\right) B^T\right\|_F\\
&\leq \|B\|_F^2 \sum_{j=t}^\infty  \|\cQ^j \Sigma \Xi \Sigma^T(Q^T)^j\|_F \\
&\leq \|B\|_F^2 \|\Sigma\|_F^2 \|\Xi\|_F \sum_{j=t}^\infty  \|\cQ^j\|_F \|(\cQ^T)^j\|_F\\
&\leq \|B\|_F^2 \|\Sigma\|_F^2 \|\Xi\|_F \sum_{j=t}^\infty  \|\cQ^j\|_F^2\\
&\leq C_*^2 \|B\|_F^2 \|\Sigma\|_F^2 \|\Xi\|_F \sum_{j=t}^\infty  \|\cQ^j\|_*^2\\
&\leq C_*^2 \|B\|_F^2 \|\Sigma\|_F^2 \|\Xi\|_F \sum_{j=t}^\infty  \|\cQ\|_*^{2j}\\
&\leq C_*^2 \|B\|_F^2 \|\Sigma\|_F^2 \|\Xi\|_F \frac{\|\cQ\|_*^{2t}}{1-\|\cQ\|_*^{2}}.
\end{split}
\end{equation}

Finally, by~\eqref{eq:meant} and~\eqref{eq:meaninf} we have 
\begin{equation}\label{eq:difmean}
\begin{split}
|m_t(x)-m_\infty|&=|B\cQ^tx+B(I_d-\cQ^{t})(I_d-\cQ)^{-1}\Sigma m-B(I_d-\cQ)^{-1}\Sigma m|\\
&=|B\cQ^t(x-(I_d-\cQ)^{-1}\Sigma m)|.
\end{split}
\end{equation}
Combining~\eqref{eq:difcov} and~\eqref{eq:difmean} in~\eqref{eq:ineando} we obtain the upper bound 
in~\eqref{eq:thgauss}.

The lower bound in~\eqref{eq:thgauss} follows from the lower bound in~\eqref{eq:thineq} of Theorem~\ref{thm:affine} with the help of~\eqref{eq:difmean}.
\end{proof}

\begin{proof}[\textbf{Proof of Corollary~\ref{cor:projected}:}]
By~\eqref{eq:meant},~\eqref{eq:neumann0} we have 
$\langle v, X_t(x) \rangle$ has Gaussian distribution with mean and variance
\begin{align}
\langle v,m_t(x)\rangle=\langle v,
\cQ^tx+(I_d-\cQ^{t})(I_d-\cQ)^{-1}\Sigma m
\rangle
\end{align}
and
\begin{align}
v^T\Sigma_t v
=\sum_{j=0}^{t-1}v^T \cQ^j\Sigma\Xi(\cQ^j\Sigma)^Tv,
\end{align}
respectively. Similarly, \eqref{eq:meaninf} and~\eqref{eq:neumann1} yield that
$\langle v, X_\infty \rangle$ has Gaussian distribution with mean and variance
\begin{align}
\langle v,m_\infty\rangle=\langle v,
(I_d-\cQ)^{-1}\Sigma m
\rangle
\end{align}
and
\begin{align}
v^T\Sigma_\infty v
=\sum_{j=0}^{\infty}v^T \cQ^j\Sigma\Xi(\cQ^j\Sigma)^Tv
\end{align}
By Remark~\ref{rem:commonOT} we have 
\begin{equation}
\begin{split}
\mathcal{W}_r(\langle v, X_t(x) \rangle,\langle v, X_\infty \rangle)&\leq |\langle v,\cQ^t (x-(I_d-\cQ)^{-1}\Sigma m)\rangle|\\
&\quad+|\sqrt{v^T\Sigma_t v}-\sqrt{v^T\Sigma_\infty v}|(\mathbb{E}[|\mathcal{N}|^r])^{1/r},
\end{split}
\end{equation}
where $\mathcal{N}$ has standard Gaussian distribution on $\mathbb{R}$.
We recall that
\[
(\mathbb{E}[|\mathcal{N}|^r])^{1/r}=\frac{\sqrt{2}(\Gamma((r+1)/2))^{1/r}}{\sqrt{\pi}^{1/r}},
\]
where $\Gamma$ is the usual Gamma function.
Since
\begin{align}
|\sqrt{v^T\Sigma_t v}-\sqrt{v^T\Sigma_\infty v}|&=
\frac{|v^T\Sigma_t v-v^T\Sigma_\infty v|}{|\sqrt{v^T\Sigma_t v}+\sqrt{v^T\Sigma_\infty v}|}\\
&
=\frac{1}{|\sqrt{v^T\Sigma_t v}+\sqrt{v^T\Sigma_\infty v}|}
|\sum_{j=t}^{\infty}v^T \cQ^j\Sigma\Xi(\cQ^j\Sigma)^Tv|\\
&\leq 
\frac{C_*^2}{\sqrt{v^T\Sigma_t v+v^T\Sigma_\infty v}}|v|^2|\|\Sigma\|_F^2|\|\Xi\|_F
\frac{\|\cQ\|_*^{2t}}{1-\|\cQ\|_*^{2}},
\end{align}
one can deduce the right-hand side of~\eqref{eq:cor}.
\end{proof}

\begin{proof}[\textbf{Proof of Corollary~\ref{cor:PWdII}:}]
By Remark~\ref{rem:commonOT} we have 
\begin{equation}
\begin{split}
\mathcal{W}_r(\langle v, X_t(x) \rangle,\langle v, X_\infty \rangle)&\leq |\langle v,\cQ^t (x-(I_d-\cQ)^{-1}\Sigma m)\rangle|+|\sqrt{v^T\Sigma_t v}-\sqrt{v^T\Sigma_\infty v}|(\mathbb{E}[|\mathcal{N}|^r])^{1/r}.
\end{split}
\end{equation}
We recall that
\[
(\mathbb{E}[|\mathcal{N}|^r])^{1/r}=\frac{\sqrt{2}(\Gamma((r+1)/2))^{1/r}}{\sqrt{\pi}^{1/r}}
\]
and
\begin{align}
|\sqrt{v^T\Sigma_t v}-\sqrt{v^T\Sigma_\infty v}|&=
\frac{|v^T\Sigma_t v-v^T\Sigma_\infty v|}{|\sqrt{v^T\Sigma_t v}+\sqrt{v^T\Sigma_\infty v}|}\\
&
=\frac{1}{|\sqrt{v^T\Sigma_t v}+\sqrt{v^T\Sigma_\infty v}|}
\left|\sum_{j=t}^{\infty}v^T \cQ^j\Sigma\Xi(\cQ^j\Sigma)^Tv\right|.
\end{align}
Since $\Sigma\Xi \Sigma^T \cQ=\cQ \Sigma\Xi \Sigma^T$, we have
$\Sigma\Xi \Sigma^T \cQ^j=\cQ^j \Sigma\Xi \Sigma^T$ for all $j\in \mathbb{N}$. Then we have
\begin{equation}
\begin{split}
 \sum_{j=t}^{\infty}v^T \cQ^j(\Sigma\Xi \Sigma^T)(\cQ^j)^Tv&=
 v^T (\Sigma\Xi \Sigma^T)\sum_{j=t}^{\infty} \cQ^j(\cQ^j)^Tv\\
 &=v^T (\Sigma\Xi \Sigma^T)
\sum_{j=t}^{\infty} \cQ^{t} \cQ^{-t}\cQ^j(\cQ^j)^T((\cQ)^T)^{-t}((\cQ)^T)^{t}v\\
&=
v^T (\Sigma\Xi \Sigma^T) \cQ^{t} S((\cQ)^T)^{t}v =
v^T  \cQ^{t} (\Sigma\Xi \Sigma^T)S((\cQ)^T)^{t}v,
\end{split}
\end{equation}
where 
\[
S=\sum_{j=0}^{\infty}\cQ^j(\cQ^j)^T=(I-\cQ \cQ^{T})^{-1}.
\]
Note that $\Sigma\Xi \Sigma^T$ and $S$ are  non-negative definite symmetric square matrices. Moreover, $\Sigma\Xi \Sigma^T \cQ^T=\cQ^T \Sigma\Xi \Sigma^T$, which implies $(\Sigma\Xi \Sigma^T) S=S(\Sigma\Xi \Sigma^T)$ and the symmetry of $S(\Sigma\Xi \Sigma^T).$
Hence, the non-negative definite square root matrix of $S(\Sigma\Xi \Sigma^T)$ exists and satisfies $\Sigma\Xi \Sigma^T S=(\Sigma\Xi \Sigma^T S)^{1/2}(\Sigma\Xi \Sigma^T S)^{1/2}$.
Therefore,
\begin{equation}
\begin{split}
|v^T  \cQ^{t} (\Sigma\Xi \Sigma^T)S((\cQ)^T)^{t}v|&=|(\Sigma\Xi \Sigma^T S)^{1/2}\cQ^t v|^2,
\end{split}
\end{equation}
which implies~\eqref{eq:PWDII}.
\end{proof}

\begin{proof}[\textbf{Proof of 
Corollary \ref{cor:aeiistable}:}]
Recall that $X_t(x)\stackrel{\mathsf{d}}{=}\lambda^t +\sum_{j=0}^{t-1} \cQ^j \Sigma\xi_{j}$, $t\in \mathbb{N}$. Then the characteristic function of $X_t(x)-\lambda^t x$ is given by
\begin{equation}\label{eq:characdos}
\begin{split}
&\mathbb{E}\left[\exp\left(\ii \left\langle \sum_{j=0}^{t-1} \cQ^j \Sigma\xi_{j}, u\right\rangle\right)\right]
=
\mathbb{E}\left[\exp\left(\ii \left\langle \sum_{j=0}^{t-1} |\lambda|^j \xi_{j}, u\right\rangle\right)\right]
=
\prod_{j=0}^{t-1}
\mathbb{E}\left[\exp\left(\ii \left\langle  |\lambda|^j \xi_{j}, u\right\rangle\right)\right]\\
&\quad =
\prod_{j=0}^{t-1}
\mathbb{E}\left[\exp\left(\ii \left\langle   \xi_{0}, |\lambda|^j u\right\rangle\right)\right]
=
\prod_{j=0}^{t-1}\exp(-c_0(|\lambda|^j|u|)^{\alpha})
=
\exp\left(-c_0|u|^{\alpha}\left(\frac{1-|\lambda|^{\alpha t}}{1-|\lambda|^{\alpha }}\right)\right),
\end{split}
\end{equation}
which implies \eqref{eq:scala}. 
The inequalities \eqref{eq:thineqdos2} follows directly by Theorem~\ref{thm:affine} with the help of Theorem~1.13 in \cite{KyprianouPardo}.
\end{proof}

\subsection{\textbf{Proofs of Subsection~\ref{ss:genericexponentialbounds}}}\label{a:proofs3.2}

\begin{proof}[Proof of Theorem~\ref{thm:genericARMA}]
We start with the second upper bound of \eqref{e:genericARMA}. 
For $p\geq 1$ note that the usual bounds of the Wasserstein distance, 
\cite[Lemma~2]{Mariucci18}, the Minkowski inequality combined with \eqref{e:Kd} yield 
\begin{equation}
\begin{split}
\cW(X_t(x),X_\infty)
|&\leq |\cQ^t x|+\cW(\sum_{j=1}^{t} \cQ^j\Sigma\xi_{j},\sum_{j=1}^{\infty} \cQ^j\Sigma\xi_{j})\\
&=  |\cQ^t x|+\cW(\sum_{j=1}^{t} \cQ^j\Sigma\xi_{j},\sum_{j=1}^{t} \cQ^j\Sigma\xi_{j}+\sum_{j=t+1}^{\infty} \cQ^j\Sigma\xi_{j})\\
&\leq  
|\cQ^t x|+
\left(
\mathbb{E}\left[|
\sum_{j=t+1}^{\infty} \cQ^j\Sigma\xi_{j}|^p \right]\right)^{1/p}\\
&\leq  |\cQ^t x|+
\left(\sum_{j=t+1}^{\infty}
\mathbb{E}\left[| \cQ^j \Sigma\xi_{j}|^p \right]\right)^{1/p}\\
&\leq  |\cQ^t x|+
\left(\sum_{j=t+1}^{\infty}
\mathbb{E}\left[K^p_{d}\| \cQ^j\|^p_{*}|\Sigma\xi_{j}|^p \right]\right)^{1/p}\\
&=  |\cQ^t x|+
K_{d}\left(\mathbb{E}\left[|\Sigma\xi_1|^p \right]\sum_{j=t+1}^{\infty}\| \cQ^j\|^p_{*}
\right)^{1/p}\\
&=  |\cQ^t x|+
K_{d}\left(\mathbb{E}\left[|\Sigma\xi_1|^p_1 \right]\right)^{1/p}\left(\sum_{j=t+1}^{\infty}
(\| \cQ\|^{p})^j_{*}
\right)^{1/p}.
\end{split}
\end{equation}
We note that
\begin{equation}
\left(\sum_{j=t+1}^{\infty}
(\| \cQ\|^{p}_{*})^j
\right)^{1/p}=\frac{\|\cQ\|^{t+1}_{*}}{(1-\|\cQ\|^p_{*})^{1/p}},
\end{equation}
which implies
\[
\cW(X_t(x), X_\infty)\leq 
 |\cQ^t x|+K_d
\left(\mathbb{E}\left[|\Sigma\xi|^p_1 \right]\right)^{1/p}\frac{\|\cQ\|^{t+1}_{*}}{(1-\|\cQ\|^p_{*})^{1/p}}.
\]
We continue with the first upper bound of \eqref{e:genericARMA}. 
Due to the Markovianity and ergodicity of the \eqref{eq:model} we have 
\begin{align*}
 \cW(X_t(x),X_\infty)&\leq \int_{\mathbb{R}^d}\cW(X_t(x),X_t(y)) \mathbb{P}(X_\infty\in \ud y)\\
 &\leq \int_{\mathbb{R}^d}|\cQ^t(x-y)|\mathbb{P}(X_\infty\in \ud y) =\mathbb{E}[|\cQ^t(x-X_\infty)|].
\end{align*}
Finally we apply Lemma~\ref{lem:*}\,(2). 

\noindent Now, we show the lower bound. 
Recall that
\begin{equation}\label{e:Neumann}
\sum_{j=t}^{\infty} \cQ^j=\cQ^t(I-\cQ)^{-1}=
 (I-\cQ)^{-1}\cQ^t
 \end{equation}
 and that $(\xi_{j})_{j}$ are identically distributed.
Using \eqref{e:meandifference} and \eqref{e:Neumann} we obtain 
\begin{align*}
\cW(X_t(x),X_\infty) 
&\geq 
 \left|\cQ^t x+\mathbb{E}[\sum_{j=1}^{t-1} \cQ^j\Sigma \xi_{j}]-\mathbb{E}[\sum_{j=1}^{\infty} \cQ^j\Sigma \xi_{j}]\right|
= 
|\cQ^t(x-(I-\cQ)^{-1}\mathbb{E}[\Sigma \xi_1])| .
\end{align*}
\end{proof}

\begin{proof}[Proof of Corollary \ref{cor:schursliced}]
Similarly to the proof of Theorem~\ref{thm:genericARMA} we have the following. 
For $p\geq 1$ 
\begin{equation}
\begin{split}
\cW(\langle X_t(x), v\rangle ,\langle X_\infty, v\rangle )
&\leq  |\langle \cQ^t x, v\rangle|+
K_{d}|v|\left(\mathbb{E}\left[|\Sigma\xi_1|^p_1 \right]\right)^{1/p}\left(\sum_{j=t+1}^{\infty}
(\| \cQ\|^{p})^j_{*}
\right)^{1/p},
\end{split}
\end{equation}
which implies
\[
\cW(\langle X_t(x), v\rangle , \langle X_\infty, v\rangle)\leq 
 |\langle \cQ^t x, v\rangle |+K_d |v|
\left(\mathbb{E}\left[|\Sigma\xi|^p_1 \right]\right)^{1/p}\frac{\|\cQ\|^{t+1}_{*}}{(1-\|\cQ\|^p_{*})^{1/p}}.
\]
Hence by Remark~\ref{rem:sliced} it follows 
\begin{align*}
S\mathcal{W}_p( X_t(x), X_\infty) 
&\leq \frac{\Gamma(\frac{d}{2})}{\Gamma\left(\frac{d+1}{2}\right)\pi^{\frac{1}{2}}} |Q^t x| + K_d
\left(\mathbb{E}\left[|\Sigma\xi|^p_1 \right]\right)^{1/p}\frac{\|\cQ\|^{t+1}_{*}}{(1-\|\cQ\|^p_{*})^{1/p}}.
\end{align*}
Moreover, 
\begin{align*}
 S\cW(X_t(x), X_\infty)
& = \int_{\{|v|=1\}} \cW(\langle X_t(x), v\rangle ,\langle X_\infty, v\rangle ) \ud\frac{\mathcal{H}_d(v)}{A_d}\\
  &\leq \int_{\{|v|=1\}}\mathbb{E}[|\langle \cQ^t(x-X_\infty), v \rangle|]\ud\frac{\mathcal{H}_d(v)}{A_d}\\
& = \mathbb{E}\left[|\cQ^t(x-X_\infty)| \int_{\{|v|=1\}}| |\langle \frac{\cQ^t(x-X_\infty)}{|\cQ^t(x-X_\infty|}, v \rangle|\ud\frac{\mathcal{H}_d(v)}{A_d}\right]\\  
 &\leq \frac{\Gamma(\frac{d}{2})}{\Gamma\left(\frac{d+1}{2}\right)\pi^{\frac{1}{2}}} \mathbb{E}[|\cQ^t(x-X_\infty)|].
\end{align*}

\noindent Now, we show the lower bound. 
Using \eqref{e:meandifference} and Remark~\ref{rem:sliced} we obtain 
\begin{align*}
S\cW(X_t(x),X_\infty) 
&\geq 
 \frac{\Gamma(\frac{d}{2})}{\Gamma\left(\frac{d+1}{2}\right)\pi^{\frac{1}{2}}} |\cQ^t(x-(I-\cQ)^{-1}\mathbb{E}[\Sigma \xi_1])|.
\end{align*}
\end{proof}

\begin{proof}[Proof of Lemma~\ref{lem:Lyapunov}]
For each $z\in \mathbb{R}^d\setminus\{0\}$ and $t\in \mathbb{N}$, let $\widetilde{z}(0):=U^{-1}z\in \mathbb{C}^d$ and 
$\widetilde{z}(t)\in \mathbb{C}^d$
given by
$\widetilde{z}_j(t):=q^t_j\widetilde{z}_j(0)$, $j=1,\ldots,d$.
For any $t\in \mathbb{N}$ we obtain
\[
\cQ^tz=UD^tU^{-1}z=UD^t\widetilde{z}(0)=U\widetilde{z}(t)
\]
and
\[
|\cQ^t z|^2=
\sum_{j=1}^{d}\left|\sum_{k=1}^{d}
U_{j,k}\widetilde{z}_k(t)\right|^2.
\]
In the sequel, we obtain the right-hand side of~\eqref{eq:cotaarriba}. By the Cauchy--Schwarz inequality we have 
\begin{equation}
\begin{split}
|\cQ^t z|^2
&=
\sum_{j=1}^{d}\left|\sum_{k=1}^{d}
U_{j,k}q^t_k \widetilde{z}_k(0)\right|^2
\leq 
\sum_{j=1}^{d}\left(\sum_{k=1}^{d} |U_{j,k}|^2\right)
\left(\sum_{\ell=1}^{d} |q_\ell|^{2t}|\widetilde{z}_{\ell}(0)|^2\right)\\
&= \|U\|^2_{\mathrm{F}}
\left(\sum_{\ell=1}^{d} |q_\ell|^{2t}|(U^{-1}z)_{\ell}|^2\right).
\end{split}
\end{equation}
We continue with  the left-hand side of~\eqref{eq:cotaarriba}. Since $U$ is invertible we may 
write for any $z\in \mathbb{R}^d$ that 
$|D^t U^{-1}z| = |U^{-1} U D^t U^{-1}z|\leq \|U^{-1}\|_{F} |U D^t U^{-1}z|$. 
such that $|U D^t U^{-1}z|\geq \|U^{-1}\|_{\mathrm{F}}^{-1} |D^t U^{-1}z|$. Consequently, 
\begin{align*}
|\cQ^t z|^2
&=
\left|U D^t \widetilde{z}_k(0)\right|^2
\geq  \|U^{-1}\|_{F}^{-2} \left|D^t \widetilde{z}_k(0)\right|^2
= \|U^{-1}\|_{F}^{-2}\sum_{j=1}^d |q_i|^{2t} |(U^{-1}z)_j|^2.   
\end{align*}
\end{proof}

\begin{proof}[Proof of Lemma \ref{thm:parallel}] The proof of \eqref{e:Panaretos} is given in \cite[p.8]{PanaretosZemel}.
For $p\geq 1$ we cite the tensorization result \cite[Lemma~3]{Mariucci18} for the upper bound, 
while the lower bound follows from \eqref{e:meandifference}: 
\begin{align}
\cW((X^{(1)}_t(x), \cdots , X^{(n)}_t(x)) ,(X_\infty^{(1)}, \cdots ,  X_\infty^{(n)}))&\geq |\mathbb{E}[(X^{(1)}_t(x), \cdots , X^{(n)}_t(x))-(X^1_\infty, \cdots , X^n_\infty)]|\\
& \geq |(\mathbb{E}[X^{(1)}_t(x)-X^{1}_\infty], \cdots ,\mathbb{E}[X^{(1)}_t(x)-X^{1}_\infty])|\\
&\geq 
\sqrt{n}|\mathbb{E}[X^{(1)}_t(x)-X^{1}_\infty]|.
\end{align}
\end{proof}

\begin{proof}[Proof of Corollary \ref{cor:empirical}]
Equation~\ref{e:empirical} is satisfied by direct calculation due to the linearity. 
Hence, Theorem~\ref{thm:genericARMA} implies the statement. It only remains to estimate 
$(\mathbb{E}[|\zeta^{(n)}_1|^p])^{1/p}$. Note that $\mathbb{E}[\zeta^{(n)}_t]=\mathbb{E}[ \xi^{(1)}_1]$ and using  the Minkowski inequality we have
\[
(\mathbb{E}[|\zeta^{(n)}_1|^p])^{1/p}\leq 
(\mathbb{E}[|\xi_1^{(1)}|^p])^{1/p}\quad \textrm{ for all }\quad n\in \mathbb{N}.
\]
\end{proof}

\section{\textbf{The numerical experiments}}\label{ap:numerics}
    
\noindent The simulations of Subsection~\ref{ss:numerical} are based on the model \eqref{e:genericARMA} under Hypothesis~\ref{hyp:hyperbolic} and Hypothesis~\ref{hyp:moment}. The source code to reproduce the results is available on \url{https://github.com/phcsta/sliced-wasserstein-bounds}.

The simulations use the bounds derived for Gaussian noise given in Corollary~\ref{cor:PWdII}. 
More precisely, 
\begin{equation}
\begin{split}
\mathcal{W}_r(\langle v, X_t(x) \rangle,\langle v, X_\infty \rangle)&\leq |\langle v,\mathcal{Q}^t (x-(I_d-\mathcal{Q})^{-1}\Sigma m)\rangle|\\
&\quad+\frac{1}{\sqrt{v^T\Sigma_\infty v}}
\frac{\sqrt{2}(\Gamma((r+1)/2))^{1/r}}{\pi^{1/(2r)}} \|S^{1/2}\mathcal{Q}^t v\|^2
\end{split}
\end{equation}
with $m=0$, $B=I_3$ and $\Sigma = I_3$. We define the lower bound $L_B$ and the upper bound $U_B$, respectively, by
\begin{align*}
L_B := |\langle v,\mathcal{Q}^t x\rangle|\quad \text{and} \quad
U_B := L_B + \frac{1}{\sqrt{v^T\Sigma_\infty v}}
\frac{\sqrt{2}(\Gamma((r+1)/2))^{1/r}}{\pi^{1/(2r)}} \|S^{1/2}\mathcal{Q}^t v\|^2.
\end{align*}
We start with the observation that 
\begin{enumerate}
    \item \textbf{The marginal law:} $\langle v, X_t \rangle \stackrel{\mathsf{d}}{=} \mathcal{N}(\mu_{t}, \sigma_{t}^2)$,
    \item \textbf{The stationary law:} $\langle v, X_\infty \rangle \stackrel{\mathsf{d}}{=} \mathcal{N}(\mu_{\infty}, \sigma_{\infty}^2)$,
\end{enumerate}
where $\mu_{t},\, \mu_{\infty},\, \sigma_{t}^2,\, \sigma_{\infty}^2$ are given below.

\noindent\textbf{Marginal distribution $\langle v, X_t \rangle$:}
Recall that the mean and the covariance of $X_t$, starting from $X_0(x) = x$ and with noise covariance $\Sigma$, are given by
\[
  \mathbb{E}[X_t] = \mathcal{Q}^t x \quad \textrm{ and } \quad \Sigma_t:= \mathsf{Cov}(X_t) = \sum_{k=0}^{t-1} \mathcal{Q}^k \Sigma (\mathcal{Q}^k)^T.
\]
Projecting these onto $v$ gives the 1-d parameters
\begin{align*}
   \mu_{t}= v^T \mathcal{Q}^t x\quad \textrm{ and } \quad 
 \sigma_{t}^2 = v^T \Sigma_t v.
\end{align*}

\noindent\textbf{Limiting distribution $\langle v, X_\infty \rangle$:}
Assuming mean-zero noise, the mean and covariance of the stationary distribution $X_\infty$ are
\[
\mathbb{E}[X_\infty] = 0 \quad \textrm { and } \quad \mathsf{Cov}(X_\infty) = \Sigma_\infty,
\]
where $\Sigma_\infty$ is the limiting covariance matrix $\Sigma_\infty$. 
Recall that $\Sigma=I_3$.
It is not hard to see that $\Sigma_\infty$ is the
unique positive definite solution to the discrete-time Lyapunov equation
\[
\Sigma_\infty = \mathcal{Q}\Sigma_\infty\mathcal{Q}^T + \Sigma.
\]
See for instance \cite{Bartels1972, Simoncini2016}. Projecting $X_\infty$ onto $v\neq 0$ yields the scalar parameters
\begin{align*}
    \mu_{\infty} = \mathbb{E}[\langle v, X_\infty \rangle] = 0\quad \textrm{and} \quad
    \sigma_{\infty}^2 = \mathsf{Var}(\langle v, X_\infty \rangle) = v^T \Sigma_\infty v.
\end{align*}
           
\noindent\textbf{Closed-form solution for $\mathcal{W}_2$.}
For any two one-dimensional Gaussian distributions, 
$P_1 = \mathcal{N}(\mu_1, \sigma_1^2)$ and $P_2 = \mathcal{N}(\mu_2, \sigma_2^2)$, recall that the Wasserstein-2 distance 
is given by
\begin{equation}
\label{eq:w2_1d_general}
    \mathcal{W}_2(P_1, P_2) = \sqrt{(\mu_1 - \mu_2)^2 + (\sigma_1 - \sigma_2)^2},
\end{equation}
which yields
\begin{align*}
    \mathcal{W}_2(\big\langle v, X_t \rangle, \langle v, X_\infty \big\rangle) 
     = \sqrt{\mu_{t}^2 + (\sigma_{t} - \sigma_{\infty})^2}.
\end{align*}
In Subsection~\ref{ss:numerical} we present the results of the implementation 
by the following the pseudo-code \ref{alg:wasserstein}. The Python implementation provided in\\
\url{https://github.com/phcsta/sliced-wasserstein-bounds} uses the Pythagorean formula \eqref{eq:w2_1d_general}.

\begin{algorithm}[H]
\caption{Computation of Sliced Wasserstein Bounds Along a Fixed Direction}
\label{alg:wasserstein}
\begin{algorithmic}[1]

\Require System matrices $(\mathbf{Q}, \mathbf{\Sigma})$, direction $\mathbf{v}$, initial state $\mathbf{x}_0$, time horizon $T_{\max}$.
\Require Evaluation mode $\mathcal{M} \in \{\texttt{ClosedForm}, \texttt{NumInt}, \texttt{MonteCarlo}\}$.
\If{$\mathcal{M} = \texttt{MonteCarlo}$}
  \Require Number of samples $N$.
\EndIf

\Ensure Sequences $L_B^{(t)}$, $U_B^{(t)}$, and $\mathcal{W}_r^{(t)}$ over $t=0,\ldots,T_{\max}$.

\Statex \textbf{1. Stationary Quantities}
\State Solve Lyapunov eq: $\mathbf{\Sigma}_\infty = \mathbf{Q}\mathbf{\Sigma}_\infty\mathbf{Q}^\top + \mathbf{\Sigma}$.
\State $\sigma_\infty \gets \sqrt{\mathbf{v}^\top \mathbf{\Sigma}_\infty \mathbf{v}}$.
\State $C_r \gets \sqrt{2}\,\Gamma\!\left(\frac{r+1}{2}\right)^{1/r}\,\pi^{-1/(2r)}$.

\Statex \textbf{2. Initialization}
\If{$\mathcal{M} \neq \texttt{MonteCarlo}$}
  \State $\mathbf{A}_0 \gets \mathbf{I}$, \qquad $\mathbf{P}_0 \gets \mathbf{0}$.
\Else
  \State Generate initial samples $\mathbf{x}_0^{(i)} = \mathbf{x}_0$, for $i=1,\dots,N$.
\EndIf

\Statex \textbf{3. Time Loop}
\For{$t = 0$ to $T_{\max}$}

  \Statex \hskip1.5em \textbf{A. Propagate State or Samples}
  \If{$\mathcal{M} \neq \texttt{MonteCarlo}$}
      \State $\mu_t \gets \mathbf{v}^\top \mathbf{A}_t \mathbf{x}_0$.
      \State $\sigma_t \gets \sqrt{\mathbf{v}^\top \mathbf{P}_t \mathbf{v}}$.
      \State $\gamma_t \gets (\mathbf{A}_t \mathbf{v})^\top \mathbf{\Sigma}_\infty(\mathbf{A}_t \mathbf{v})$.
      \State $\mathbf{P}_{t+1} \gets \mathbf{P}_t + \mathbf{A}_t \mathbf{\Sigma}\mathbf{A}_t^\top$.
      \State $\mathbf{A}_{t+1} \gets \mathbf{Q}\mathbf{A}_t$.
  \Else
      \State $\mathbf{x}_{t+1}^{(i)} \gets \mathbf{Q}\mathbf{x}_t^{(i)} + \xi^{(i)}$.
      \State Estimate $\hat{\mu}_t$, $\hat{\sigma}_t$ from $\mathbf{v}^\top\mathbf{x}_t^{(i)}$.
      \State $\mu_t \gets \hat{\mu}_t$, \quad $\sigma_t \gets \hat{\sigma}_t$, \quad $\gamma_t \gets \sigma_\infty^2$.
  \EndIf

  \Statex \hskip1.5em \textbf{B. Lower and Upper Bounds}
  \State $L_B^{(t)} \gets |\mu_t|$.
  \State $U_B^{(t)} \gets L_B^{(t)} + (C_r / \sigma_\infty)\, \gamma_t$.

  \Statex \hskip1.5em \textbf{C. Wasserstein Distance}
  \If{$\mathcal{M} = \texttt{ClosedForm}$}
      \State $\mathcal{W}_2^{(t)} \gets \sqrt{\mu_t^2 + (\sigma_t - \sigma_\infty)^2}$.
  \ElsIf{$\mathcal{M} = \texttt{NumInt}$}
      \State $F^{\leftarrow}_{t}(u) \gets \mu_t + \sigma_t\,\Phi^{-1}(u)$.
      \State $F^{\leftarrow}_{\infty}(u) \gets \sigma_\infty\,\Phi^{-1}(u)$.
      \State $I_r \gets \int_0^1 |F^{\leftarrow}_{t}(u) - |F^{\leftarrow}_{\infty}(u)|^r\,du$.
      \State $\mathcal{W}_r^{(t)} \gets I_r^{1/r}$.
  \Else
      \State Sort projections $X_{(1)} \leq \dots \leq X_{(N)}$ from $\mathbf{v}^\top \mathbf{x}_t^{(i)}$.
      \State $Y_{(k)} \gets \sigma_\infty \,\Phi^{-1}\!\left(\frac{k-0.5}{N}\right)$.
      \State $\mathcal{W}_r^{(t)} \gets \left(\frac{1}{N}\sum_{k=1}^N |X_{(k)} - Y_{(k)}|^r\right)^{1/r}$.
  \EndIf

\EndFor

\State \Return $L_B^{(t)},\, U_B^{(t)},\, \mathcal{W}_r^{(t)}$.

\end{algorithmic}
\end{algorithm}

\bigskip 
\section{\textbf{The rate of convergence in Example~\ref{ex:Bernoulli}}}\label{a:Bernoulli}
For $d=1$ consider the i.i.d. symmetric Rademacher $(\xi_t)_{t\in \mathbb{N}}$, $\xi_t \stackrel{\mathsf{d}}{=} \frac{1}{2}\delta_{-1} + \frac{1}{2}\delta_1$ and the respective autoregressive model 
\[
X_{t}(x) = \frac{1}{2}X_{t-1} + \frac{1}{2}\xi_t, \qquad X_0(x) = x. 
\]

\noindent The  affine interpolation condition given in~\eqref{e:affine-ergodi-interpolation} requires that the 
marginal distribution at time $t$ be a deterministic affine transformation of the limiting distribution $X_{\infty} \stackrel{\mathsf{d}}{=} U([-1,1])$. While this 
assumption is satisfied for Gaussian or $\alpha$-stable distributions, it trivially fails in the Bernoulli setting due to 
the topological incompatibility between the discrete marginals and the uniform distribution.

In the sequel, we compute $\mathcal{W}_2(X_t(x), X_\infty)$. Recursively, we obtain
\[ 
X_{t} \stackrel{\mathsf{d}}{=} \left(\frac{1}{2}\right)^{t} x + \frac{1}{2}\sum_{k=0}^{t-1} \left(\frac{1}{2}\right)^k \xi_{k},
\]
with $X_0 = x$. Hence, the stationary solution, denoted by $X_\infty$ can be written as follows
\[
    X_\infty \stackrel{\mathsf{d}}{=} \sum_{k=0}^{\infty} \left(\frac{1}{2}\right)^{k+1} \xi_k.
\]
Classical results by Erd\H{o}s \cite{erdos1939} and 
Jessen and Wintner \cite{jessen1935} establish this convergence, distinguishing the case the contraction rate $q=1/2$ from general contractions $|q|<1$.

\medskip

Recall that
\[
\mathcal{W}_2^2(X_t, X_\infty) = \int_0^1 |F_t^{\leftarrow}(u) - F^{\leftarrow}_\infty(u)|^2 \, \mathrm{d} u,
\]
where $F^{\leftarrow}_t$ and $F^{\leftarrow}_\infty$ are the quantiles of the law of $X_t$ and $X_\infty$, respectively.

\medskip

Note that
\begin{equation} \label{eq:split}
    X_\infty \stackrel{\mathsf{d}}{=}  \underbrace{\sum_{k=0}^{t-1} \left(\frac{1}{2}\right)^{k+1} \xi_k}_{=:S_t} + \underbrace{\sum_{k=t}^{\infty} \left(\frac{1}{2}\right)^{k+1} \xi_k}_{=:R_t}.
\end{equation}
Let $N_t = 2^t$ and observe that
\begin{align*}
    S_t & = \sum_{j=0}^{t-1} \left(\frac{1}{2}\right)^{j+1} \xi_j 
          = \frac{1}{N_t} \underbrace{\sum_{j=0}^{t-1} 2^{t-1-j} \xi_j}_{=:Y_t}.
\end{align*}
Since for each $j$, $\xi_j \stackrel{\mathsf{d}}{=} 2\beta_j - 1$, where $\beta_j$ are Bernoulli random variables with parameter $1/2$, 
we have
\[
Y_t\stackrel{\mathsf{d}}{=} \sum_{j=0}^{t-1} (2\beta_j - 1) 2^j = 2K_t - (N_t -1),
\]
where $K_t = \sum_{j=0}^{t-1} \beta_j 2^j$ which takes values in $\{0, \dots, N_t-1\}$. Consequently, $Y_t$ takes values in the set of 
all (odd) integers  in $[-(N_t-1),N_t-1]$. 
\noindent The cumulative distribution function $F_{S_t}$ is a step function reflecting the uniform probability mass $1/N_t$ at each support point 
\begin{equation}
s_k = \frac{2k+1}{N_t} - 1 \quad \text{for } k = 0, \dots, N_t-1,
\end{equation}
which represents a finite-time approximation of the uniform limit with cumulative distribution function $F_\infty(x) = \frac{x+1}{2}$ on $[-1,1]$.   

In the sequel, we compute   function $F_t^{\leftarrow}(u) = \inf \{ x\in \mathbb{R} : F_t(x) \geq u \}$, $u\in (0,1)$. Note that  $F_t^{\leftarrow}$ is piecewise constant on the intervals $I_k:= ( \frac{k}{N_t}, \frac{k+1}{N_t} ]$. For any $u \in I_k$, the inverse maps to the $k$-th mass point $s_k$ is shifted by $q_t:= 2^{-t}x$. Using the explicit formula for $s_k$, we have
\begin{equation}
    F_t^{\leftarrow}(u) = q_t + s_k = q_t + \left( \frac{2k+1}{N_t} - 1 \right) \quad \text{ for }\quad u \in I_k,
\end{equation}
which gives
\begin{align*}
    \mathcal{W}_2^2(X_t,X_\infty) 
        & = \sum_{k=0}^{N_t-1} \int_{k/N_t}^{(k+1)/N_t} \left(q_t + s_k - (2u - 1) \right)^2 \mathrm{d} u.
\end{align*}
By the change of variable $u = \frac{k}{N_t} + \frac{1}{2N_t} + v$ which centers the integral on 
$v \in [-\frac{1}{2N_t}, \frac{1}{2N_t}]$, we have
\begin{align*}
    \mathcal{W}_2^2(X_t,X_\infty)
        & = \sum_{k=0}^{N_t-1} \int_{-\frac{1}{2N_t}}^{\frac{1}{2N_t}} (q_t - 2v)^2\, \mathrm{d} v
          = \sum_{k=0}^{N_t-1} \int_{-\frac{1}{2N_t}}^{\frac{1}{2N_t}} (q_t^2 - 4q_tv + 4v^2)\, \mathrm{d} v\\
        & = \sum_{k=0}^{N_t-1} \left( \frac{q_t^2}{N_t} + \frac{1}{3N^3_t} \right) 
         = q_t^2 + \frac{1}{3N^2_t}.
\end{align*}
Recalling that $q_t = x/2^t$ and $N_t = 2^t$, we obtain
\begin{equation}
    \mathcal{W}_2^2(X_t(x), X_\infty) = \frac{1}{2^{2t}} \left( x^2 + \frac{1}{3} \right).
\end{equation}

\bigskip 
\section*{\textbf{Declarations}}

\noindent
\textbf{Acknowledgments:} Gerardo Barrera would like to express his gratitude to the Center for Mathematical Analysis, Geometry and Dynamical Systems CAMGSD and Instituto Superior T\'ecnico (IST) Lisbon 
for all the facilities used along the realization of this work.
Michael A. H\"ogele express his gratitute to the CAMGSD at IST Lisbon for the kind hospitality during a secondment of LiBERA project in June-July 2025. 
P.H. da Costa acknowledges the Department of Mathematics at the University of Brasília (UnB) for the infrastructure provided during the preparation of this work.

\noindent
\textbf{Availability of data and material:} 
The results generated in the numerical experiment are available upon request.
\hfill

\noindent
\textbf{Conflict of interests:} The authors declare that they have no conflict of interest.
\hfill

\noindent
\textbf{Authors' contributions:} All authors have contributed equally to the paper.

\noindent
\textbf{Ethical approval:} Not applicable.

\noindent
\textbf{Funding:} The research of Michael A. H\"ogele has been supported by the project ``Mean deviation frequencies and the cutoff phenomenon'' (INV-2023-162-2850) of the School of Sciences (Facultad de Ciencias) at Universidad de los Andes, Bogot\'a, Colombia.
The research of Gerardo Barrera and Michael A. H\"ogele is partially supported by European Union’s Horizon Europe research and innovation programme under the Marie Sk\l{}odowska-Curie Actions Staff Exchanges (Grant agreement No.~101183168 -- LiBERA, Call: HORIZON-MSCA-2023-SE-01). Also, the research of Gerardo Barrera is partially funded by Funda\c{c}\~ao para a Ci\^encia e Tecnologia (FCT), Portugal, through grant FCT/Portugal  project no. UID/04459/2025 with DOI identifier 
10-54499/UID/04459/2025.

\noindent 
\textbf{Disclaimer:} Funded by the European Union. Views and opinions expressed are however those of the author(s) only and do not necessarily reflect those of the European Union or the European Education and Culture Executive Agency (EACEA). Neither the European Union nor EACEA can be held responsible for them.


\begin{thebibliography}{40}
\addcontentsline{}{}{}

%A
\bibitem{Ashbook}
Ash,~R.:
\newblock \textit{Probability and measure theory}.
\newblock Second edition. With contributions by Catherine Dol\'eans-Dade Harcourt/Academic Press, Burlington, MA, (2000). 
\MR{1810041}

%B
\bibitem{barreras}
Barrera,~G., Barrera,~W. \& Navarrete,~J.P.:
\newblock The stability region for Schur stable trinomials with general complex coefficients.
\newblock \textit{J. Dynam. Differential Equations} \textbf{37}, no. 2, (2025), 1807--1833.
\MR{4908158}

\bibitem{BarreraHoegele}
Barrera,~G. \& H\"ogele,~M.A.: 
\newblock Ergodicity bounds for stable Ornstein--Uhlenbeck systems in Wasserstein distance with applications to cutoff stability.
\newblock \textit{Chaos} \textbf{33}, no. 11, (2023), paper no. 113124, 19 pp.
\MR{4667978}

\bibitem{BarreraHoegelePardoPavlyukevich}
Barrera,~G., H\"ogele,~M.A., Pardo,~J.C. \& 
Pavlyukevich,~I.: 
\newblock Cutoff ergodicity bounds in Wasserstein distance for a viscous energy shell model with L\'evy noise. 
\newblock \textit{J. Stat. Phys.} \textbf{191}, no. 105,  (2024), paper no. 105, 24 pp.
\MR{4791608}

\bibitem{Bartels1972}
Bartels,~R. \& Stewart,~G.:
\newblock Algorithm 432: Solution of the matrix equation $AX + XB = C$.
\newblock \textit{Communications of the ACM} \textbf{15}, no. 9, (1972), 820--826.
\url{https://doi.org/10.1145/361573.361582}

\bibitem{Bernsteinbook}
Bernstein,~D.:
\newblock \textit{Matrix mathematics. Theory, facts, and formulas.} 
\newblock Second edition. Princeton University Press, Princeton, NJ, (2009). 
\MR{2513751}

\bibitem{Bhatia}
Bhatia,~R., Jain,~T. \& Lim,~Y.:
\newblock On the Bures--Wasserstein distance between positive definite matrices.
\newblock \textit{Expo. Math.} \textbf{37}, no. 2, (2019), 165--191.
\MR{3992484}

\bibitem{BDC25}
Bonet,~C., Drumetz,~L. \& Courty,~N: 
\newblock Sliced-Wasserstein distances and flows on Cartan--Hadamard manifolds. 
\newblock \textit{J. Mach. Learn. Res.} \textbf{26}, (2025), paper no. 32, 76 pp.
\MR{4873583}

\bibitem{Borweinbook} 
Borwein,~P. \& Erd\'elyi,~T.:
\newblock \textit{Polynomials and polynomial inequalities}.
\newblock Graduate Texts in Mathematics \textbf{161}. Springer-Verlag, New York, (1995).
\MR{1367960}

\bibitem{box2015}
Box,~G., Jenkins,~G., Reinsel,~G. \& Ljung,~G.:
\newblock \textit{Time series analysis: forecasting and control}.
\newblock Fifth edition.
Wiley Ser. Probab. Stat.
John Wiley \& Sons, Inc., Hoboken, NJ, (2016).
\MR{3379415}

\bibitem{BroDav91} 
Brockwell,~P. \& Davis,~R.:
\newblock \textit{Time series: theory and methods}.
\newblock Reprint of the second (1991) edition.
Springer Ser. Statist.
Springer. New York, (2006).
\MR{2839251}

%C
\bibitem{caswell2001}
Caswell,~H.:
\newblock \textit{Matrix population models.
Construction, analysis, and interpretation}.
\newblock Second edition. SinauerAssoc., Inc., Publishers
Sunderland, MA, (2001).
       
\bibitem{Chafai}
Chafa{\"i},~D., Fathi,~M. \& Simonov,~N.: 
\newblock On the cutoff phenomenon for fast diffusion and porous medium equations.
\newblock \url{https://arxiv.org/pdf/2503.11770.pdf}

\bibitem{Cermak2015}
\v{C}erm\'ak,~J. \&  J\'ansk\'y,~J.:
\newblock Explicit stability conditions for a linear trinomial delay difference equation.
\newblock \textit{Appl. Math. Lett.} \textbf{43}, (2015), 56--60.
\MR{3305629}

\bibitem{Cermak2019}
\v{C}erm\'ak,~J., J\'ansk\'y,~J. \& Nechv\'atal,~L.:
\newblock Exact versus discretized stability regions for a linear delay differential equation.
\newblock\textit{Appl. Math. Comput.} \textbf{347}, (2019), 712--722.
\MR{3881712}

\bibitem{CS24}
Cozzi,~G. \& Santambrogio,~F.:
\newblock Long-time asymptotics of the sliced-Wasserstein flow. 
\newblock \textit{SIAM J. Imaging Sci.} \textbf{18}, no. 1, (2025), 1--19.
\MR{4846206}

%E
\bibitem{erdos1939}
Erd\H{o}s,~P.:
\newblock On a family of symmetric Bernoulli convolutions.
\newblock \textit{Amer. J. Math.} \textbf{61}, no. 4, (1939), 974--976.
\MR{0000311}

%F

%G 
\bibitem{Gelbrich} 
Gelbrich,~M.: 
\newblock On a formula for the 
$L^2$ Wasserstein metric between measures on Euclidean and Hilbert Spaces.
\newblock \textit{Math. Nachr.} \textbf{25}, (1990), 185--203.
\MR{1127323}

\bibitem{GivensShortt} 
Givens,~C. \& Shortt,~R.:
\newblock A class of Wasserstein metrics for probability distributions.
\newblock \textit{Michigan Math. J.} \textbf{31}, no. 2, (1984), 231--240.
\MR{0752258}

%H
\bibitem{halko2011}
Halko,~N., Martinsson,~P. \& Tropp,~J.:
\newblock Finding structure with randomness: probabilistic algorithms for constructing approximate matrix decompositions.
\newblock \textit{SIAM Rev.} \textbf{53}, no. 2, (2011), 217--288.
\MR{2806637}

\bibitem{hamilton1994}
Hamilton,~J.:
\newblock \textit{Time series analysis}.
\newblock Princeton University Press, Princeton, NJ, (1994).
\MR{1278033}

\bibitem{HornJohnson}
Horn,~R. \& Johnson,~C.:
\newblock \textit{Matrix analysis}. 
\newblock Second edition.
Cambridge University Press, Cambridge, (2013).
\MR{2978290}

%I

%J
\bibitem{jessen1935}
Jessen,~B. \& Wintner,~A.:
\newblock Distribution functions and the Riemann zeta function.
\newblock \textit{Trans. Amer. Math. Soc.} \textbf{38}, no. 1, (1935), 48--88.
\MR{1501802}


%K
\bibitem{Kelbert}
Kelbert,~M.Y. \& Suhov, Y.: 
\newblock Wasserstein and weighted metrics for multidimensional Gaussian distributions. 
\newblock \textit{Izv. Sarat. Univ. (N.S.) Ser. Mat. Mekh. Inform.} \textbf{23}, no. 4, (2023), 422--434.
\MR{4674289}

\bibitem{kolda2009}
Kolda,~T. \& Bader,~B.:
\newblock Tensor decompositions and applications.
\newblock \textit{SIAM Rev.} \textbf{51}, no. 3, (2009), 455--500,
\MR{2535056}

\bibitem{KyprianouPardo}
Kyprianou,~A. \& Pardo,~J.C.: 
\newblock Stable L\'evy processes via Lamperti-type representations. 
\newblock Cambridge University
Press, Cambridge, (2022).
\MR{4692990}

%L
\bibitem{lee1992}
Lee,~R. \& Carter,~L.:
\newblock Modeling and forecasting U.S. mortality.
\newblock \textit{J. Amer. Statist. Assoc.} \textbf{87}, no.419, (1992), 659--671.
       

%M 
\bibitem{Mardenbook} 
Marden,~M.:
\newblock \textit{Geometry of polynomials}.
\newblock Amer. Math. Soc.,
Mathematical Surveys and Monographs
\textbf{3}, (1949), 1--243.
\MR{0225972}

\bibitem{Mariucci18} 
Mariucci,~E. \& Rei\ss,~M.: 
\newblock Wasserstein and total variation distance between marginals of L\'evy processes. 
\newblock \textit{Electron. J. Stat.} \textbf{12}, no. 2, (2018), 2482--2514.
\MR{3833470}

%N

%P
\bibitem{Panaretos}
Panaretos,~V. \& Zemel,~Y.:
\newblock \textit{An invitation to statistics in Wasserstein space}.
\newblock SpringerBriefs in Probability and Mathematical Statistics, (2020).
\MR{4350694}

\bibitem{PanaretosZemel}
Panaretos,~V. \& Zemel,~Y.:
\newblock Statistical aspects of Wasserstein distances.  
\newblock \textit{Annu. Rev. Stat. Appl.} \textbf{6}, (2019), 405--431.
\MR{3939527}

\bibitem{Prasolovbook} 
Prasolov, V.:
\newblock \textit{Roots of polynomials}. 
\newblock
In: Polynomials.
 Algorithms and Computation in Mathematics
\textbf{11}. Springer-Verlag, Berlin, (2004).
\MR{2082772}

\bibitem{proctor2016}
Proctor,~J., Brunton,~S. \& Kutz,~J.:
\newblock Dynamic Mode Decomposition with Control.
\newblock \textit{SIAM J. Appl. Dyn. Syst.} \textbf{15}, no. 1, (2016), 142--161.
\MR{3452244}  

%Q
\bibitem{Quarteroni2007}
Quarteroni,~A., Sacco,~R. \& Saleri,~F.:
\newblock \textit{Numerical mathematics}.
\newblock Second edition.
Texts Appl. Math. \textbf{37}.
Springer-Verlag, Berlin, (2007).
\MR{2265914}

%R
\bibitem{RPC2010}
Rabin,~J., Peyr{\'e},~G. \& Cohen,~L.: 
\newblock Geodesic shape retrieval via optimal mass transport. 
\newblock In: Daniilidis, K., Maragos, P., Paragios, N. (eds) Computer Vision – ECCV 2010. ECCV 2010. Lecture Notes in Computer Science, vol \textbf{6315}. Springer, Berlin, Heidelberg. 
\url{https://doi.org/10.1007/978-3-642-15555-0_56}

\bibitem{RahmanSchmeisser} 
Rahman, Q. \& Schmeisser, G.:
\newblock \textit{Analytic theory of polynomials}. 
\newblock London Mathematical Society Monographs. New Series \textbf{26}. The Clarendon Press, Oxford University Press, Oxford, (2002).
\MR{1954841}

%S
\bibitem{Santambrogio2015}
Santambrogio,~F.:
\newblock \textit{Optimal transport for applied mathematicians. Calculus of variations, PDEs, and modeling}.
\newblock Progr. Nonlinear Differential Equations Appl. \textbf{87}. Birkh\"auser--Springer, Cham, (2015).
\MR{3409718}

\bibitem{Simoncini2016}
Simoncini,~V.:
\newblock Computational methods for linear matrix equations.
\newblock \textit{SIAM Rev.} \textbf{58}, no. 3, (2016), 377--441.
\MR{3532794}

\bibitem{sims1980}
Sims,~C.:
\newblock Macroeconomics and reality.
\newblock \textit{Econometrica} \textbf{48}, no. 1, (1980), 1--48. 

\bibitem{Stelzer} 
Stelzer,~R.: 
\newblock On Markov-switching ARMA processes--stationarity, existence of moments, and geometric ergodicity.
\newblock \textit{Econometric Theory} \textbf{25}, no. 1, (2009), 
43--62.
\MR{0587507}

%T
\bibitem{tu14}
Tu,~J., Rowley,~C., Luchtenburg,~D., Brunton,~S. \& 
Kutz,~J.:
\newblock On dynamic mode decomposition: theory and applications.
\newblock \textit{J. Comput. Dyn.} \textbf{1}, no. 2, (2014), 391--421.
\MR{3415261}

%U
\bibitem{uria2016}
Uria,~B., Côté,~M., Gregor,~K., Murray,~I. \& 
Larochelle,~H.:
\newblock Neural autoregressive distribution estimation.
\newblock \textit{J. Mach. Learn. Res.}  \textbf{17}, no. 4, (2016), paper No. 205, 37 pp.
\MR{3580358}


%V
\bibitem{HemmenAndo} 
van~Hemmen,~J. \& Ando,~T.: 
\newblock An inequality for trace ideals.
\newblock \textit{Comm. Math. Phys.} \textbf{76}, no. 2, (1980), 143--148.
\MR{0587507}

\bibitem{Villani} 
Villani,~C.: 
\newblock \textit{Optimal transport. Old and new}.  
\newblock Grundlehren der mathematischen Wissenschaften \textbf{338}. Springer-Verlag, Berlin, (2009). 
\MR{2459454} 

\bibitem{Villani03}
Villani,~C.:
\newblock \textit{Topics in optimal transportation}.
\newblock Grad. Stud. Math. \textbf{58}.
American Mathematical Society, Providence, RI, (2003). 
\MR{1964483}
 
%W
\end{thebibliography}
\end{document}